\newcommand\testshape{family=\f@family; series=\f@series; shape=\f@shape.}
\def\myemphInternal#1{\if n\f@shape%
\begingroup\itshape #1\endgroup\/%
\else\begingroup\bfseries #1\endgroup%
\fi}
\def\myemph{\futurelet\testchar\MaybeOptArgmyemph}
\def\MaybeOptArgmyemph{\ifx[\testchar \let\next\OptArgmyemph
                 \else \let\next\NoOptArgmyemph \fi \next}
\def\OptArgmyemph[#1]#2{\index{#1}\myemphInternal{#2}}
\def\NoOptArgmyemph#1{\myemphInternal{#1}}
\newtheorem{theorem}[subsection]{Theorem}
\newtheorem{lemma}[subsection]{Lemma}
\newtheorem{proposition}[subsection]{Proposition}
\newtheorem{corollary}[subsection]{Corollary}
\newtheorem{remark}[subsection]{Remark}
\newtheorem{example}[subsection]{Example}
\newtheorem{definition}[subsection]{Definition}
\newtheorem{claim}[subsection]{Claim}
\providecommand\eqref[1]{(\ref{#1})}
\newcommand\CCC{{\mathbb C}}
\newcommand\RRR{{\mathbb R}}
\newcommand\ZZZ{{\mathbb Z}}
\newcommand\PathComp[2]{ {#1}_{#2} }
\newcommand\id{\mathrm{id}}
\newcommand\Int{\mathrm{Int}}
\newcommand\Orbit{\mathcal{O}}
\newcommand\Stab{\mathcal{S}}
\newcommand\Diff{\mathcal{D}}
\newcommand\Cinf{C^{\infty}}
\newcommand\Unity[1]{ \PathComp{#1}{\id} }
\newcommand\StabId{\Unity{\Stab}}
\newcommand\aCircle{S^1}
\newcommand\Psp{P}
\newcommand\tnmanif{\widetilde{\nmanif}} 
\newcommand\mrsfunc{f}
\newcommand\DiffM{\Diff(\Mman)}
\newcommand\difM{h}
\newcommand\singf{\Sigma_{\func}}
\newcommand\Orbf{\Orbit(\mrsfunc)}
\newcommand\Orbff{\Orbit_{\mrsfunc}(\mrsfunc)}
\newcommand\DiffIdM{\Diff_{\id}(\Mman)}
\newcommand\Stabf{\Stab(\mrsfunc)}
\newcommand\StabIdf{\Stab_{\id}(\mrsfunc)}
\newcommand\tdifM{\tilde{\difM}}
\newcommand\bdifM{\bar{\difM}}
\newcommand\nmanif{N}
\newcommand\DiffId{\Diff_{\id}}
\newcommand\torus{\mathbb{T}^2}
\newcommand\prjplane{\RRR\mathrm{P}^2}
\newcommand\crcomp{K}
\newcommand\func{f}
\newcommand\Mman{M}
\newcommand\Func{F}
\newcommand\tr{\mathrm{tr}}
\newcommand\regnbh{R}
\newcommand\cannbh{N}
\newcommand\regnbha[1]{\regnbh_{#1}}
\newcommand\cannbha[1]{\cannbh_{#1}}
\newcommand\cwpart{\Xi}
\newcommand\tcwpart{\widetilde{\cwpart}}
\newcommand\chains{C}
\newcommand\fc{k}
\newcommand\regnbhg{R_{\kgraph}}
\newcommand\cannbhg{N_{\kgraph}}
\newcommand\kgraph{K}
\newcommand\partf{\Delta_{\mrsfunc}}
\newcommand\partfreg{\partf^{\mathrm{reg}}}
\newcommand\partfcr{\partf^{\mathrm{cr}}}
\newcommand\afunc{\alpha}
\newcommand\cwpartg{\cwpart_{\kgraph}}
\newcommand\hdifM{\widehat{\difM}}
\newcommand\manifneg{X}
\newcommand\StabAf{\Stab'(\mrsfunc)}
\newcommand\StabAfneg{\Stab'(\mrsfunc,\manifneg)}
\newcommand\StabAfBi{\Stab'(\mrsfunc|_{B_i},\partial B_i)}
\newcommand\StabIdAfneg{\StabId'(\mrsfunc,\manifneg)}
\newcommand\StabIdAf{\StabId'(\mrsfunc)}
\newcommand\Hdif{H}
\newcommand\Gdif{G}
\newcommand\tHdif{\widetilde{\Hdif}}
\newcommand\dif{h}
\newcommand\gdif{g}
\newcommand\tMman{\widetilde{\Mman}}
\newcommand\Mmanneg{X}
\newcommand\AFld{F}
\newcommand\AFlow{\mathbf{\AFld}}
\newcommand\Cont[1]{\mathcal{C}^{#1}}
\newcommand\Cr[3]{\Cont{#1}(#2,#3)}
\newcommand\Ci[2]{\Cr{\infty}{#1}{#2}}
\newcommand\Wr[1]{\Cont{#1}}
\newcommand\AxBd{{\sf(Bd)}}
\newcommand\AxSPN{{\sf(SPN)}}
\newcommand\AxFibr{{\sf(Fibr)}}
\newcommand\AxAdm{{\sf(SA)}}
\newcommand\AxIsol{{\sf(Isol)}}
\newcommand\ST{\mathsf{S}}
\newcommand\PT{\mathsf{P}}
\newcommand\NT{\mathsf{N}}
\newcommand\Uman{U}
\newcommand\Vman{V}
\newcommand\Wman{W}
\newcommand\orb{o}
\newcommand\HFld{H}
\newcommand\HFlow{\mathbf{\HFld}}
\newcommand\hcannbhg{\widehat{N}_{\kgraph}}
\newcommand\regnbhAll{\mathbf{R}}
\newcommand\regnbhNDn{\mathcal{R}_{<0}}
\newcommand\Kcrcomp{\mathcal{K}}
\begin{document}

\author{Sergiy Maksymenko}
\title
[Functions on surfaces and incompressible subsurfaces]
{Functions on surfaces and \\ incompressible subsurfaces}
\address{Topology dept., Institute of Mathematics of NAS of Ukraine, Te\-re\-shchenkivska st. 3, Kyiv, 01601 Ukraine}
\email{maks@imath.kiev.ua}
\date{08.01.2010}

\keywords{Incompressible surface, diffeomorphisms group, cellular automorphism, homotopy type}
\subjclass[2000]{37C05,57S05,57R45}
\thanks{This research is partially supported by grant of Ministry of Science and Education of Ukraine, No. M/150-2009}

\begin{abstract}
Let $\Mman$ be a smooth connected compact surface, $\Psp$ be either a real line $\RRR$ or a circle $\aCircle$.
Then we have a natural \myemph{right} action of the group $\DiffM$ of diffeomorphisms of $\Mman$ on $\Ci{\Mman}{\Psp}$.
For $\func\in\Ci{\Mman}{\Psp}$ denote respectively by $\Stabf$ and $\Orbf$ its stabilizer and orbit with respect to this action.
Recently, for a large class of smooth maps $\func:\Mman\to\Psp$ the author calculated the homotopy types of the connected components of $\Stabf$ and $\Orbf$.
It turned out that except for few cases the identity component of $\Stabf$ is contractible, $\pi_i\Orbf=\pi_i\Mman$ for $i\geq3$, and $\pi_2\Orbf=0$, while $\pi_1\Orbf$ it only proved to be a finite extension of $\pi_1\DiffIdM\oplus\ZZZ^{l}$ for some $l\geq0$.
In this note it is shown that if $\chi(\Mman)<0$, then $\pi_1\Orbf=G_1\times\cdots\times G_n$, where each $G_i$ is a fundamental group of the restriction of $\func$ to a subsurface $B_i\subset\Mman$ being either a $2$-disk or a cylinder or a M\"obius band.
For the proof of main result incompressible subsurfaces and cellular automorphisms of surfaces are studied.
\end{abstract}

\maketitle

\section{Introduction}
Let $\Mman$ be a smooth compact connected surface and $\Psp$ be either the real line $\RRR$ or the circle $\aCircle$.
Consider the \myemph{right} action of the group $\DiffM$ of diffeomorphisms of $\Mman$ on $\Ci{\Mman}{\Psp}$ defined by 
$$ 
\difM\cdot \func = \func \circ \difM^{-1}
$$ 
for $\difM\in\DiffM$ and $\func\in\Ci{\Mman}{\Psp}$.
For every $\func\in\Ci{\Mman}{\Psp}$ let 
$$\Orbf=\{\func \circ \difM \ | \ \difM \in \DiffM \},$$ 
$$\Stabf=\{\difM \ | \ \func = \func \circ \difM, \ \difM\in\DiffM\}$$
be respectively the orbit and the stabilizer of $\func$ with respect to this action.
We will endow $\DiffM$, $\Stabf$, $\Ci{\Mman}{\Psp}$, and $\Orbf$ with the corresponding topologies $\Wr{\infty}$.
Denote by $\StabIdf$ the identity path component of $\Stabf$ and by $\Orbff$ the path component of $\func$ in $\Orbf$.
In\;\cite{Maks:AGAG:2006} the author calculated the homotopy types of $\StabIdf$ and $\Orbff$ for all Morse maps $\func:\Mman\to\Psp$.

Moreover, in\;\cite{Maks:func-isol-sing} the results of\;\cite{Maks:AGAG:2006} were extended to a large class of maps with (even degenerate) isolated critical points satisfying certain ``non-degeneracy'' conditions.
In fact there were introduced three types of isolated critical points (called $\ST$, $\PT$, and $\NT$) and the following three axioms for $\func$:
\begin{enumerate}
 \item[\AxBd]
\myemph{$\func$ takes constant value at each connected component of $\partial\Mman$ and $\singf\subset\Int{\Mman}$.}
 \item[\AxSPN]
\myemph{Every critical point of $\func$ is either an $\ST$- or a $\PT$- or an $\NT$-point.}
 \item[\AxFibr]
\myemph{The natural map $p:\DiffM \to \Orbf$ defined by $p(\dif) = \func\circ \dif^{-1}$ 
is a Serre fibration with fiber $\Stabf$ in topologies $\Wr{\infty}$.}
\end{enumerate}

Recall that if $\func:(\CCC,0)\to(\RRR,0)$ is a smooth germ for which $0\in\CCC$ is an \emph{isolated} critical point, then there exists a \emph{homeomorphism} $\difM:\CCC\to\CCC$ such that $\difM(0)=0$ and
$$
\func\circ\difM(z) = 
\left\{
\begin{array}{ll}
\pm|z|^2, & \text{if $z$ is a \myemph{local extremum}, \cite{Dancer:2:JRAM:1987}},  \\
\text{Re}(z^n), (n\geq1) & \text{otherwise, so $z$ is a \myemph{saddle}, \cite{Prishlyak:TA:2002}},
\end{array}
\right.
$$
Examples of the foliation by level sets of $\func$ near $0$ are presented in Figure\;\ref{fig:isol_crit_pt}.
\begin{figure}[ht]
\begin{tabular}{ccccccc}
\includegraphics[width=0.18\textwidth]{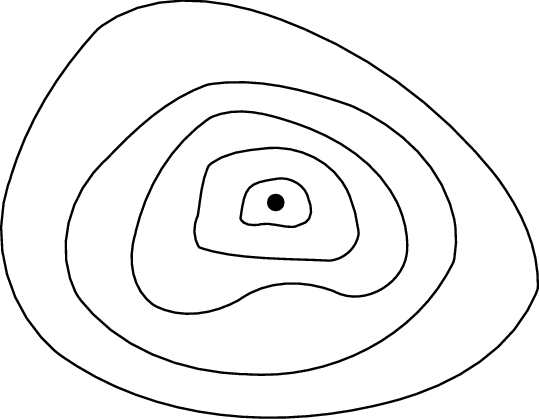}  & 
\quad 
\includegraphics[width=0.18\textwidth]{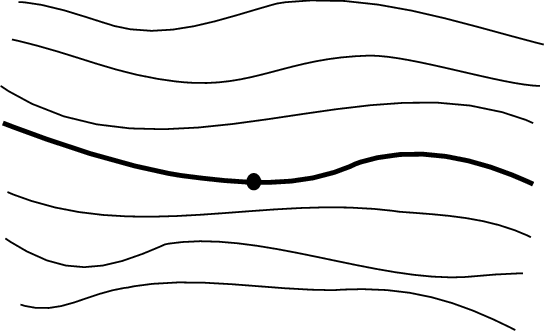}  & 
\quad 
\includegraphics[width=0.18\textwidth]{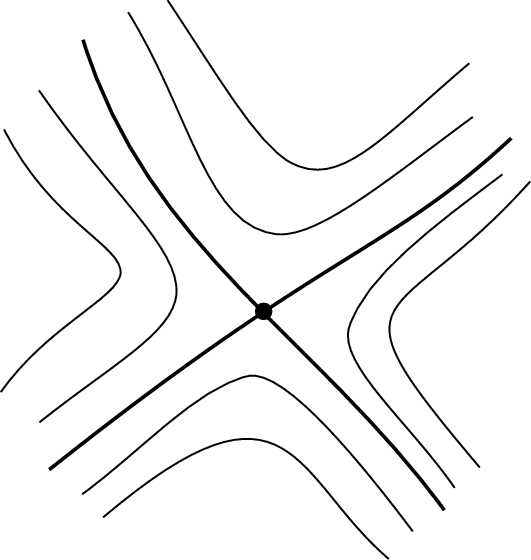}  & 
\quad 
\includegraphics[width=0.18\textwidth]{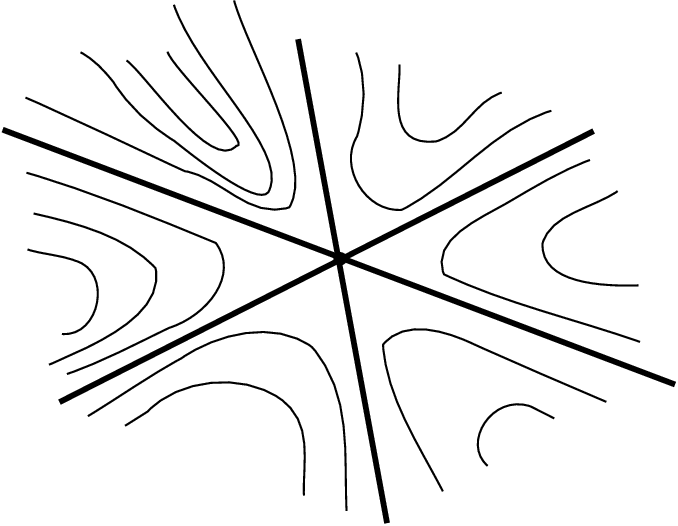} 
\end{tabular}
\caption{Isolated critical points}\label{fig:isol_crit_pt}
\end{figure}

From this point of view $\ST$-points are saddles, while $\PT$- and $\NT$-points a local extremes.
Moreover, $\PT$-points admit non-trivial $\func$-preserving circle actions (as non-degenerate local extremes do), while $\NT$-points admit only $\ZZZ_n$-action preserving $\func$.
We will not give precise definitions but recall a large class of examples of such points.
\begin{example}\label{exmp:SPN-points}{\rm\cite{Maks:AGAG:2006}.}
Let $\func:\RRR^2\to\RRR$ be a homogeneous polynomial without multiple factors with $\deg\func\geq2$, so
$$
\func = L_1\cdots L_a \cdot Q_1 \cdots Q_b, \qquad a+2b\geq2,
$$
where every $L_i$ is a linear function and every $Q_j$ is an irreducible over $\RRR$ (i.e. definite) quadratic form such that $L_i/L_{i'}\not=\mathrm{const}$ for $i\not=i'$ and $Q_j/Q_{j'}\not=\mathrm{const}$ for $j\not=j'$.

If $a\geq1$, so $\func$ has linear factors and thus $0$ is a saddle, then the origin $0\in\RRR^2$ is an $\ST$-point for $\func$.

If $a=0$ and $b=1$, so $\func=Q_1$, then the origin $0\in\RRR^2$ is a $\PT$-point for $\func$.

Otherwise, $a=0$ and $b\geq2$, so $\func=Q_1\cdots Q_b$.
Then the origin $0\in\RRR^2$ is an $\NT$-point for $\func$.
\end{example}
\begin{lemma}{\rm\cite{Maks:AGAG:2006}.}
Let $\func:\Mman\to\Psp$ be a $\Cinf$ map satisfying \AxBd, and such that every of its critical points belongs to the class described in Example\;\ref{exmp:SPN-points}, in particular, $\func$ also satisfies \AxSPN.
Then $\func$ also satisfies \AxFibr.
\end{lemma}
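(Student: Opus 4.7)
The goal is to verify that $p:\DiffM\to\Orbf$ is a Serre fibration in the $\Wr{\infty}$-topology. A standard reduction, using right-translation by elements of $\DiffM$ (which intertwines $p$ with itself up to the homeomorphism $\dif'\mapsto\dif'\circ\difM$ of $\DiffM$), shows that it suffices to construct, for some $\Wr{\infty}$-neighborhood $\nba\subset\Orbf$ of $\func$, a continuous section $s:\nba\to\DiffM$ with $s(\func)=\id$ and $\func'\circ s(\func')=\func$ for all $\func'\in\nba$. Once such an $s$ is in hand, the map $(\func',\hel)\mapsto s(\func')\circ\hel$ from $\nba\times\Stabf$ to $p^{-1}(\nba)$ provides a local trivialization of $p$, from which the Serre fibration property follows at once.

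To build $s$, I would work separately on a neighborhood of the (finite) critical set $\singf$ and on the complementary regular region, and then glue. Near each $\pnt\in\singf$, choose local coordinates in which $\func$ equals, up to an additive constant, a homogeneous polynomial $L_1\cdots L_a\cdot Q_1\cdots Q_b$ as in Example~\ref{exmp:SPN-points}. The classification of such germs into $\ST$-, $\PT$-, and $\NT$-types from \cite{Maks:func-isol-sing} is accompanied by explicit local $\Cinf$ group actions — of $\aCircle$ in the $\PT$-case and of $\ZZZ_n$ in the $\NT$-case — preserving $\func$, together with a rigidity statement asserting that nearby germs in $\Orbf$ lie in a single orbit under composition with these actions. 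From this one extracts, continuously in $\func'$, a local diffeomorphism $\dif^{\pnt}_{\func'}$ supported in a small neighborhood of $\pnt$ with $\func'\circ\dif^{\pnt}_{\func'}=\func$ near $\pnt$.

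On the complement of a slightly smaller neighborhood of $\singf$, the map $\func$ is a submersion. Pick a smooth vector field $\Fld$ there, transverse to the level sets of $\func$ and normalized by $\Fld(\func)\equiv1$; for $\func'$ sufficiently close to $\func$, $\Fld$ remains transverse to level sets of $\func'$, so that the time-$(\func(\xel)-\func'(\xel))$ flow of $\Fld$ through each $\xel$ is well defined and produces a diffeomorphism $\dif^{\circ}_{\func'}$ of the regular region satisfying $\func'\circ\dif^{\circ}_{\func'}=\func$. Using a fixed partition of unity subordinate to the cover, the local pieces $\dif^{\pnt}_{\func'}$ and $\dif^{\circ}_{\func'}$ can be interpolated — most cleanly at the level of their integrating vector fields, which lie in $\VMf$ there — into a single global $s(\func')\in\DiffM$ depending continuously on $\func'$.

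The main obstacle is the parametric local conjugacy at degenerate critical points: classical theorems of Dancer and Prishlyak supply a topological conjugacy of each germ to its homogeneous normal form, whereas what is needed here is a \emph{smooth} conjugacy depending continuously on the perturbation $\func'$. This is precisely where axiom \AxSPN is decisive: the $\aCircle$-symmetry at $\PT$-points and the $\ZZZ_n$-symmetry at $\NT$-points furnish averaging operators that promote topological to smooth equivalences in a $\func'$-continuous way, while at $\ST$-saddles one adapts the gradient-like flow argument from the Morse case \cite{Maks:AGAG:2006}. The remaining work — smoothness of the interpolation across the overlap region, and verification that $s$ is $\Wr{\infty}$-continuous — is routine but notationally heavy.
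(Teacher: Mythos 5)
Your overall architecture --- reduce by right-translation to the construction of a continuous local section $s$ of $p$ at $\func$, build $s$ separately near $\singf$ and on the regular part, and glue --- is exactly the strategy of the cited reference, so the skeleton is right. The problem is that both load-bearing steps are asserted rather than proved, and the mechanisms you invoke for them would not work as stated. First, the parametric smooth local section near a degenerate critical point \emph{is} the content of the lemma: you need, for every $\func'$ in a $\Wr{\infty}$-neighbourhood of $\func$ inside $\Orbf$, a germ of diffeomorphism $\dif^{\pnt}_{\func'}$ with $\func'\circ\dif^{\pnt}_{\func'}=\func$ near $\pnt$, depending continuously on $\func'$. The topological normal forms of Dancer and Prishlyak are of no help here (they are neither smooth nor parametric), and averaging over the $\aCircle$- or $\ZZZ_n$-symmetry cannot upgrade a topological conjugacy to a smooth one, nor does it produce the required continuity in $\func'$. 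What is actually used in \cite{Maks:AGAG:2006} is the algebraic structure of the homogeneous polynomial $L_1\cdots L_a\,Q_1\cdots Q_b$ without multiple factors (a division/Hadamard-type argument along its level curves); without reproving that, your ``rigidity statement'' amounts to citing precisely the result you are trying to prove.

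Second, the gluing step cannot be a partition-of-unity interpolation of vector fields. The local solutions $\dif^{\pnt}_{\func'}$ and $\dif^{\circ}_{\func'}$ are diffeomorphisms, not time-one maps of vector fields in $\VMf$, and interpolating them naively destroys the identity $\func'\circ\dif=\func$. The correct mechanism, which the present paper itself uses in Propositions~\ref{prop:deform-in-Stabf} and~\ref{prop:deform-homotopies-in-Stabf}, is to observe that on an overlap the discrepancy $(\dif^{\circ}_{\func'})^{-1}\circ\dif^{\pnt}_{\func'}$ preserves the leaves of $\partf$, hence by the $\ST$-property (Definition~\ref{defn:admissible_cr_pt} plus continuity of $\dif\mapsto\sigma$) can be written as a shift $x\mapsto\HFlow(x,\sigma(x))$ along a flow tangent to the leaves, with $\sigma$ a $\Cinf$ function depending continuously on $\func'$; one then damps $\sigma$ by a bump function constant along orbits. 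It is exactly here that the continuity of the correspondence $\dif\mapsto\sigma$ --- the feature distinguishing $\ST$-points from merely admissible saddles --- is indispensable, and your write-up never engages with it.
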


It follows from Morse lemma and Example\;\ref{exmp:SPN-points} that non-degenerate saddles are $\ST$-points while non-degenerate local extremes are $\PT$-points.

Now the main result of \cite{Maks:func-isol-sing} can be formulated as follows.
\begin{theorem}\label{th:StabIdf_Orbff_A123}{\rm\cite{Maks:AGAG:2006, Maks:func-isol-sing}.}
Suppose $\func:\Mman\to\Psp$ satisfies \AxBd\ and \AxSPN.
If $\func$ has at least one $\ST$- or $\NT$-point, or if $\Mman$ is non-orientable, then $\StabIdf$ is contractible.

Moreover, if in addition $\func$ satisfies \AxFibr, then $\pi_i\Orbff=\pi_i\Mman$ for $i\geq3$, $\pi_2\Orbff=0$, and for $\pi_1\Orbf$ we have the following short exact sequence 
$$
1 \to \pi_1\DiffM\oplus\ZZZ^{l} \to \pi_1\Orbff \to G \to 1,
$$
for a certain finite group $G$ and $l\geq0$ both depending on $\func$.
\end{theorem}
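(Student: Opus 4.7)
The plan is to first establish contractibility of $\StabIdf$ by means of a Hamiltonian-type shift map on $\Mman$, and then feed this into the Serre fibration supplied by \AxFibr\ to compute the homotopy groups of $\Orbff$. Concretely, pick a smooth vector field $\Fld$ tangent to the level sets of $\func$, with prescribed local models at each critical point reflecting whether it is of type $\ST$, $\PT$, or $\NT$, and let $\flow$ denote its flow. The associated shift map
\[
\phiel\colon\Cinf(\Mman,\RRR)\longrightarrow\StabIdf,\qquad \phiel(\afunc)(x)=\flow(x,\afunc(x)),
\]
will be surjective, so establishing contractibility of the target reduces to understanding $\ker\phiel$, the space of ``period functions''.

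The heart of the argument is a local analysis at critical points. Near a $\PT$-point, $\func$ is topologically equivalent to a definite quadratic form and admits a nontrivial $\func$-preserving circle action; this forces $\ker\phiel$ a priori to contain a $\ZZZ$-valued period obstruction. By contrast, at an $\ST$- or $\NT$-point the local $\func$-preserving pseudogroup is discrete with no continuous symmetry, and one can show that any element of $\ker\phiel$ must vanish on the whole level-set component through such a critical point. Invoking the hypothesis that $\func$ has at least one $\ST$- or $\NT$-point (or, in the non-orientable case, that $\Mman$ admits no global $\func$-compatible circle action) will rigidify the entire kernel and yield contractibility of $\StabIdf$. The main technical obstacle here is the global propagation: a gluing argument along the edges and vertices of the Kronrod--Reeb graph $\Reebf$ is needed to spread the local rigidity from a single critical point across every component of every level set it meets.

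Once $\StabIdf$ is known to be contractible, $\Stabf$ is homotopy equivalent to the discrete group $\pi_0\Stabf$, so $\pi_i\Stabf=0$ for $i\geq 1$. The long exact sequence of the fibration $\DiffM\to\Orbff$ then collapses to
\[
\pi_i\Orbff\cong\pi_i\DiffM\ (i\geq 2),\qquad 1\to\pi_1\DiffM\to\pi_1\Orbff\to\pi_0\Stabf\to\pi_0\DiffM.
\]
The classical homotopy-type calculations of Earle--Eells, Earle--Schatz and Gramain for $\DiffM$, combined with the long exact sequence of the evaluation fibration $\DiffIdM\to\Mman$, yield $\pi_2\DiffM=0$ and $\pi_i\DiffM=\pi_i\Mman$ for $i\geq 3$ in every surface case (both sides vanish for $\chi(\Mman)<0$ or $\partial\Mman\neq\emptyset$, both equal $\pi_iT^2$ or $\pi_i S^2$ in the remaining cases via the $SO(2)$-bundle $SO(3)\to S^2$). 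This gives the stated identifications $\pi_i\Orbff=\pi_i\Mman$ for $i\geq 3$ and $\pi_2\Orbff=0$.

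Finally, to extract the refined structure of $\pi_1\Orbff$, one studies the action of $\Stabf$ on $\Reebf$. The induced homomorphism $\Stabf\to\AutfR$ has finite image, giving the finite quotient $G$. Its kernel in $\pi_0\Stabf\cap\pi_0\DiffIdM$ is generated by mutually commuting Dehn twists supported in tubular neighbourhoods of regular level-set components realising the independent cycles of $\Reebf$, producing a free abelian subgroup $\ZZZ^{l}$. The most delicate remaining step---and the central difficulty of this last part of the argument---is to verify that these Dehn twist loops lift to $\Orbff$ in such a way that their images commute with and split off $\pi_1\DiffM$ as a direct summand, so that the exact sequence assembles into the form $1\to\pi_1\DiffM\oplus\ZZZ^{l}\to\pi_1\Orbff\to G\to 1$.
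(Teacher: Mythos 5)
You should note at the outset that Theorem~\ref{th:StabIdf_Orbff_A123} is not proved in this paper: it is quoted from \cite{Maks:AGAG:2006} and \cite{Maks:func-isol-sing}, so there is no in-text argument to compare against. Your outline does reproduce the strategy of those references --- contractibility of $\StabIdf$ via the shift map $\phiel(\afunc)(x)=\flow(x,\afunc(x))$ along a flow tangent to the level sets, the long exact sequence of the fibration supplied by \AxFibr\ together with the Earle--Eells, Earle--Schatz and Gramain computations for the $i\geq 2$ part, and the action of $\Stabf$ on the Kronrod--Reeb graph for the structure of $\pi_1\Orbff$. The higher-homotopy part of your argument is complete and correct as written. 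So the roadmap is the right one.

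As a proof, however, the load-bearing steps are named rather than carried out, and two of them are genuine gaps rather than routine verifications. First, the reduction ``contractibility of $\StabIdf$ reduces to understanding $\ker\phiel$'' is not automatic: one needs $\phiel$ to be not merely surjective but a homotopy equivalence onto $\StabIdf$ (a homeomorphism of the convex set $\Cinf(\Mman,\RRR)$, or of its quotient by the period lattice, onto the image). This is exactly where the continuity of the correspondence $\dif\mapsto\sigma$ enters --- the very condition that, as remarked after Definition~\ref{defn:admissible_cr_pt}, distinguishes $\ST$-points from merely admissible saddles --- and it is the hardest analytic step of \cite{Maks:AGAG:2006}; your sketch treats both the surjectivity and this openness as given. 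You also do not explain how the shift-map machinery applies when $\Mman$ is non-orientable, where no global Hamiltonian field exists (the cited proof passes to the orientable double cover, on which the deck involution reverses the flow and thereby kills the period obstruction). Second, in the last part you assert both the finiteness of the image $G$ in $\AutfR$ and, crucially, that the Dehn-twist subgroup $\ZZZ^{l}$ together with $\pi_1\DiffM$ forms a \emph{direct sum} inside $\pi_1\Orbff$; the splitting is precisely the delicate point, and flagging it as ``the most delicate remaining step'' does not discharge it. Indeed, the incompleteness of exactly this part of the description of $\pi_1\Orbff$ is the stated motivation for the present paper (see the discussion following the theorem and Theorems~\ref{th:stab1-repr} and~\ref{th:pi1orb_decomp}).
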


Thus, the information about the fundamental group $\pi_1\Orbff$ is not complete.
The aim of this note is to show that the calculation of $\pi_1\Orbff$ can be reduced to the case when $\Mman$ is either a $2$-disk, or a cylinder, or a M\"obius band, see Theorems\;\ref{th:stab1-repr} and\;\ref{th:pi1orb_decomp} below.
The obtained results hold for a more general class of maps $\Mman\to\Psp$ than the one considered in\;\cite{Maks:func-isol-sing}.

\subsection{Admissible critical points.}
We will now introduce a certain type of critical points for $\func$.
Let $\AFld$ be a vector field on $\Mman$, $\Vman\subset\Mman$ be an open subset, and $\dif:\Vman\to\Mman$ be an embedding.
Say that $\dif$ \myemph{preserves orbits of $\AFld$} if for every orbit $\orb$ of $\AFld$ we have that $\dif(\Vman\cap\orb)\subset\orb$.

\begin{definition}\label{defn:admissible_cr_pt}
Let $\func:\Mman\to\Psp$ be a $\Cinf$ map and $z\in\Int\Mman$ be an isolated critical point of $\func$ which is not a local extreme (so $z$ is a saddle).
Say that $z$ is \myemph{admissible} if there exists a neighbourhood $\Uman$ of $z$ containing no other critical points of $\func$ and a vector field $\AFld$ on $\Uman$ having the following properties:
\begin{enumerate}
 \item[\rm(1)]
$\func$ is constant along orbits of $\AFld$ and $z$ is a unique singular point of $\AFld$.
 \item[\rm(2)]
Let $(\AFlow_{t})$ be the local flow of $\AFld$ on $\Uman$.
Then for every germ of diffeomorphisms $\dif:(\Mman,z)\to(\Mman,z)$ preserving orbits of $\AFld$ there exists a $\Cinf$ germ $\sigma:(\Mman,z)\to\RRR$ such that $\dif(x)=\AFlow(x,\sigma(x))$ near $z$.
\end{enumerate}
\end{definition}

This definition almost coincides with the definition of an $\ST$-point, c.f.\;\cite{Maks:func-isol-sing}.
The difference is that for $\ST$-points it is also required that the correspondence $\dif\mapsto\sigma$ is continuous with respect to topologies $\Cinf$.
In particular every $\ST$-point is admissible.

Now put the following two axioms for $\func$ both implied by \AxSPN:
\begin{itemize}
 \item[\AxIsol]
\myemph{All critical points of $\func$ are isolated.}

\smallskip 

 \item[\AxAdm]
\myemph{Every saddle of $\func$ is admissible.}
\end{itemize}

\subsection{Main result}
Let $\DiffIdM$ be the identity path component of the group $\DiffM$ and 
$$\StabAf=\Stabf\cap\DiffIdM$$
be the stabilizer of $\func$ with respect to the right action of $\DiffIdM$.
Thus $\StabAf$ consists of diffeomorphisms $\dif$ isotopic to $\id_{\Mman}$ and preserving $\Func$, i.e.\!  $\func\circ\dif=\func$.

For a closed subset $\Mmanneg\subset\Mman$ denote by $\StabAfneg$ the subgroup of $\StabAf$ consisting of diffeomorphisms fixed on some neighbourhood of $\Mmanneg$.

The aim of this note is to prove the following theorem:
\begin{theorem}\label{th:stab1-repr}
Suppose $\chi(\Mman)<0$.
Let $\func:\Mman\to\Psp$ be a $\Cinf$ map satisfying the axioms \AxBd, \AxIsol, and \AxAdm.
Then there exists a compact subsurface $\Mmanneg\subset\Mman$ with the following properties:

{\rm(1)}~$\func$ is locally constant on $\partial\Mmanneg$ and every connected component $B$ of $\overline{\Mman\setminus\Mmanneg}$ is either a $2$-disk or a $2$-cylinder or a M\"obius band.
Moreover, $\partial B\subset\Mmanneg$ and $B$ contains critical points of $\func$.

{\rm(2)}~Let $\difM\in\StabAfneg$ and $B$ be a connected component of $\overline{\Mman\setminus\Mmanneg}$, thus $\difM$ is fixed on some neighbourhood of $\partial B$.
Then the restriction $\difM|_{B}$ is isotopic in $B$ to $\id_{B}$ with respect to some neighbourhood of $\partial B$.

{\rm(3)}~The inclusion $i:\StabAfneg\subset\StabAf$ induces a group isomorphism $i_0:\pi_0\StabAfneg\approx\pi_0\StabAf.$
\end{theorem}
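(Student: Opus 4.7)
\medskip

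\noindent\textbf{Plan of proof.}
The plan is to construct $\Mmanneg$ from the Kronrod-Reeb level-set partition $\partitf$ of $\func$ and to verify (1)--(3) by combining the finite combinatorics of this partition with the local control on diffeomorphisms provided by \AxAdm. The construction of $\Mmanneg$ proceeds as follows. By \AxBd\ and \AxIsol\ the singular elements of $\partitf$ (those containing a critical point) form a finite collection, and each admits a compact regular neighbourhood --- an \emph{atom} --- whose boundary consists of regular level circles of $\func$. Outside the union of atoms $\Mman$ is a disjoint union of cylindrical regions foliated by regular levels. Call an atom \emph{inessential} if it is diffeomorphic to a disk, cylinder, or Möbius band, and \emph{essential} otherwise. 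Let $\Mmanneg$ be obtained from $\Mman$ by deleting the interiors of all inessential atoms. Since inessential atoms have non-negative Euler characteristic while $\chi(\Mman)<0$, at least one essential atom must exist, so $\Mmanneg$ is a nonempty proper subsurface. Each connected component $B$ of $\overline{\Mman\setminus\Mmanneg}$ is an inessential atom whose boundary lies in $\Mmanneg$ and which contains the critical points of the corresponding singular level component. This gives (1).

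For (2), let $\difM\in\StabAfneg$ and let $B$ be a complementary piece. Then $\difM|_{B}$ is a diffeomorphism of $B$ equal to the identity near $\partial B$ and preserving $\func|_{B}$. If $B$ is a disk there is nothing to prove, since the mapping class group of a disk rel boundary is trivial. If $B$ is a cylinder or a Möbius band, $\difM|_{B}$ is classified up to isotopy rel $\partial B$ by a single Dehn-twist coefficient, and I would show this coefficient vanishes using \AxAdm: near each saddle inside $B$ the diffeomorphism $\difM$ coincides with the time-$\sigma$ map of the admissible flow $\AFlow$ for some germ $\sigma$. Extending $\sigma$ smoothly to all of $B$ and interpolating it linearly to zero on a collar of $\partial B$ yields an isotopy from $\difM|_{B}$ to $\id_{B}$ rel $\partial B$, performed through orbit-preserving (hence foliation-preserving) diffeomorphisms of $B$.

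For (3) the argument splits into surjectivity and injectivity of $i_0$. For surjectivity, given $\difM\in\StabAf$, I first observe that $\difM$ permutes the finite family of complementary pieces; since $\difM\in\DiffIdM$, this permutation of the Reeb graph is trivial, so each piece $B$ is preserved. On the cylindrical regular-level regions of $\Mmanneg$ the map $\difM$ preserves the circle foliation and acts there as a ``shift'' along the level circles; combining these shifts with the admissible flows near the essential atoms, one constructs an isotopy within $\StabAf$ from $\difM$ to a diffeomorphism fixed on a neighbourhood of $\Mmanneg$, i.e.~to a representative in $\StabAfneg$. Injectivity of $i_0$ is proved along the same lines applied to a one-parameter family of diffeomorphisms. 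The main obstacle will be making the straightening step genuinely $\func$-preserving and globally well-defined across essential atoms: the shift parameters on cylinders adjacent to the same essential atom must agree in a manner compatible with the local normal forms furnished by \AxAdm, and it is precisely here that the incompressible-subsurface and cellular-automorphism machinery announced in the abstract --- which encodes how orbit-preserving local data glue along the $1$-skeleton of $\partitf$ --- is invoked to produce the global isotopy.
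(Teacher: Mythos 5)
Your overall strategy (delete the ``small'' atoms, keep the rest, straighten $f$-preserving diffeomorphisms by shifts along an admissible flow) matches the paper's, but your choice of which atoms to delete is wrong, and it breaks part (3). You declare an atom inessential when the \emph{atom itself} is a disk, cylinder or M\"obius band; the paper instead deletes the atoms whose \emph{canonical neighbourhood} (the atom together with all disk components of its complement) has non-negative Euler characteristic. The difference is not cosmetic. Take $\chi(\Mman)<0$ and a function having, inside an embedded $2$-disk $D$, a symmetric figure-eight critical level with two centers in its lobes. The atom of that level is a pair of pants, hence ``essential'' for you and retained in $\Mmanneg$, but its canonical neighbourhood is $D$ itself. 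The rotation of $D$ by $\pi$ swapping the two lobes, damped to the identity along the circle leaves outside $D$, lies in $\StabAf$ (it is supported in a disk, hence isotopic to $\id_{\Mman}$) yet induces a non-trivial permutation of the leaves of that critical component; since this permutation is a discrete invariant constant along any path in $\Stabf$, such a diffeomorphism can never be isotoped inside $\StabAf$ to one fixed near that atom. So $i_0$ fails to be surjective for your $\Mmanneg$. This same example refutes your assertion that ``since $\difM\in\DiffIdM$, this permutation of the Reeb graph is trivial'': leaf-invariance is exactly the hard point, and the paper obtains it only for atoms with $\chi(\cannbha{\crcomp})<0$, via incompressibility of canonical neighbourhoods (Proposition~\ref{pr:incompr-surf-homotopy}(A)) and the Lefschetz-number argument for cellular automorphisms (Theorem~\ref{th:aut-surf-graph}).

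There is a second gap in your proof of (2) for cylinder components. A Dehn twist on a cylinder foliated by circle leaves \emph{is} an orbit-preserving shift $x\mapsto\AFlow(x,\sigma(x))$ that is the identity near both boundary circles (with $\sigma$ running from $0$ to one full period), yet it is not isotopic to $\id_{B}$ rel $\partial B$; interpolating $\sigma$ linearly to zero destroys the rel-boundary condition. So the vanishing of the twist coefficient cannot be extracted from \AxAdm\ and the local normal form alone: it is a global statement, which the paper proves by showing that $\difM$ is homotopic to $\id_{\Mman}$ relative to the incompressible part of negative Euler characteristic (Proposition~\ref{pr:incompr-surf-homotopy}(D)), and then invoking the fact that a non-trivial product of boundary Dehn twists on a surface with $\chi<0$ is never homotopic to the identity rel boundary (Lemma~\ref{lm:dehn-twists-along-boundary}). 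Both defects are repairable, but only by replacing your notion of essential atom with the canonical-neighbourhood one and by actually running the incompressible-subsurface and cellular-automorphism arguments you defer to at the end of your sketch.
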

The proof of this theorem will be given in\;\S\ref{sect:proof_th:stab1-repr}.
We will now show how to simplify calculations of $\pi_1\Orbf$ using Theorem\;\ref{th:stab1-repr}.

Let $\Mmanneg$ be the surface of Theorem~\ref{th:stab1-repr} and let $B_1,\ldots,B_l$ be all the connected components of $\overline{\Mman\setminus\Mmanneg}$.
For every $i=1,\ldots,l$ denote by $\DiffId(B_i,\partial B_i)$ the group of diffeomorphisms of $B_i$ fixed on some neighbourhood of $\partial B_i$ and isotopic to $\id_{B_i}$ relatively to some neighbourhood of $B_i$.
Let also $\StabAfBi$ be the stabilizer of the restriction $\func|_{B_i}:B_i\to\Psp$ with respect to the right action of $\DiffId(B_i,\partial B_i)$.
Then we have an evident isomorphism of groups:
\begin{equation}\label{equ:StabAfneg-prod}
\psi:\StabAfneg \; \approx \; \mathop\times\limits_{i=1}^{l} \StabAfBi,
\qquad 
\psi(\difM) = (\difM|_{B_1},\ldots, \difM|_{B_l}),
\end{equation}
It is easy to show that $\psi$ is in fact a homeomorphism with respect to the corresponding $\Cinf$ topologies.

\begin{theorem}\label{th:pi1orb_decomp}
Under assumptions of Theorem~\ref{th:stab1-repr} suppose that $\func$ also satisfies \AxFibr.
Then we have an isomorphism:
$$
\pi_1\Orbff \ \approx \ \mathop\times\limits_{i=1}^{l} \pi_0\StabAfBi.
$$
\end{theorem}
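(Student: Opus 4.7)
The plan is to exploit the Serre fibration provided by axiom \AxFibr together with the product decomposition of $\StabAfneg$ coming from Theorem\;\ref{th:stab1-repr}. By \AxFibr, the map $p:\DiffM\to\Orbf$, $p(\difM)=\func\circ\difM^{-1}$, is a Serre fibration with fiber $\Stabf$. Restricting $p$ to the identity path component $\DiffIdM$, whose image under $p$ is precisely $\Orbff$, one obtains a Serre fibration
\[
p:\DiffIdM\longrightarrow\Orbff
\]
with fiber $\StabAf=\Stabf\cap\DiffIdM$. The relevant portion of its long exact homotopy sequence reads
\[
\pi_1\DiffIdM \;\longrightarrow\; \pi_1\Orbff \;\longrightarrow\; \pi_0\StabAf \;\longrightarrow\; \pi_0\DiffIdM = 1.
\]

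The key external input is the classical result that, for any compact connected surface $\Mman$ with $\chi(\Mman)<0$ (including the non-orientable case and the case with non-empty boundary, and \emph{without} requiring diffeomorphisms to fix $\partial\Mman$ pointwise), the identity component $\DiffIdM$ is contractible; this is due to Earle--Eells, Earle--Schatz, Scott, and Gramain. In particular $\pi_1\DiffIdM=1$, so the exact sequence above collapses to a group isomorphism $\pi_1\Orbff \approx \pi_0\StabAf$.

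It then remains to identify $\pi_0\StabAf$ with the product $\prod_{i=1}^{l}\pi_0\StabAfBi$. By Theorem\;\ref{th:stab1-repr}(3), the inclusion $\StabAfneg\hookrightarrow\StabAf$ induces an isomorphism $i_{0}:\pi_0\StabAfneg\approx\pi_0\StabAf$. Combining this with the homeomorphism of topological groups $\psi$ from\;\eqref{equ:StabAfneg-prod} and the standard fact that $\pi_0$ commutes with finite products yields
\[
\pi_0\StabAf \;\approx\; \pi_0\StabAfneg \;\approx\; \prod_{i=1}^{l}\pi_0\StabAfBi.
\]
Composing with the isomorphism $\pi_1\Orbff\approx\pi_0\StabAf$ gives the desired decomposition. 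The main obstacle I anticipate is merely bibliographic: locating and citing a contractibility result for $\DiffIdM$ in the full generality permitted here (compact connected surface, possibly non-orientable, possibly with boundary, with diffeomorphisms free to move $\partial\Mman$). Once that is in hand, the remainder is a clean diagram chase in the long exact sequence together with the input of Theorem\;\ref{th:stab1-repr} and the product decomposition\;\eqref{equ:StabAfneg-prod}.
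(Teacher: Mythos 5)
Your proposal is correct and follows essentially the same route as the paper: restrict the fibration of \AxFibr\ to $\DiffIdM\to\Orbff$, use the vanishing of $\pi_1\DiffIdM$ for $\chi(\Mman)<0$ (the paper cites Earle--Eells, Earle--Schatz and Gramain for exactly this) to get $\pi_1\Orbff\approx\pi_0\StabAf$, and then apply Theorem~\ref{th:stab1-repr}(3) together with the product decomposition~\eqref{equ:StabAfneg-prod}. The only cosmetic difference is that the paper explicitly cites \cite{Maks:TrMath:2008} for the fact that $\Orbff$ coincides with the $\DiffIdM$-orbit of $\func$ and that the restricted map is still a Serre fibration, a point you assert without reference.
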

\begin{proof}
It is easy to show that if $\func$ satisfies \AxFibr, then $\Orbff$ is the orbit of $\func$ with respect to the action of $\DiffIdM$ and the projection $p:\DiffIdM\to\Orbff$ is a Serre fibration as well, see\;\cite{Maks:TrMath:2008}.
Hence we get the following part of exact sequence of homotopy groups
$$
\cdots \to \pi_1 \DiffIdM \to \pi_1 \Orbff \to \pi_0\StabAf \to \pi_0\DiffIdM \to \cdots 
$$
Since $\chi(\Mman)<0$, we have $\pi_1 \DiffIdM=0$, \cite{EarleSchatz:DG:1970, EarleEells:DG:1970, Gramain:ASENS:1973}.
Moreover, $\DiffIdM$ is path-connected, whence together with Theorem~\ref{th:stab1-repr} we obtain an isomorphism:
$$
\pi_1\Orbff \;\approx\;
\pi_0\StabAf \;\stackrel{i_0}{\approx}\;
\pi_0\StabAfneg \;\stackrel{\eqref{equ:StabAfneg-prod}}{\approx}\;
\mathop\times\limits_{i=1}^{l} \pi_0\StabAfBi.
$$
Theorem is proved.
\end{proof}
Thus a general problem of calculation of $\pi_1\Orbff$ for maps satisfying the above axioms completely reduces to the case when $\chi(\Mman)\geq0$.
A presentation for $\pi_1\Orbff$ will be given in another paper.

\subsection{Structure of the paper.}
In next four sections we study incompressible subsurfaces $\nmanif\subset\Mman$.
\S\ref{sect:incompr-subsurfaces} contains their definition and some elementary properties.
In\;\S\ref{sect:incompr-surf-associated-to-maps} we show how such subsurfaces appear in studying maps $\Mman\to\Psp$ with isolated singularities.
In \S\ref{sect:deform-incompr-surf} and \S\ref{sect:cell-auto} we extend results of W.\;Jaco and P.\;Shalen\;\cite{JacoShalen:Topology:1977} about deformations of incompressible subsurfaces and periodic automorphisms of surfaces.
\S\ref{sect:deform-diff-crit-comp} contains two technical statements about deformations of diffeomorphisms preserving a map $\Mman\to\Psp$.
Finally in\;\S\ref{sect:proof_th:stab1-repr} we prove Theorem\;\ref{th:stab1-repr}.

\section{Incompressible subsurfaces}\label{sect:incompr-subsurfaces}
The following Lemma\;\ref{lm:charact-incompr} is well-known, see e.g.\;\cite[Pr.\;2.1]{ParisRolfsen:AIF:1999}.
It was also implicitly formulated in~\cite[page 359]{JacoShalen:Topology:1977}.
\begin{lemma}\label{lm:charact-incompr}
{\rm1)}~Let $\Mman$ be a connected surface, and $\nmanif\subset\Int\Mman$ be a proper compact (possibly not connected) subsurface neither of whose connected components is a $2$-disk.
Then the following conditions are equivalent:
\begin{enumerate}
\item[\rm(a)]
for every connected component $\nmanif_i$ of $\nmanif$ the inclusion homomorphism $\pi_1\nmanif_i\to\pi_1\Mman$ is injective;
\item[\rm(b)]
none of the connected components of $\overline{\Mman\setminus\nmanif}$ is a $2$-disk.
\end{enumerate}

If these conditions hold, then $\nmanif$ will be called \myemph{incompressible}, see~\cite[Def.~3.2]{JacoShalen:Topology:1977}.
\end{lemma}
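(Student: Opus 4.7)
My plan is to establish both implications separately, leaning on a single basic surface-topology fact, immediate from the classification of compact surfaces:

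\smallskip
$(\star)$ \emph{If a simple closed curve on the boundary of a compact connected surface $S$ bounds an embedded $2$-disk inside $S$, then it is the entire boundary of $S$ and $S$ is that $2$-disk.}
\smallskip

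For $(a)\Rightarrow(b)$, suppose some connected component $D$ of $\overline{\Mman\setminus\nmanif}$ is a $2$-disk. Then $\partial D$ is a simple closed curve on $\partial\nmanif_i$ for some component $\nmanif_i$; since $\nmanif_i$ is not a disk by hypothesis, $(\star)$ applied inside $\nmanif_i$ forces $\partial D$ to be essential in $\nmanif_i$. On the other hand, $\partial D$ bounds the disk $D$ in $\Mman$, hence is trivial in $\pi_1\Mman$. This contradicts the injectivity of $\pi_1\nmanif_i\to\pi_1\Mman$, establishing $(a)\Rightarrow(b)$.

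For $(b)\Rightarrow(a)$, suppose some $\pi_1\nmanif_i\to\pi_1\Mman$ has nontrivial kernel. The surface loop theorem (elementary in dimension two, going back to Epstein) yields an essential simple closed curve $C\subset\nmanif_i$ bounding an embedded $2$-disk $E\subset\Mman$. After a small isotopy, $E$ is transverse to $\partial\nmanif$, and $E\cap\partial\nmanif$ is the disjoint union of $C=\partial E$ with finitely many simple closed curves interior to $E$. Choose a component $C'$ of this intersection innermost in $E$, bounding a subdisk $E'\subset E$ with $\Int E'\cap\partial\nmanif=\emptyset$; then $\Int E'$ lies on a single side of $\partial\nmanif$. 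If $\Int E'\subset\Int\nmanif$, then either $C'$ is an interior curve and $E'\subset\nmanif_j$ for some $j$, in which case $(\star)$ makes $\nmanif_j$ a disk and contradicts the hypotheses on $\nmanif$, or $C'=C$ and $E\subset\nmanif_i$, contradicting the essentiality of $C$. If instead $\Int E'\subset\Mman\setminus\nmanif$, then $E'$ lies in some component $W$ of $\overline{\Mman\setminus\nmanif}$, and $(\star)$ applied inside $W$ forces $W=E'$ to be a disk, contradicting $(b)$. Either way we have a contradiction, completing the proof.

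The only non-elementary input is the surface loop theorem, which promotes a nontrivial kernel element to an embedded essential simple closed curve bounding a disk in $\Mman$. Once this is in hand, the rest of the argument is a clean innermost-disk trichotomy on $E\cap\partial\nmanif$, each case collapsing directly to $(\star)$. I do not anticipate any serious obstacle beyond invoking the loop theorem, for which several elementary proofs exist in dimension two (e.g.\ via Dehn's lemma, or by passing to covers and exploiting torsion-freeness of compact surface groups).
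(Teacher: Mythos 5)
The paper itself offers no proof of this lemma: it is quoted as well known, with pointers to Paris--Rolfsen and Jaco--Shalen, so there is no in-paper argument to measure yours against. Your direction (a)$\Rightarrow$(b) is essentially fine: a disk component $D$ of the complement has $\partial D$ equal to a boundary circle of some non-disk component $\nmanif_i$, hence essential in $\nmanif_i$, yet it dies in $\pi_1\Mman$. Note, though, that this needs slightly more than your fact $(\star)$: $(\star)$ only says $\partial D$ bounds no disk \emph{inside} $\nmanif_i$, and to conclude that $\partial D$ is $\pi_1$-nontrivial in $\nmanif_i$ you must also know that a null-homotopic embedded circle bounds a disk (Epstein), or else read the nontriviality off the classification of compact surfaces. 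The innermost-disk trichotomy in the converse direction is also correctly executed, modulo the small slip that the curve $C$ produced by your loop theorem need not lie on $\partial\nmanif$, so it is not literally a component of $E\cap\partial\nmanif$; your case $C'=C$ still covers what happens there.

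The genuine gap is the ``surface loop theorem'' carrying the whole weight of (b)$\Rightarrow$(a). The statement that a non-injective $\pi_1\nmanif_i\to\pi_1\Mman$ forces an \emph{embedded} essential circle in $\nmanif_i$ bounding a disk in $\Mman$ is exactly as strong as the implication you are proving: your innermost argument derives (b)$\Rightarrow$(a) from it, and conversely the contrapositive of (b)$\Rightarrow$(a) yields it at once (take the boundary of the disk component). In the standard references it is \emph{deduced from} the present lemma, not available independently of it, so as written your argument is circular unless you supply an independent proof. The justifications you sketch do not do this: Epstein's two-dimensional Dehn lemma only upgrades an already embedded null-homotopic circle to one bounding an embedded disk, and passing to covers plus torsion-freeness at best produces an essential simple closed curve that is null-\emph{homologous} downstairs; neither manufactures an embedded essential curve in the kernel from an arbitrary non-simple kernel element, which is where the entire difficulty lives. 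The non-circular route is to prove (b)$\Rightarrow$(a) directly: cut $\Mman$ along $\partial\nmanif$, observe that each gluing circle is $\pi_1$-injective in every adjacent piece precisely because no piece of $\nmanif$ or of its complement is a disk, apply van Kampen to present $\pi_1\Mman$ as the fundamental group of a graph of groups with infinite cyclic edge groups, and invoke the Bass--Serre fact that vertex groups embed. I recommend replacing the loop-theorem step by this argument (your (a)$\Rightarrow$(b) and the fact $(\star)$ can stay as they are).
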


\begin{corollary}\label{cor:incompr_chiM_chiN}
If $\nmanif\subset\Mman$ is incompressible, then $\chi(\Mman)\leq \chi(\nmanif)$.
\end{corollary}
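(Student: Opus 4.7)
The plan is to decompose $\Mman$ along $\partial\nmanif$ into $\nmanif$ and its complementary subsurface $\compl=\overline{\Mman\setminus\nmanif}$, and then combine additivity of Euler characteristic with the classification of compact surfaces with boundary. Since $\nmanif\subset\Int\Mman$, the intersection $\nmanif\cap\compl$ is exactly $\partial\nmanif$, a disjoint union of circles, so $\chi(\nmanif\cap\compl)=0$. Using any CW decomposition of $\Mman$ subordinate to the pair $(\nmanif,\compl)$ (or equivalently Mayer--Vietoris), I obtain
$$
\chi(\Mman) \;=\; \chi(\nmanif) + \chi(\compl) - \chi(\nmanif\cap\compl) \;=\; \chi(\nmanif)+\chi(\compl).
$$

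Next I would show $\chi(\compl)\leq 0$ by analyzing each connected component $C$ of $\compl$ separately. First, $C$ has nonempty boundary: since $\nmanif\subset\Int\Mman$ is proper and $\Mman$ is connected, any component $C$ whose frontier in $\Mman$ were disjoint from $\partial\nmanif$ would be clopen in $\Mman$, contradicting connectedness; hence $\partial C\cap\partial\nmanif\neq\emptyset$. Thus $C$ is a connected compact surface with nonempty boundary. By the incompressibility hypothesis combined with the characterization in Lemma~\ref{lm:charact-incompr}(b), $C$ is not a $2$-disk. For a connected compact surface with nonempty boundary the classification gives $\chi = 2-2g-b$ in the orientable case and $\chi=2-k-b$ in the non-orientable case (with $k,b\geq 1$), so $\chi(C)\leq 1$ with equality \emph{only} for the disk; hence $\chi(C)\leq 0$ for each component $C$, and summing yields $\chi(\compl)\leq 0$.

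Combining the two steps gives $\chi(\Mman) = \chi(\nmanif)+\chi(\compl) \leq \chi(\nmanif)$, as required. The only point requiring a bit of care is the argument that each component of $\compl$ actually has nonempty boundary (so the classification applies and rules out closed surfaces); this is a purely topological consequence of the connectedness of $\Mman$ and the assumption $\nmanif\subsetneq\Mman$. Everything else is formal: additivity of $\chi$ across a $1$-dimensional separating submanifold, and the standard classification of compact surfaces with boundary.
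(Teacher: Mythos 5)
Your argument is correct, and it is exactly the argument the paper implicitly relies on (the corollary is stated without proof, immediately after Lemma~\ref{lm:charact-incompr}): additivity of $\chi$ over the decomposition $\Mman=\nmanif\cup\overline{\Mman\setminus\nmanif}$ along the circles $\partial\nmanif$, together with condition (b) of that lemma, which guarantees that each complementary component is a compact surface with nonempty boundary other than a disk and hence has non-positive Euler characteristic. No gaps; the care you take to verify that each complementary component has nonempty boundary is the right point to check.
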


\begin{corollary}\label{cor:incompr_pi1_incl_homomorphism}
Let $\regnbh\subset\Int\Mman$ be a proper compact connected subsurface.
Then the following conditions are equivalent:
\begin{enumerate}
 \item[\rm(R1)]
the homomorphism $\xi:\pi_1\regnbh\to\pi_1\Mman$ is trivial;
 \item[\rm(R2)]
$\regnbh$ is contained in some $2$-disk $D\subset\Mman$.
\end{enumerate}
\end{corollary}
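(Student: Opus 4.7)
Proof proposal: The implication (R2)$\Rightarrow$(R1) is immediate because the inclusion homomorphism $\pi_1\regnbh\to\pi_1\Mman$ factors through $\pi_1 D=1$. The content lies in (R1)$\Rightarrow$(R2), and my plan is to iteratively enlarge $\regnbh$ by capping off disk components of its complement, producing an ascending sequence of compact connected subsurfaces of $\Int\Mman$ whose fundamental groups still map trivially into $\pi_1\Mman$, until I arrive at a $2$-disk containing $\regnbh$.

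First dispose of the trivial case that $\regnbh$ is already a disk. Otherwise $\regnbh$ has nonempty boundary (being a proper compact subsurface of $\Int\Mman$) and is not a disk, so $\pi_1\regnbh$ is a nontrivial free group. The hypothesis (R1) then makes the inclusion $\xi$ non-injective, and Lemma~\ref{lm:charact-incompr} delivers a connected component $D_1$ of $\overline{\Mman\setminus\regnbh}$ that is a $2$-disk. Since $\regnbh\cap\partial\Mman=\emptyset$ and $\Mman$ is connected, every component of $\overline{\Mman\setminus\regnbh}$ must meet $\partial\regnbh$; consequently the unique boundary circle of $D_1$ lies in $\partial\regnbh$ rather than in $\partial\Mman$, forcing $D_1\subset\Int\Mman$. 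Set $\regnbh_1\DEFN\regnbh\cup D_1$. This is again a compact connected subsurface of $\Int\Mman$ with one fewer boundary circle than $\regnbh$ and Euler characteristic larger by one; van Kampen gives that $\pi_1\regnbh\to\pi_1\regnbh_1$ is surjective, so $\pi_1\regnbh_1\to\pi_1\Mman$ remains trivial.

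Iterating this construction produces $\regnbh\subset\regnbh_1\subset\regnbh_2\subset\cdots$, and the strict growth of $\chi$ forces termination after finitely many steps. At the terminal $\regnbh_k$ the procedure yields no further disk complementary component, so either $\regnbh_k$ is itself a $2$-disk (and I take $D\DEFN\regnbh_k$), or $\regnbh_k$ is incompressible by Lemma~\ref{lm:charact-incompr}. In the second case the inclusion $\pi_1\regnbh_k\to\pi_1\Mman$ is simultaneously injective and trivial, so $\pi_1\regnbh_k=1$; since $\regnbh_k$ is a connected compact surface, it is either a disk (done) or a closed simply connected surface, i.e.\ a sphere. In the latter situation $\regnbh_k=\Mman=\sphere$, and choosing any point $p\in\Mman\setminus\regnbh$ we have $\regnbh\subset\Mman\setminus\{p\}\cong\RRR^2$; compactness of $\regnbh$ then places it inside a closed $2$-disk $D\subset\Mman$.

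The main obstacle I expect is the endgame: ensuring the iteration really terminates with $\regnbh$ inside an honest $2$-disk rather than in a sphere-like configuration, which is precisely why the spherical case needs separate treatment. Verifying at each iteration that the capped disk $D_i$ lies in $\Int\Mman$, and not pressed against $\partial\Mman$, is the other subtle point; it is handled uniformly by the observation that $\regnbh\cap\partial\Mman=\emptyset$ together with the connectedness of $\Mman$.
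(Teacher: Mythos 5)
Your proof is correct and follows essentially the same route as the paper: both arguments enlarge $\regnbh$ by adjoining the $2$-disk components of its complement, invoke Lemma~\ref{lm:charact-incompr} to conclude the enlarged surface is incompressible, and exploit the surjectivity of $\pi_1\regnbh\to\pi_1$ of that enlarged surface. The differences are only presentational: the paper caps all the disks in one step and argues the contrapositive, whereas you cap one disk at a time with an Euler-characteristic termination argument and separately dispose of the (in fact vacuous) spherical endgame.
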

\begin{proof}
The implication (R2)$\Rightarrow$(R1) is evident.

(R1)$\Rightarrow$(R2).
Suppose $\regnbh$ is not contained in any $2$-disk.
We will show that $\xi$ is non-trivial.
Let $\cannbh$ be the union of $\regnbh$ with all of the connected components of $\overline{\Mman\setminus\cannbh}$ which are $2$-disks.
Then by our assumption $\cannbh$ is not a $2$-disk and by Lemma\;\ref{lm:charact-incompr} $\cannbh$ is incompressible.
Notice that $\xi$ is a product of homomorphisms induced by the inclusions $\regnbh\subset\cannbh\subset\Mman$:
$$\xi=\beta\circ\alpha:\pi_1\regnbh \stackrel{\alpha}{\to} \pi_1\cannbh \stackrel{\beta}{\to} \pi_1\Mman.$$
Also notice that $\alpha$ is surjective and by Lemma\;\ref{lm:charact-incompr} $\beta$ is a non-trivial monomorphism.
Hence $\xi$ is also non-trivial.
\end{proof}

\begin{corollary}\label{cor:incompr_complement}
Let $\regnbh\subset\Int\Mman$ be a proper (possibly non connected) subsurface such that neither of its connected components is contained in some $2$-disk.
Then every connected component $B$ of $\overline{\Mman\setminus\regnbh}$ which is not a $2$-disk is incompressible.
\end{corollary}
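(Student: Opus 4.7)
The plan is to verify criterion~(b) of Lemma~\ref{lm:charact-incompr} directly for $B$: no connected component of $\overline{\Mman\setminus B}$ is a $2$-disk. Denoting the remaining connected components of $\overline{\Mman\setminus\regnbh}$ by $B_1,\dots,B_k$ (finitely many by compactness), the ambient set in question decomposes as
\[
\overline{\Mman\setminus B}\;=\;\regnbh\,\cup\,B_1\,\cup\,\cdots\,\cup\,B_k.
\]

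The heart of the argument is the following claim: every connected component $C$ of $\overline{\Mman\setminus B}$ contains at least one whole connected component of $\regnbh$. First I would observe that, since $\regnbh\subset\Int\Mman$ and $\Mman$ is connected, each $B_i$ must meet $\regnbh$ along a circle of $\partial\regnbh$ --- otherwise $B_i$ would be clopen in $\Mman$. Next, if some $C$ were disjoint from $\regnbh$, then $C\subset B_1\cup\cdots\cup B_k$ with the $B_i$ pairwise disjoint and closed; connectedness of $C$ would force $C\subset B_{i_0}$ for a single index $i_0$. But $B_{i_0}$ is connected, lies in $\overline{\Mman\setminus B}$, and meets $\regnbh$, so $B_{i_0}$ and some component of $\regnbh$ share the same component of $\overline{\Mman\setminus B}$, contradicting $C\cap\regnbh=\emptyset$. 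Finally, once $C\cap\regnbh\neq\emptyset$, the facts that $\regnbh\subset\overline{\Mman\setminus B}$ and that each component of $\regnbh$ is connected force a whole component $\regnbh_0\subset C$.

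With the claim in hand, if some component $C$ of $\overline{\Mman\setminus B}$ were a $2$-disk, then $\regnbh_0\subset C$ would be a connected component of $\regnbh$ contained in a $2$-disk of $\Mman$, contradicting the standing hypothesis. Hence no component of $\overline{\Mman\setminus B}$ is a $2$-disk, and $B$ is incompressible.

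I do not anticipate a serious obstacle; the only mildly delicate point is that $B$ itself may meet $\partial\Mman$, so condition~(a) of Lemma~\ref{lm:charact-incompr} is not directly at one's disposal. The entire argument is therefore conducted via the complementary-disk characterization~(b), which I take as the operative definition of incompressibility in this setting.
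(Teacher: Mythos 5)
Your proof is correct and follows essentially the same route as the paper: both reduce to criterion (b) of Lemma~\ref{lm:charact-incompr} and rest on the key observation that every connected component $C$ of $\overline{\Mman\setminus B}$ contains an entire connected component $\regnbh_0$ of $\regnbh$ (a point the paper merely asserts and you verify in more detail). The only divergence is the final step: the paper invokes Corollary~\ref{cor:incompr_pi1_incl_homomorphism} to conclude that $\pi_1 C\to\pi_1\Mman$ is non-trivial and hence $C$ is not a $2$-disk, whereas you observe directly that a $2$-disk $C$ would exhibit $\regnbh_0$ as contained in a $2$-disk of $\Mman$, contradicting the hypothesis --- slightly more elementary and equally valid.
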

\begin{proof}
Let $C$ be a connected component of $\overline{\Mman\setminus B}$.
Due to Lemma\;\ref{lm:charact-incompr} it suffices to show that $C$ is not a $2$-disk.
Notice that $C\cap\regnbh\not=\varnothing$, whence it contains some connected component $\regnbh_i$ of $\regnbh$.
By Corollary\;\ref{cor:incompr_pi1_incl_homomorphism} the product of homomorphisms $\pi_1\regnbh_i\to\pi_1 C \to \pi_1\Mman$ is non-trivial, and therefore $\pi_1 C \to \pi_1\Mman$ is also non-trivial.
This implies that $C$ is not a $2$-disk.
\end{proof}

\section{Incompressible subsurfaces associated to a map $\Mman\to\Psp$}\label{sect:incompr-surf-associated-to-maps}
\subsection{Singular foliation $\partf$ of $\func$.}\label{sect:singular-foliat}
Let $\func:\Mman\to\Psp$ be a map satisfying axioms \AxBd\ and \AxIsol.
Then $\func$ induces on $\Mman$ a one-dimen\-sional foliation $\partf$ with singularities defined as follows: \myemph{a subset $\omega\subset\Mman$ is a leaf of $\partf$ if and only if $\omega$ is either a critical point of $\func$ or a connected component of the set $\func^{-1}(c)\setminus\singf$ for some $c\in\Psp$}.
Thus the leaves of $\partf$ are $1$-dimensional submanifolds of $\Mman$ and critical points of $\func$.
Local structure of $\partf$ near critical points of $\func$ is illustrated in Figure\;\ref{fig:isol_crit_pt}.

Denote by $\partfreg$ the union of all leaves of $\partf$ homeomorphic to the circle and by $\partfcr$ the union of all other leaves.
The leaves in $\partfreg$ (resp. $\partfcr$) will be called \myemph{regular} (resp. \myemph{critical}).
Similarly, connected components of $\partfreg$ (resp. $\partfcr$) will be called \myemph{regular} (resp. \myemph{critical}) components of $\partf$.
It follows from \AxBd\ that $\partial\Mman\subset\partfreg$.
It is also evident, that every critical leaf of $\partfcr$ either is homeomorphic to an open interval or is a critical point of $\func$.

\subsection{Atoms and canonical neighbourhoods of critical components of $\partf$.}\label{sect:atoms}
For every critical component $\crcomp$ of $\partf$ define its regular neighbourhood $\regnbhg$ as follows.
Let $c_1,\ldots,c_l$ be all the critical values of $\func$ and the values of $\func$ on $\partial\Mman$.
Since $\Mman$ is compact, it follows from axioms \AxBd\ and \AxIsol\ that $l$ is finite.
For each $i=1,\ldots,l$ let $\Wman_i\subset\Psp$ be a closed connected neighbourhood (i.e. just an arc) of $c_i$ containing no other $c_j$.
We will assume that $\Wman_i\cap\Wman_j=\varnothing$ for $i\not=j$.

Now let $\crcomp$ be a critical component of $\partf$.
Then $\func(\crcomp)=c_i$ for some $i$.
Let $\regnbha{\crcomp}$ be the connected component of $\func^{-1}(\Wman_i)$ containing $\crcomp$.
Evidently, $\regnbha{\crcomp}$ is a union of leaves of $\partf$.
Following~\cite{BolsinovFomenko:1997} we will call $\regnbha{\crcomp}$ an \myemph{atom} of $\crcomp$, see Figure\;\ref{fig:atom}.
\begin{figure}[ht]
\includegraphics[height=2cm]{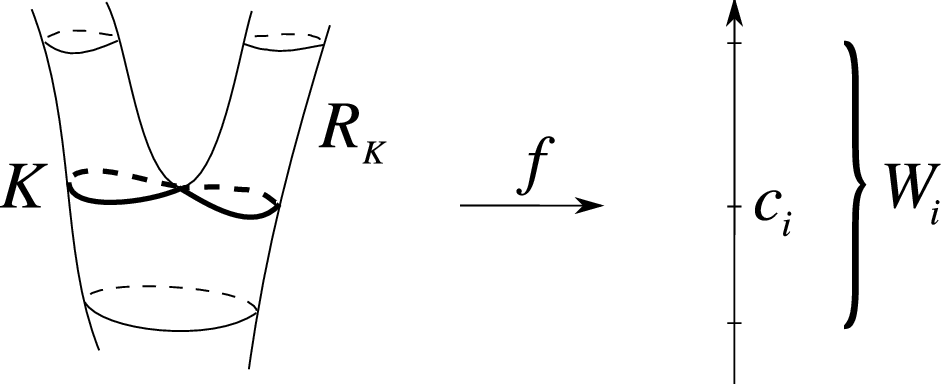}
\caption{}\protect\label{fig:atom}
\end{figure}

Evidently, $\regnbha{\crcomp}$ is a regular neighbourhood of $\crcomp$ with respect to some triangulation of $\Mman$.
Similarly to\;\cite{JacoShalen:Topology:1977} define the \myemph{canonical neighbourhood} $\cannbha{\crcomp}$ of $\crcomp$ to be the union of $\regnbha{\crcomp}$ with all the connected components of $\overline{\Mman\setminus\regnbha{\crcomp}}$ being $2$-disks.
If $\cannbha{\crcomp}$ is not a $2$-disk, then by Lemma~\ref{lm:charact-incompr} $\cannbha{\crcomp}$ is incompressible in $\Mman$.

Notice that 
\begin{equation}\label{equ:partial_regnbh}
\partial\regnbha{\crcomp} \ = \ \func^{-1}(\partial\Wman_i) \ \cap \ \regnbha{\crcomp}.
\end{equation}
Let $\crcomp'$ be another critical component of $\partf$ such that $\func(\crcomp')=\func(\crcomp)$.
Since $\regnbha{\crcomp'}$ is also constructed via $\Wman_i$, we obtain from\;\eqref{equ:partial_regnbh} that \myemph{$\func$ takes on $\partial\regnbha{\crcomp'}$ the same values as on $\partial\regnbha{\crcomp}$.}
This technical assumption is not essential, however it will be useful for the proof of Theorem\;\ref{th:stab1-repr}.

\begin{lemma}\label{lm:properties_of_atoms}
Let $\crcomp$ and $\crcomp'$ be two distinct critical components of $\partf$.
\begin{enumerate}
 \item[(i)]
Then $\regnbha{\crcomp}\cap\regnbha{\crcomp'}=\varnothing$, while $\cannbha{\crcomp}$ and $\cannbha{\crcomp'}$ are either disjoint or one of them, say $\cannbha{\crcomp}$, is contained in $\cannbha{\crcomp'}$.
In the last case $\cannbha{\crcomp}$ is a $2$-disk.

 \item[(ii)] 
Suppose $\func(\crcomp)=\func(\crcomp')$ and there exists $\dif\in\Stabf$ such that $\dif(\crcomp)=\crcomp'$.
Then $\dif(\regnbha{\crcomp})=\regnbha{\crcomp'}$ and $\dif(\cannbha{\crcomp})=\cannbha{\crcomp'}$.
\end{enumerate}
\end{lemma}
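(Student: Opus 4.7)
My plan for part (i) splits into the regular and canonical neighbourhoods. For regular neighbourhoods, I would argue directly from the definition: with $\func(\crcomp)=c_i$ and $\func(\crcomp')=c_j$, the neighbourhoods $\regnbha{\crcomp}$ and $\regnbha{\crcomp'}$ are connected components of $\func^{-1}(\Wman_i)$ and $\func^{-1}(\Wman_j)$ respectively. If $i\neq j$ then $\Wman_i\cap\Wman_j=\varnothing$ forces disjointness of the preimages; if $i=j$ then the implicit standing assumption that each $\Wman_i$ is chosen small enough for distinct critical components at level $c_i$ to lie in distinct components of $\func^{-1}(\Wman_i)$ forces $\regnbha{\crcomp}$ and $\regnbha{\crcomp'}$ to be different components, hence disjoint.

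Write $X=\regnbha{\crcomp}$, $Y=\regnbha{\crcomp'}$, and let $C_X$ be the component of $\overline{\Mman\setminus X}$ containing $Y$ (which exists since $Y$ is connected and disjoint from $X$). Since $\partial C_X\subset\partial X$ is disjoint from $Y$, in fact $Y\subset\Int C_X$. I would then case-split on whether $C_X$ is a $2$-disk. If $C_X$ is a $2$-disk, then $C_X\subset\cannbha{\crcomp}$ by construction, so $Y\subset\cannbha{\crcomp}$; moreover $Y$ is a compact connected planar subsurface of $C_X$, and in the generic situation filling in its ``holes'' turns $\cannbha{\crcomp'}$ into a $2$-disk sitting inside $C_X\subset\cannbha{\crcomp}$. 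If instead $C_X$ is not a $2$-disk, I would show $\cannbha{\crcomp}\cap\cannbha{\crcomp'}=\varnothing$: the $2$-disks added to $X$ are all components of $\overline{\Mman\setminus X}$ different from $C_X$, hence disjoint from $C_X\supset Y$, and symmetrically for $Y$.

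The main obstacle is ruling out that a $2$-disk $D$ added to $X$ and a $2$-disk $D'$ added to $Y$ might a priori meet. My plan is a connectedness argument: since $D$ is disjoint from $Y$, we have $D\subset\overline{\Mman\setminus Y}$, and connectedness of $D$ forces it to lie in a single component of $\overline{\Mman\setminus Y}$; if $D\cap D'\neq\varnothing$ then that component must be $D'$, so $D\subset D'$. Symmetrically $D'\subset D$, whence $D=D'$; but then $\partial D=\partial D'\subset\partial X\cap\partial Y=\varnothing$, contradicting that $D$ is a $2$-disk.

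Part (ii) is essentially formal. Because $\func\circ\dif=\func$, the diffeomorphism $\dif$ preserves each preimage $\func^{-1}(\Wman_i)$ and permutes its connected components; the one containing $\crcomp$ must go to the one containing $\dif(\crcomp)=\crcomp'$, giving $\dif(\regnbha{\crcomp})=\regnbha{\crcomp'}$. Since $\dif$ is a global diffeomorphism of $\Mman$, it maps the $2$-disk components of $\overline{\Mman\setminus\regnbha{\crcomp}}$ bijectively to the $2$-disk components of $\overline{\Mman\setminus\regnbha{\crcomp'}}$, so $\dif(\cannbha{\crcomp})=\cannbha{\crcomp'}$ as required.
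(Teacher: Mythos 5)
Your part (ii) and the disjointness of the atoms are in order; the paper itself leaves these as an exercise, citing only $\Wman_i\cap\Wman_j=\varnothing$ and \eqref{equ:partial_regnbh}. (For the same-level case you do not actually need an unstated ``smallness'' assumption on $\Wman_i$: since $\Wman_i\setminus\{c_i\}$ contains no critical values and no values of $\func|_{\partial\Mman}$, each connected component of $\func^{-1}(\Wman_i)$ meets $\func^{-1}(c_i)$ in a single connected set, hence contains at most one critical component.)

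The case analysis for the canonical neighbourhoods, however, has a genuine gap. Your dichotomy is asymmetric: you split on whether $C_X$, the component of $\overline{\Mman\setminus\regnbha{\crcomp}}$ containing $\regnbha{\crcomp'}$, is a $2$-disk, and in the negative case you conclude disjointness. But nesting also occurs when $C_X$ is \emph{not} a disk while the component of $\overline{\Mman\setminus\regnbha{\crcomp'}}$ containing $\regnbha{\crcomp}$ \emph{is} a disk: that disk is then added to $\cannbha{\crcomp'}$, so $\cannbha{\crcomp}\subset\cannbha{\crcomp'}$ and the two are certainly not disjoint. This is exactly what happens in the paper's torus example (Figure~\ref{fig:cannbh}), where small disk-shaped canonical neighbourhoods sit inside the three large ones; for such a pair (with $\crcomp$ the ``small'' component) your second case applies and returns the wrong answer. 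The clause ``and symmetrically for $Y$'' silently assumes the complementary component on the $Y$-side containing $X$ is also not a disk, which does not follow from your hypothesis. The correct trichotomy is: either $\regnbha{\crcomp'}$ lies in a disk component of $\overline{\Mman\setminus\regnbha{\crcomp}}$, or $\regnbha{\crcomp}$ lies in a disk component of $\overline{\Mman\setminus\regnbha{\crcomp'}}$, or neither; only in the last case are the canonical neighbourhoods disjoint. Separately, ``in the generic situation filling in its holes turns $\cannbha{\crcomp'}$ into a $2$-disk'' is not an argument: the components of $\overline{\Mman\setminus\regnbha{\crcomp'}}$ lying inside the disk $C_X$ need not themselves be disks, so it is not obvious that $\cannbha{\crcomp'}$ is a disk. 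The clean finish uses Lemma~\ref{lm:charact-incompr}: if $\cannbha{\crcomp'}$ were not a $2$-disk it would be incompressible (no component of its complement is a disk, by construction), but then $\pi_1\cannbha{\crcomp'}$ would inject into $\pi_1\Mman$ through $\pi_1 C_X=1$, forcing $\cannbha{\crcomp'}$ to be simply connected, i.e. a $2$-disk after all.
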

\begin{proof}
(i) follows from the assumption that $\Wman_i\cap\Wman_j=\varnothing$ for $i\not=j$, and (ii) follows from\;\eqref{equ:partial_regnbh}.
We leave the details for the reader.
\end{proof}

\begin{lemma}\label{lm:crcomp-2disk}
Let $\crcomp$ be a critical component of $\partf$ such that $\cannbha{\crcomp}$ is a $2$-disk.
Then either 
\begin{enumerate}
\item[(i)] 
$\Mman$ is a $2$-disk itself, or
\item[(ii)]
$\cannbha{\crcomp}$ is contained in a unique canonical neighbourhood $\cannbha{\crcomp'}$ of another critical component $\crcomp'$ of $\partf$ such that $\cannbha{\crcomp'}$ is not a $2$-disk.
\end{enumerate}
\end{lemma}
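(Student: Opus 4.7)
The plan is to assume that $\Mman$ is not a $2$-disk and explicitly construct a canonical neighbourhood strictly containing $\cannbha{\crcomp}$ by ``pushing off'' from $\partial\cannbha{\crcomp}$ along $\partf$; uniqueness of the non-disk containing neighbourhood is then read off from Lemma~\ref{lm:properties_of_atoms}(i).

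Since $\cannbha{\crcomp}$ is a $2$-disk, it has a unique boundary circle $\gamma\subset\partial\regnbha{\crcomp}$, which is a regular leaf of $\partf$ at a regular value $c\in\partial\Wman_i$. Set $W=\overline{\Mman\setminus\cannbha{\crcomp}}$; since $\gamma$ is a single separating circle bounding a disk, $W$ is connected and has $\gamma$ as one of its boundary circles. In $W$ I would follow from $\gamma$ the one-parameter family of regular leaves of $\partf$ parallel to $\gamma$ until it first meets either $\partial\Mman$ or the boundary of another atom $\regnbha{\crcomp'}$. On this family $\func$ is a submersion with compact circle fibres over an interval, so the resulting region $A\subset W$ is a trivial $S^1$-bundle, i.e.\ a genuine annulus with two distinct boundary circles $\gamma$ and $\gamma'$ (any Möbius-band or wrap-around scenario being excluded by triviality of $S^1$-bundles over an interval base).

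Next, form the $2$-disk $D':=\cannbha{\crcomp}\cup A$, with $\partial D'=\gamma'$. If $\gamma'\subset\partial\Mman$, then the topological boundary of $D'$ in $\Mman$ lies entirely in $\partial\Mman$, so $D'$ is both open and closed in $\Mman$; connectedness would then force $\Mman=D'$ to be a disk, contradicting the standing assumption. Hence $\gamma'\subset\partial\regnbha{\crcomp'}$ for some critical component $\crcomp'\neq\crcomp$---the inequality holding because every boundary circle of $\regnbha{\crcomp}$ other than $\gamma$ is adjacent to an absorbed disk and so lies in $\Int\cannbha{\crcomp}$, hence is disjoint from $W$. Let $\tilde D$ be the connected component of $\overline{\Mman\setminus\regnbha{\crcomp'}}$ containing $D'$. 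At each point of $\gamma'\subset\partial\tilde D$ a small disk neighbourhood in $\Mman$ splits into a half inside $\regnbha{\crcomp'}$ and a half inside $D'$, from which one sees that $D'$ is open as well as closed in $\tilde D$; by connectedness $D'=\tilde D$, so $\tilde D$ is a $2$-disk and, by the very definition of the canonical neighbourhood, $\tilde D\subset\cannbha{\crcomp'}$. Therefore $\cannbha{\crcomp}\subsetneq\cannbha{\crcomp'}$.

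Uniqueness and the ``non-disk'' property are now settled using Lemma~\ref{lm:properties_of_atoms}(i): the canonical neighbourhoods strictly containing $\cannbha{\crcomp}$ are pairwise nested and form a chain, and in each strict inclusion the smaller element is a $2$-disk, so at most one element of the chain can fail to be a $2$-disk (necessarily the maximal one). If the $\cannbha{\crcomp'}$ produced above happens to be a $2$-disk, one re-applies the construction with $\crcomp'$ in place of $\crcomp$---the hypothesis ``$\Mman$ not a disk'' persists---to enlarge the chain; finiteness of the set of critical components then forces termination at a non-disk element, which is the required $\crcomp'$ and is unique by the preceding observation. The main technical obstacles are the submersion argument that identifies $A$ as a planar annulus with the prescribed opposite boundary $\gamma'$, and the clopen argument identifying $D'$ with the whole component $\tilde D$; once both are in place, the absorption into $\cannbha{\crcomp'}$ and the rest of the proof are a formal consequence of Lemma~\ref{lm:properties_of_atoms}.
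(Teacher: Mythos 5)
Your argument is correct and follows essentially the same route as the paper: both attach the cylinder of regular leaves adjacent to $\partial\cannbha{\crcomp}$ to obtain a larger $2$-disk, observe that it must lie in the canonical neighbourhood of the next critical component met, and iterate over the finitely many critical components (the paper phrases this as an induction on the number of critical components in $\overline{\Mman\setminus\cannbha{\crcomp}}$ and reads the cylinder off from the global decomposition $\overline{\Mman\setminus\regnbhAll}$ rather than constructing it by hand). Your explicit treatment of uniqueness via the nesting property of Lemma~\ref{lm:properties_of_atoms}(i) is a welcome addition that the paper leaves implicit.
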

\begin{proof}
Let $\regnbhAll$ be the union of atoms of all critical components of $\partf$.
Then every connected component $B$ of $\overline{\Mman\setminus\regnbhAll}$ is diffeomorphic to the cylinder $S^1\times[0,1]$ and the restriction $\func|_{B}$ has no critical points.

Notice that $\overline{\Mman\setminus\cannbha{\crcomp}}$ is connected since $\cannbha{\crcomp}$ is a $2$-disk.
Also, there exists a unique connected component $B$ (being a cylinder $S^1\times[0,1]$) of $\overline{\Mman\setminus\regnbhAll}$ such that $\partial\cannbha{\crcomp}\subset B$.
Then $\cannbha{\crcomp}\cup B$ is also a $2$-disk.

Let $n$ be the total number of critical components of $\partf$ in $\overline{\Mman\setminus\cannbha{\crcomp}}$.

If $n=0$, then $\cannbha{\crcomp}\cup B=\Mman$. Whence $\Mman$ is a $2$-disk.

Suppose that $n\geq 1$. 
Let $\gamma$ be another connected component of $\partial B$ distinct from $\partial\cannbha{\crcomp}$.
Then there exists an atom $\regnbha{\crcomp'}$ of some critical component $\crcomp'$ of $\partf$ such that $\gamma\subset\partial\regnbha{\crcomp'}$.
Since $\cannbha{\crcomp}\cup B$ is a $2$-disk, we see that it is contained in $\cannbha{\crcomp'}$.
If $\cannbha{\crcomp'}$ is not a $2$-disk, then the lemma is proved.
Otherwise, the number of critical components in $\overline{\Mman\setminus\cannbha{\crcomp'}}$ is less than in $\overline{\Mman\setminus\cannbha{\crcomp}}$ and the lemma holds by the induction on $n$.
\end{proof}

\begin{example}\rm
Let $\torus$ be a $2$-torus embedded in $\RRR^3$ as shown in Figure~\ref{fig:cannbh} and $\func:\torus\to\RRR$ be the projection onto the vertical line.
Figure~\ref{fig:cannbh}a) shows the critical components of level-sets of $\func$, and Figure~\ref{fig:cannbh}b) presents blackened canonical neighbourhoods of three critical components of $\partf$ containing canonical neighbourhoods of all other critical components of $\partf$.
\end{example}
\begin{figure}[ht]
\begin{tabular}{ccc}
\includegraphics[height=3cm]{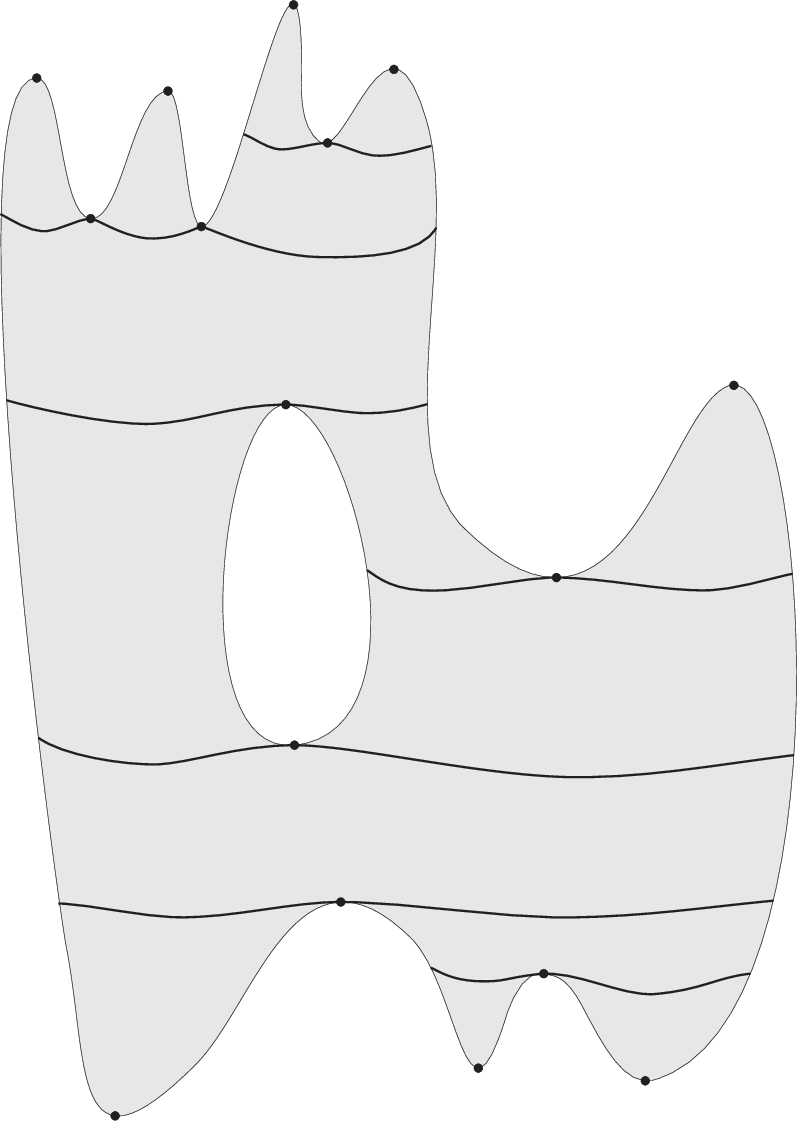}
& \qquad\qquad\qquad &
\includegraphics[height=3cm]{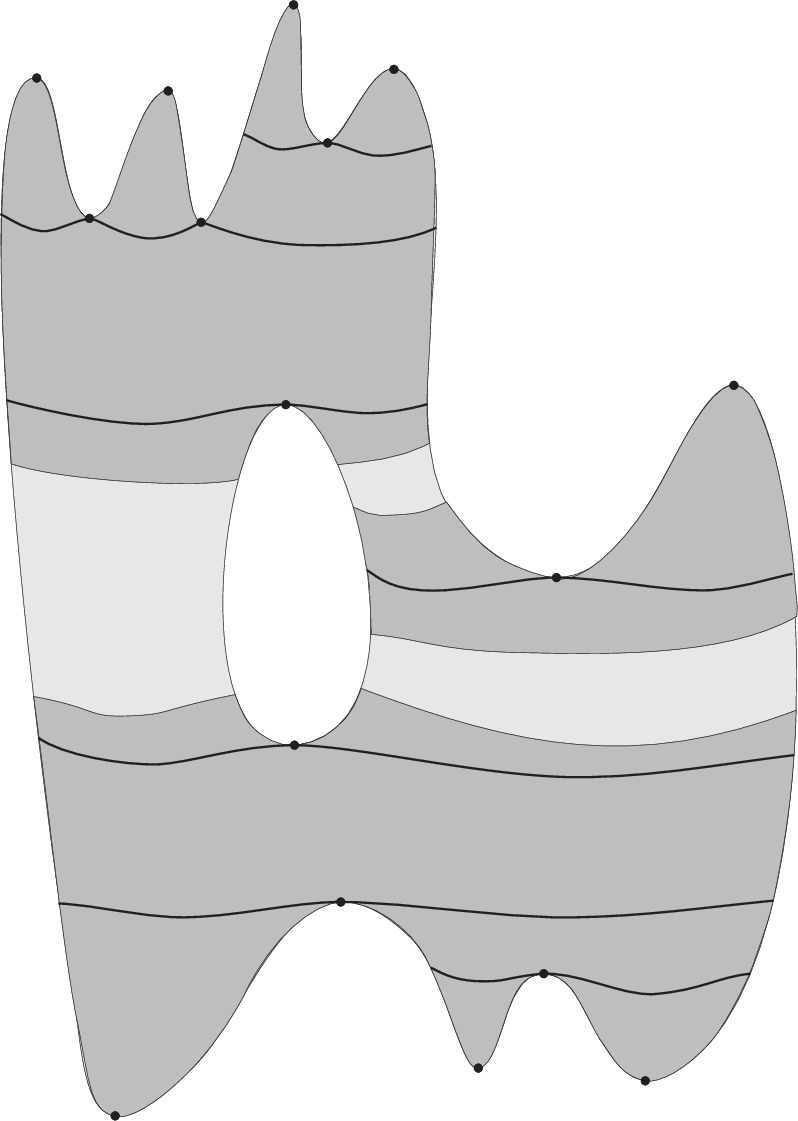}
\\
a) & & b) 
\end{tabular}
\caption{}\protect\label{fig:cannbh}
\end{figure}

\subsection{Canonical neighbourhoods of negative Euler characteristic.}
Suppose $\Mman$ is not a $2$-disk.
Let $\crcomp_1,\ldots,\crcomp_r$ be all the critical components of $\partf$ whose canonical neighbourhoods are not $2$-disks.
By Lemma\;\ref{lm:crcomp-2disk} this collection is non-empty and by Lemma\;\ref{lm:properties_of_atoms} $\cannbha{\crcomp_i}\cap\cannbha{\crcomp_j}=\varnothing$ for $i\not=j$.
Moreover, again by Lemma\;\ref{lm:crcomp-2disk}, any other critical component of $\partf$ is contained in some $\cannbha{\crcomp_i}$.
It follows that $\overline{\Mman\setminus\cup_{i=1}^{r}\cannbha{\crcomp_i}}$ contains no critical points of $\func$, whence it is a disjoint union of cylinders $S^1\times I$.
Therefore 
\begin{equation}\label{equ:chiM_sum_chiNKi}
\chi(\Mman)=\sum_{i=1}^{r}\chi(\cannbha{\crcomp_i}).
\end{equation}

The following two statements will be used for the construction of a surface $\Mmanneg$ of Theorem\;\ref{th:stab1-repr}, see\;\S\ref{sect:proof_th:stab1-repr}.
\begin{lemma}\label{lm:cannhb_chi_neq}
The following conditions are equivalent:
\begin{enumerate}
 \item[\rm(1)]
$\chi(\Mman)<0$;
 \item[\rm(2)]
$\chi(\cannbha{\crcomp_i})<0$ for some $i=1,\ldots,r$.
\end{enumerate}
\end{lemma}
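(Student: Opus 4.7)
The argument will rest almost entirely on the additivity formula \eqref{equ:chiM_sum_chiNKi}, so the only real work is to show that each summand $\chi(\cannbha{\crcomp_i})$ is non-positive (apart from a degenerate situation in which the equivalence is trivial).

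First I will check the dichotomy: either $\cannbha{\crcomp_i}=\Mman$, or else $\chi(\cannbha{\crcomp_i})\le 0$. In the second case $\cannbha{\crcomp_i}$ is a compact connected subsurface strictly contained in the connected surface $\Mman$, so its topological frontier in $\Mman$ is non-empty and is contained in the manifold boundary $\partial\cannbha{\crcomp_i}$; thus $\cannbha{\crcomp_i}$ is a compact connected surface with non-empty boundary. By construction $\cannbha{\crcomp_i}$ is not a $2$-disk, and the classification of compact connected surfaces with non-empty boundary leaves only $\chi\le 0$.

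Next I will dispose of the degenerate case: if $\cannbha{\crcomp_j}=\Mman$ for some $j$, then Lemma~\ref{lm:properties_of_atoms}(i), which forces distinct canonical neighbourhoods with non-$2$-disk total space to be disjoint, yields $r=1$, so $\chi(\Mman)=\chi(\cannbha{\crcomp_j})$ and the equivalence is tautological. Otherwise every term in \eqref{equ:chiM_sum_chiNKi} is non-positive by the previous paragraph. Then the implication (2)$\Rightarrow$(1) is immediate, since a sum of non-positive numbers at least one of which is strictly negative is strictly negative; and (1)$\Rightarrow$(2) is equally immediate, since a finite sum of non-positive numbers that is strictly negative must contain a strictly negative summand.

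The only step that needs any care is the verification that $\cannbha{\crcomp_i}\neq\Mman$ implies $\partial\cannbha{\crcomp_i}\neq\varnothing$; I expect this to be the main (and essentially the only) technical point, but it is forced by the fact that a compact subsurface of a connected surface which has empty manifold boundary must coincide with a connected component of the ambient surface.
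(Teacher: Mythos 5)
Your proof is correct. For (1)$\Rightarrow$(2) you do exactly what the paper does: read off a strictly negative summand from the additivity formula \eqref{equ:chiM_sum_chiNKi}. For (2)$\Rightarrow$(1) the paper simply invokes Corollary~\ref{cor:incompr_chiM_chiN} (incompressibility of $\cannbha{\crcomp_i}$ gives $\chi(\Mman)\le\chi(\cannbha{\crcomp_i})<0$), whereas you re-derive the needed inequality directly from \eqref{equ:chiM_sum_chiNKi}: each $\cannbha{\crcomp_i}$, $i\le r$, is either all of $\Mman$ (in which case Lemma~\ref{lm:properties_of_atoms}(i) forces $r=1$ and the equivalence is vacuous) or a compact connected surface with non-empty boundary that is not a $2$-disk, hence has $\chi\le 0$, so a negative summand makes the whole sum negative. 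The two routes are close in substance --- the proof of Corollary~\ref{cor:incompr_chiM_chiN} ultimately rests on the same ``no disk pieces, so every complementary piece has non-positive Euler characteristic'' observation --- but yours is more self-contained, bypasses the incompressibility machinery entirely, and has the small merit of explicitly handling the degenerate case $\cannbha{\crcomp_j}=\Mman$, which the paper passes over in silence.
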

\begin{proof}
(1)$\Rightarrow$(2).
As $\chi(\Mman)<0$, we get from\;\eqref{equ:chiM_sum_chiNKi} that $\chi(\cannbha{\crcomp_i})<0$ for some $i$.

The implication (2)$\Rightarrow$(1) follows from Corollary\;\ref{cor:incompr_chiM_chiN}.
\end{proof}

\begin{corollary}\label{cor:concomp-M-Rl0}
Let $\crcomp_1,\ldots,\crcomp_k$ be all the critical components of $\partf$ whose canonical neighbourhoods have negative Euler characteristic and $\regnbha{\crcomp_1},\ldots,\regnbha{\crcomp_k}$ be their atoms.
Put $\regnbhNDn:=\cup_{i=1}^{k}\regnbha{\crcomp_i}$.
If $\regnbhNDn\not=\varnothing$, then every connected component $B$ of $\overline{\Mman\setminus\regnbhNDn}$ is either a $2$-disk, or a cylinder, or a M\"obius band.
\end{corollary}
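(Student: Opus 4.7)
The plan is to exploit the structural decomposition of $\Mman$ recalled in the paragraph preceding Lemma~\ref{lm:cannhb_chi_neq}. Since $\regnbhNDn\neq\varnothing$ forces $\chi(\Mman)<0$ by that lemma, $\Mman$ is not a $2$-disk, and one has $\Mman=\bigl(\bigcup_{i=1}^{r}\cannbha{\crcomp_i}\bigr)\cup(\text{cylinders})$, where the cylinders are the connected components of $\overline{\Mman\setminus\bigcup_{i=1}^{r}\cannbha{\crcomp_i}}$ (each diffeomorphic to $S^1\times I$) and the canonical neighbourhoods are pairwise disjoint. For $i>k$ the neighbourhood $\cannbha{\crcomp_i}$ is incompressible, is not a $2$-disk, has $\chi=0$, and cannot be closed (otherwise it would equal $\Mman$, contradicting $\chi(\Mman)<0$); so by the classification of surfaces it is a $2$-cylinder or a M\"obius band.

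Removing $\interior\regnbhNDn=\bigcup_{i=1}^{k}\interior\regnbha{\crcomp_i}$ from this decomposition exhibits $\overline{\Mman\setminus\regnbhNDn}$ as a union, glued along circles, of three types of pieces: {\rm(P1)} the individual $2$-disks forming $\cannbha{\crcomp_i}\setminus\regnbha{\crcomp_i}$ for $i\leq k$; {\rm(P2)} the cylinders or M\"obius bands $\cannbha{\crcomp_i}$ for $i>k$; and {\rm(P3)} the connecting cylinders from $\overline{\Mman\setminus\bigcup_{i=1}^{r}\cannbha{\crcomp_i}}$. The key combinatorial observation is that every piece of type {\rm(P1)} meets the rest of $\overline{\Mman\setminus\regnbhNDn}$ only along its unique boundary circle, which lies in $\partial\regnbha{\crcomp_i}\subset\partial\regnbhNDn$, so it is not glued to any other piece of the complement. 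Hence if a connected component $B$ contains a piece of type {\rm(P1)}, then $B$ coincides with that $2$-disk, and we are done in this case.

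In the remaining case $B$ is assembled from pieces of types {\rm(P2)} and {\rm(P3)} only. Each such piece has Euler characteristic $0$, and since gluings occur along circles, additivity yields $\chi(B)=0$. Next I would rule out $\partial B=\varnothing$: otherwise $B$ is open and closed in $\Mman$, hence $B=\Mman$ by connectedness, forcing $\regnbhNDn\subset B\subset\overline{\Mman\setminus\regnbhNDn}$ and so $\regnbhNDn=\varnothing$, contrary to hypothesis. Thus $B$ is a compact connected surface with $\chi(B)=0$ and non-empty boundary, and the classification of surfaces gives that $B$ is a $2$-cylinder or a M\"obius band.

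The main obstacle is the bookkeeping justifying the second paragraph: one must check that every $2$-disk of type {\rm(P1)} actually lies in $\overline{\Mman\setminus\regnbhNDn}$ and is isolated from the other $\regnbha{\crcomp_j}$ with $j\leq k$. This rests on showing that no $\regnbha{\crcomp_j}$ with $j\leq k$ is contained in a $2$-disk of $\Mman$: since $\chi(\cannbha{\crcomp_j})<0$ implies $\pi_1\cannbha{\crcomp_j}\neq 1$ and $\cannbha{\crcomp_j}$ is incompressible by Lemma~\ref{lm:charact-incompr}, the inclusion homomorphism $\pi_1\regnbha{\crcomp_j}\to\pi_1\Mman$ factors through the injection $\pi_1\cannbha{\crcomp_j}\hookrightarrow\pi_1\Mman$ preceded by the surjection $\pi_1\regnbha{\crcomp_j}\twoheadrightarrow\pi_1\cannbha{\crcomp_j}$, so it is nontrivial and Corollary~\ref{cor:incompr_pi1_incl_homomorphism} applies.
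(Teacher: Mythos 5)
Your argument is correct, but it takes a genuinely different route from the paper's. The paper's proof is short and indirect: it first observes that every non-disk component $B$ of $\overline{\Mman\setminus\regnbhNDn}$ is incompressible (Corollary~\ref{cor:incompr_complement}), then supposes $\chi(B)<0$ and applies Lemma~\ref{lm:cannhb_chi_neq} to the restriction $\func|_{B}$ to produce a critical component $\crcomp\subset B$ whose canonical neighbourhood $\cannbh$ (taken in $B$) has negative Euler characteristic; since $\cannbh$ is then incompressible in $\Mman$, it coincides with the canonical neighbourhood taken in $\Mman$, so its atom should already have been absorbed into $\regnbhNDn$ --- a contradiction. Hence $\chi(B)\geq0$, and the classification of compact surfaces with non-empty boundary finishes the argument. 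You instead make the decomposition behind~\eqref{equ:chiM_sum_chiNKi} explicit, isolate the $2$-disk components swallowed by the $\cannbha{\crcomp_i}$ with $i\leq k$, and compute $\chi(B)=0$ for the remaining components by additivity over pieces glued along circles. Both proofs are sound. The paper's is more economical but hides a small delicate step (identifying canonical neighbourhoods of $\func|_{B}$ with those of $\func$, and tacitly invoking the surface classification at the end); yours is more elementary and self-contained, and as a bonus it tells you exactly which components are $2$-disks, at the price of the bookkeeping in your last paragraph --- which you justify correctly via Corollary~\ref{cor:incompr_pi1_incl_homomorphism}, exactly the fact the paper itself uses in the first line of its proof. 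Two cosmetic remarks: your appeal to Lemma~\ref{lm:cannhb_chi_neq} to get $\chi(\Mman)<0$ can be replaced by a direct use of Corollary~\ref{cor:incompr_chiM_chiN} applied to $\cannbha{\crcomp_1}$, and the pieces of type {\rm(P1)} are more precisely the closures of the components of $\cannbha{\crcomp_i}\setminus\regnbha{\crcomp_i}$, i.e.\ the $2$-disk components of $\overline{\Mman\setminus\regnbha{\crcomp_i}}$.
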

\begin{proof}
Since the homomorphism $\pi_1\regnbha{\crcomp_i}\to\pi_1\Mman$ is non-trivial for each $i$, it follows from Corollary\;\ref{cor:incompr_complement} that $B$ is incompressible.
Suppose $\chi(B)<0$.
Notice that $\func$ takes constant values of $\partial B$.
Then by Lemma\;\ref{lm:cannhb_chi_neq} there exists a critical component $\crcomp \subset B$ of $\partf$ such that the canonical neighbourhood $\cannbh$ of $\crcomp$ with respect to $\func|_{B}$ has negative Euler characteristic.
It follows that the homomorphisms $\pi_1\cannbh \to \pi_1 B \to \pi_1 \Mman$ induced by the inclusions $\cannbh\subset B \subset \Mman$ are monomorphisms, so $\cannbh$ is incompressible in $\Mman$.
This implies that $\cannbh$ is a canonical neighbourhood of $\crcomp$ with respect to $\func$.
But since $\chi(\cannbh)<0$, we should have that $\cannbh\subset\regnbhNDn$, which contradicts to the assumption.
\end{proof}

\section{Deformations of incompressible subsurfaces}\label{sect:deform-incompr-surf}
The aim of this section is to extend some results of~\cite{JacoShalen:Topology:1977} concerning incompressible subsurfaces, see Proposition\;\ref{pr:incompr-surf-homotopy}.
\subsection{$\pm$-twist.}
Let $\gamma\subset\Int\Mman$ be a two-sided simple closed curve, $\Uman$ be its regular neighbourhood diffeomorphic to $S^1\times[-1,1]$ so that $\gamma$ correspond to $S^1\times 0$.
Take a function $\mu:[-1,1]\to[0,1]$ such that $\mu=0$ near $\{\pm1\}$ and $\mu=1$ on some neighbourhood of $0$.
Define the following homeomorphism $g_{\gamma}:\Mman\to\Mman$ by
\begin{equation}\label{equ:plus-minus-twist}
g_{\gamma}(x)=
\begin{cases}
(z\, e^{2\pi i \mu(t)}, t), & x=(z,t)\in S^1\times[-1,1]\cong\Uman \\
x, & x\in\Mman\setminus\Uman,
\end{cases}
\end{equation}
see Figure\;\ref{fig:pmtwist}.
Then $g_{\gamma}$ is fixed on some neighbourhood of $\overline{\Mman\setminus\Uman}$ and isotopic to $\id_{\Mman}$ via an isotopy supported in $\Int\Uman$.
Evidently, $g_{\gamma}$ is a product of Dehn twists in opposite directions along the curves parallel to $\gamma$.
Therefore we will call $g_{\gamma}$ a \myemph{$\pm$-twist near $\gamma$}.
\begin{figure}[ht]
\includegraphics[height=2cm]{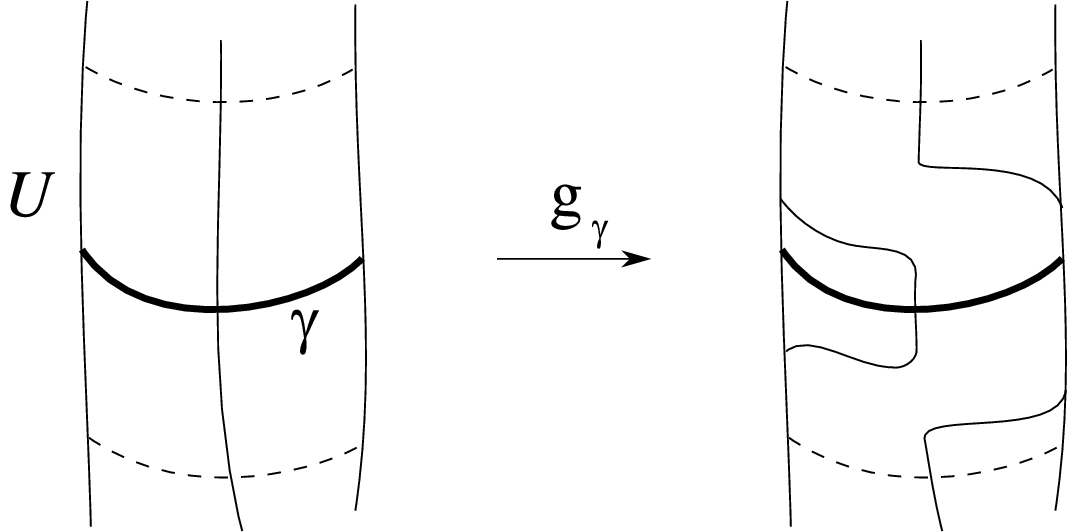}
\caption{$\pm$-twist}\protect\label{fig:pmtwist}
\end{figure}

The following lemma is a particular case of \cite[Lm.~6.1]{Epstein:AM:1966}.
\begin{lemma}\label{lm:invar_curves_under_homotopy}{\rm\cite[Lm.~6.1]{Epstein:AM:1966}.}
Suppose $\chi(\Mman)<0$.
Let $\gamma\subset\Int\Mman$ be a simple closed curve which does not bound a $2$-disk nor a M\"obius band, $\dif:\Mman\to\Mman$ be a homeomorphism homotopic to $\id_{\Mman}$ and such that $\dif(\gamma)=\gamma$.
Let also $\Hdif:\Mman\times I\to\Mman$ be any homotopy of $\id_{\Mman}$ to $\dif$.
Then there exists another homotopy $\Gdif_t:\Mman\times I\to\Mman$ of $\id_{\Mman}$ to $\dif$ such that $\Gdif_t(\gamma)=\gamma$ and $\Gdif_t=\Hdif_t$ on $\overline{\Mman\setminus\Uman}$ for all $t\in I$.

Moreover, there exists $m\in\ZZZ$ and a homotopy $\Gdif':\Mman\times I\to\Mman$ of $\id_{\Mman}$ to $g_{\gamma}^{m}\circ\dif$ such that $\Gdif'_t=\Gdif$ outside $\Uman$ and $\Gdif'_t$ is fixed on $\gamma$ for all $t\in I$.
\end{lemma}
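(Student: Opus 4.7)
The plan is to perform two successive modifications of the given homotopy, one for each conclusion.

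For the first assertion I view the track $\{\Hdif_t(\gamma)\}_{t\in I}$ as a continuous one-parameter family of simple closed curves in $\Mman$, all freely homotopic to $\gamma$ and with $\Hdif_0(\gamma)=\Hdif_1(\gamma)=\gamma$. Since $\chi(\Mman)<0$ and $\gamma$ bounds neither a disk nor a M\"obius band, the parametrized isotopy theorem for essential simple closed curves---which is precisely the form of Epstein's Lemma 6.1 cited---produces a continuous family of self-homeomorphisms $\Phi_t\colon\Mman\to\Mman$ with $\Phi_0=\Phi_1=\id$, supported in $\Uman$, and satisfying $\Phi_t(\Hdif_t(\gamma))=\gamma$ for every $t$. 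Setting $\Gdif_t:=\Phi_t\circ\Hdif_t$ yields a homotopy with the desired endpoints, $\Gdif_t(\gamma)=\gamma$, and $\Gdif_t=\Hdif_t$ outside $\Uman$ (because $\Phi_t$ is supported there).

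For the second assertion the restriction $r_t:=\Gdif_t|_\gamma$ is a path in $\mathrm{Homeo}^{+}(\gamma)$ from $\id_\gamma$ to $\dif|_\gamma$. Because $\gamma\cong S^1$ and $\mathrm{Homeo}^{+}(S^1)$ is homotopy equivalent to $S^1$, so that $\pi_1\cong\ZZZ$, the path $r_t$ carries a well-defined winding number relative to its endpoints. The key observation, read off directly from formula~\eqref{equ:plus-minus-twist}, is that the $\pm$-twist $g_\gamma$ is connected to $\id_\Mman$ by an isotopy supported in $\Uman$ whose restriction to $\gamma=S^1\times\{0\}$ performs exactly one full rotation; hence $g_\gamma^m$ is connected to $\id_\Mman$ by an analogous isotopy tracing $m$ full rotations on $\gamma$. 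Choosing $m\in\ZZZ$ to cancel the winding of $r_t$ and concatenating with $\Gdif$ gives a homotopy from $\id_\Mman$ to $g_\gamma^m\circ\dif$ whose restriction to $\gamma$ is null-homotopic rel endpoints in $\mathrm{Homeo}^{+}(\gamma)$. A standard argument straightening a null-homotopic path in the circle to the constant path, realised by a deformation supported in a thin collar of $\gamma$ inside $\Uman$, then produces $\Gdif'_t$ that is fixed on $\gamma$ and agrees with $\Gdif_t$ outside $\Uman$.

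The main obstacle is the first step: extracting from Epstein's theorem a parametrized family $\Phi_t$ that simultaneously achieves $\Phi_t(\Hdif_t(\gamma))=\gamma$, has support confined to $\Uman$, and reduces to the identity at both endpoints $t=0,1$. Two technical points must be watched along the way: $\dif$ is only assumed homotopic---not isotopic---to $\id_\Mman$, so the whole argument must be phrased homotopically rather than through ambient isotopies, and when $\dif$ reverses the orientation of $\gamma$ (possible on a non-orientable $\Mman$) the sign and parity of the twist $g_\gamma^m$ and the corresponding winding calculation require consistent bookkeeping. Once the first step is in place, the second is essentially a computation in $\pi_1(\mathrm{Homeo}^{+}(S^1))\cong\ZZZ$.
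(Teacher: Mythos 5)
The paper offers no proof of this lemma --- it is quoted as a special case of Epstein's Lemma~6.1 --- so your proposal has to stand on its own, and it has a genuine gap at precisely the step you flag as ``the main obstacle.'' The first conclusion concerns a \emph{homotopy}: for $0<t<1$ the maps $\Hdif_t\colon\Mman\to\Mman$ are merely continuous, so $\Hdif_t(\gamma)$ need not be a simple closed curve at all (it can be a point, a non-embedded loop, etc.), and no ``parametrized isotopy theorem for essential simple closed curves'' applies to the track $\{\Hdif_t(\gamma)\}$. Even if $\Hdif$ were an isotopy, your correcting family $\Phi_t$ cannot exist: $\Hdif_t(\gamma)$ will in general leave $\Uman$, and a homeomorphism supported in $\Uman$ fixes every point of $\Hdif_t(\gamma)\setminus\Uman$, so $\Phi_t(\Hdif_t(\gamma))$ cannot equal $\gamma\subset\Uman$. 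More structurally, post-composition $\Gdif_t=\Phi_t\circ\Hdif_t$ can never give ``$\Gdif_t=\Hdif_t$ on $\overline{\Mman\setminus\Uman}$'' unless $\Phi_t$ is the identity on $\Hdif_t(\Mman\setminus\Uman)$, which is essentially all of $\Mman$. That condition constrains the \emph{domain}: one must redefine the values of $\Hdif$ at points of $\Uman\times I$, keeping it on $\partial\Uman\times I$ and at $t=0,1$, not post-compose with ambient homeomorphisms. The real content of Epstein's lemma is homotopy-theoretic: deform $\Hdif|_{\gamma\times I}$ rel $\gamma\times\{0,1\}$ to a homotopy with image in $\gamma$, using that $\Mman$ is aspherical and that the centralizer of $[\gamma]$ in $\pi_1\Mman$ is infinite cyclic --- this is exactly where $\chi(\Mman)<0$ and the hypothesis that $\gamma$ bounds neither a $2$-disk nor a M\"obius band are used --- and then extend over $\Uman\times I$ rel $\partial\Uman\times I$ by asphericity. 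None of this appears in your outline, and you explicitly leave the step unresolved.

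The second half is essentially the right idea: once $\Gdif_t(\gamma)=\gamma$ for all $t$, the restrictions form a path of degree-one self-maps of the circle (again, not of homeomorphisms --- the intermediate maps need not be invertible, but the space of degree-one maps $S^1\to S^1$ also has fundamental group $\ZZZ$), the obstruction to making this path constant lives in that $\ZZZ$, and it is killed by composing with $g_{\gamma}^{m}$ together with its canonical isotopy supported in $\Uman$. Note also that your bookkeeping should record that the hypotheses force $\dif|_{\gamma}$ to preserve orientation (otherwise no homotopy $\gamma\times I\to\gamma$ from $\id_\gamma$ to $\dif|_\gamma$ exists, since degree is a homotopy invariant); this again follows from $\dif\simeq\id_{\Mman}$ and the structure of the centralizer of $[\gamma]$. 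So the second step would go through once the first is repaired along the lines above.
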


The following statement is also well-known.

\begin{lemma}\label{lm:dehn-twists-along-boundary}
Let $\Mman$ be a surface with $\chi(\Mman)<0$.
Suppose $\partial\Mman\not=\varnothing$ and let $\gamma_1,\ldots,\gamma_l$ be all the connected components of $\partial\Mman$.
For each $i=1,\ldots,l$ let $\tau_i$ be a Dehn twist along the curve parallel to $\gamma_i$ and fixed on $\partial\Mman$.
Let $m_1,\ldots,m_l\in\ZZZ$ be integer numbers not of all are equal to zero.
Then the homeomorphism $\tau_1^{m_1}\circ\cdots\circ\tau_l^{m_l}$ is \myemph{not homotopic to $\id_{\Mman}$} via a homotopy fixed on $\partial\Mman$.
\end{lemma}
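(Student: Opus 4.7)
Suppose for contradiction that there is a homotopy from $\id_\Mman$ to $\Gdif:=\tau_1^{m_1}\circ\cdots\circ\tau_l^{m_l}$ fixed on $\partial\Mman$, and let $i$ be an index with $m_i\ne 0$. Without loss of generality each $\tau_j$ is supported in a collar $\Uman_j\cong\gamma_j\times[0,1]$ of $\gamma_j$, and the collars are pairwise disjoint. My plan is to exhibit a properly embedded arc on which $\Gdif$ acts non-trivially up to homotopy rel endpoints, using the universal cover to produce the obstruction.

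First I would choose $\alpha\colon I\to\Mman$ starting at some $\alpha(0)\in\gamma_i$, ending at some $\alpha(1)\in\partial\Mman$, and with interior contained in $\Int\Mman\setminus\bigcup_{j\ne i}\Uman_j$, meeting $\partial\Uman_i$ transversely in exactly one point. Such an arc exists because $\Mman\setminus\bigcup_{j\ne i}\Int\Uman_j$ is a connected subsurface of $\Mman$ containing $\gamma_i$ in its boundary, and since $\chi(\Mman)<0$ it is neither a disk nor an annulus and therefore admits essential properly embedded arcs emanating from $\gamma_i$. From the standard product description of a Dehn twist, $\Gdif(\alpha)=\tau_i^{m_i}(\alpha)$ agrees with $\alpha$ outside $\Uman_i$ but near $\alpha(0)$ spirals $m_i$ full turns along $\gamma_i$. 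Composing the hypothetical homotopy with $\alpha$ yields a homotopy rel endpoints between $\alpha$ and $\Gdif(\alpha)$ in $\Mman$.

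Next I would pass to the universal cover $\pi\colon\tMman\to\Mman$. Since $\chi(\Mman)<0$ and $\partial\Mman\ne\varnothing$, $\Mman$ admits a hyperbolic structure with geodesic boundary, so $\tMman$ embeds as a closed convex subset of $\mathbb{H}^2$; in particular each lift $\tilde\gamma_i$ of $\gamma_i$ is a geodesic line of $\partial\tMman$ whose stabilizer in the deck group is the infinite cyclic subgroup $\langle c_i\rangle<\pi_1(\Mman)$ generated by the class of $\gamma_i$. Fix a lift $\tilde\alpha$ starting at some $\tilde p\in\tilde\gamma_i$, and let $\widetilde{\Gdif(\alpha)}$ be the lift of $\Gdif(\alpha)$ starting at $\tilde p$; by path-lifting of the homotopy rel endpoints, these two lifts must have the same terminal point. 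On the other hand, a direct calculation inside the component of $\pi^{-1}(\Uman_i)$ meeting $\tilde\gamma_i$ shows that $\widetilde{\Gdif(\alpha)}$ exits this collar at $c_i^{m_i}$ applied to the exit point of $\tilde\alpha$; outside the collar the two lifts coincide up to the deck transformation $c_i^{m_i}$, so their terminal points differ by $c_i^{m_i}$. Since $\pi_1(\Mman)$ is free (because $\partial\Mman\ne\varnothing$) and $c_i\ne 1$, the element $c_i^{m_i}$ acts without fixed points on $\tMman$ for $m_i\ne 0$, yielding the desired contradiction.

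The main obstacle is precisely the local ``spiral'' computation inside the lifted collar: one must verify that lifting the $m_i$-fold twist along $\tilde\gamma_i$ translates the outer end of $\tilde\alpha$ by exactly $m_i$ fundamental domains of $c_i$, so that the global terminal-point discrepancy is indeed $c_i^{m_i}$ and not some other power. This is a routine check in product coordinates on $\Uman_i$ once a trivialization of each connected component of $\pi^{-1}(\Uman_i)$ is chosen, but it is the one place where the integer $m_i$ enters the argument; everything else in the plan is formal path-lifting and surface theory. If one wishes to avoid hyperbolic geometry altogether, the same contradiction can be obtained by observing that the loop $\alpha\cdot\overline{\Gdif(\alpha)}$, based near $\alpha(0)$, represents $c_i^{m_i}$ in the free group $\pi_1(\Mman)$, and must be trivial under the homotopy hypothesis.
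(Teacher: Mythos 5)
The paper offers no proof of this lemma (it is quoted as ``well-known''), so I am judging your argument on its own merits, and there is a genuine gap in it. The arc you describe does not exist: since every boundary component $\gamma_j$ is contained in its collar $\Uman_j$, an arc from $\gamma_i$ to $\partial\Mman$ whose interior avoids all $\Uman_j$ with $j\neq i$ must terminate back on $\gamma_i$, and hence crosses the frontier circle of $\Uman_i$ an even number of times, not once. Consequently the terminal-point discrepancy of the two lifts is not $c_i^{m_i}$: the second passage through (a translate of) the lifted collar contributes a conjugate $w c_i^{-m_i} w^{-1}$, so the obstruction is the commutator $c_i^{m_i}\,w\,c_i^{-m_i}\,w^{-1}$, which certainly can vanish (it does whenever $w$ centralizes $c_i$). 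The same defect appears in your closing remark that $\alpha\cdot\overline{\Gdif(\alpha)}$ represents $c_i^{m_i}$. A reliable symptom that something is wrong is that your final contradiction uses only ``$c_i^{m_i}\neq 1$ in a free group,'' which holds equally for the annulus, where the lemma is false ($\tau_1\circ\tau_2^{-1}$ \emph{is} homotopic to $\id$ rel boundary); so $\chi(\Mman)<0$ must enter the endgame, not merely the construction of the arc.

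The repair is short but essential. Take a basepoint $p\in\gamma_i$ with $m_i\neq 0$. A homotopy rel $\partial\Mman$ from $\id_{\Mman}$ to $\Gdif$ forces $\Gdif_{*}=\id$ on $\pi_1(\Mman,p)$ on the nose. Now choose a loop $x$ at $p$ of the form $\delta\rho\bar\delta$, where $\delta$ is a radial arc of $\Uman_i$ and $\rho$ avoids every collar $\Uman_j$; then $\Gdif_{*}[x]=c_i^{m_i}[x]c_i^{-m_i}$. Because $\chi(\Mman)<0$, the group $\pi_1(\Mman,p)$ is free of rank at least $2$ and $c_i\neq 1$, so the centralizer of $c_i^{m_i}$ is a proper (cyclic) subgroup and $x$ can be chosen outside it, giving $\Gdif_{*}[x]\neq[x]$ --- the desired contradiction. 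This is exactly the point where negative Euler characteristic is used and where the annulus escapes. Your hyperbolic-structure detour is unnecessary once the argument is phrased this way, but it is not the source of the error; the error is the claim that the obstruction class is $c_i^{m_i}$ rather than a commutator involving $c_i^{m_i}$.
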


\subsection{Deformations of incompressible subsurfaces.}
Let $\Mman$ be a surface distinct from the $2$-sphere $S^2$ and the projective plane $\prjplane$, $\nmanif\subset\Mman$ be an incompressible subsurface, and $\nmanif_1,\ldots,\nmanif_k$ be all of its connected components.
Let also $\dif:\Mman\to\Mman$ be a homeomorphism homotopic to $\id_{\Mman}$ and $\Hdif:\Mman\times I\to\Mman$ be any homotopy of $\id_{\Mman}$ to $\dif$.

The following Proposition\;\ref{pr:incompr-surf-homotopy} follows the line of\;\cite[Lm.\;4.2]{JacoShalen:Topology:1977}.
In fact the first part of statement (B) is a particular case of that lemma.

\begin{proposition}\label{pr:incompr-surf-homotopy}{\rm c.f.\;\cite[Lm.\;4.2]{JacoShalen:Topology:1977}}
{\rm(A)}~If $\nmanif_j$ is not a cylinder for some $j$, then $\dif(\nmanif_j)\cap\nmanif_j\not=\varnothing$.

{\rm(B)}~Suppose $\chi(\nmanif_j)<0$ and $\dif(\nmanif_j)\subset\nmanif_j$ for some $j$.
Then there exists a homotopy $\Gdif:\nmanif_j\times I\to\nmanif_j$ of the identity map $\id_{\nmanif_j}$ to the restriction $\dif|_{\nmanif_j}$ such that $\Gdif_t(x)=\Hdif_t(x)$ whenever $\Hdif(x\times I)\subset\nmanif_j$.

Moreover, suppose $\Hdif(\gamma\times I)\subset\gamma$ for each connected component $\gamma$ of $\partial\nmanif_j$.
Extend $\Gdif$ to a map $\Gdif:\Mman\times I\to\Mman$ by $\Gdif_t=\Hdif_t$ on $\Mman\setminus\nmanif_j$.
Then $\Gdif$ is a homotopy of $\id_{\Mman}$ to $\dif$.

{\rm(C)}~Suppose $\chi(\nmanif_j)<0$ and $\dif(\nmanif_j)=\nmanif_j$ for all $j=1,\ldots,k$.
Then there exists a homotopy $\Gdif:\Mman\times I\to\Mman$ of $\id_{\Mman}$ to $\dif$ such that $\Gdif(\nmanif_j\times I)\subset\nmanif_j$ for all $j=1,\ldots,k$ and $\Gdif(B\times I)\subset B$ for every connected component $B$ of $\overline{\Mman\setminus\nmanif}$.

{\rm(D)}~
Suppose $\chi(\nmanif_j)<0$ and $\dif$ is fixed on $\nmanif$ for all $j=1,\ldots,k$.
Then there exists a homotopy of $\id_{\Mman}$ to $\dif$ fixed on $\nmanif$.
\end{proposition}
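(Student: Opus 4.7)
The plan is to use part~(B) --- whose first assertion is (a particular case of) \cite[Lm.~4.2]{JacoShalen:Topology:1977} --- as the engine, deducing parts~(C) and~(D) from it with the help of Lemma~\ref{lm:invar_curves_under_homotopy} and the classical contractibility of the identity components of mapping spaces of hyperbolic surfaces with boundary. Part~(A) requires an independent argument via the surface Annulus Theorem.

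For~(A) I argue by contradiction: assume $\dif(\nmanif_j)\cap\nmanif_j=\varnothing$. Since $\nmanif_j$ is incompressible and is neither a disk nor a cylinder, one can choose an essential two-sided simple closed curve $\alpha\subset\nmanif_j$ which is non-separating in $\nmanif_j$ and represents a non-trivial class in $\pi_1\Mman$ that bounds neither a disk nor a M\"obius band. The hypothesis $\dif\simeq\id_\Mman$ implies $\alpha$ and $\dif(\alpha)\subset\dif(\nmanif_j)$ are freely homotopic in $\Mman$; being disjoint, they cobound an embedded annulus $C\subset\Mman$ by the surface Annulus Theorem. Non-separation of $\alpha$ forces $C\cap\nmanif_j$ to be a regular neighbourhood of $\alpha$ in $\nmanif_j$, so $\dif(\alpha)$ is parallel to $\alpha$ in $\Mman$. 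Iterating with a second essential curve whose class in $\pi_1\nmanif_j$ is independent of $[\alpha]$ leads to the structural contradiction that $\nmanif_j$ is itself annular; the M\"obius-band case is dispatched by passing to the orientation double cover.

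For the second half of~(B), observe that for any $x\in\partial\nmanif_j$ we have $\Hdif(x\times I)\subset\gamma\subset\nmanif_j$ by hypothesis, so the first half yields $\Gdif_t(x)=\Hdif_t(x)$ on $\partial\nmanif_j$, making the extension by $\Hdif$ on $\overline{\Mman\setminus\nmanif_j}$ continuous. For~(C), I proceed inductively on $j$. At the $j$-th step I first apply Lemma~\ref{lm:invar_curves_under_homotopy} in turn to each boundary curve $\gamma$ of $\nmanif_j$ (observing that $\chi(\Mman)<0$ by Corollary~\ref{cor:incompr_chiM_chiN} since $\chi(\nmanif_j)<0$, and that $\gamma$ bounds neither a disk nor a M\"obius band in $\Mman$), replacing $\Hdif$ by a homotopy under which each such $\gamma$ is setwise invariant. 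Then the ``moreover'' part of~(B) yields a homotopy $\Gdif$ with $\Gdif(\nmanif_j\times I)\subset\nmanif_j$. Since $\Gdif_0(B)=B$ for any complementary component $B$ and the map $t\mapsto\Gdif_t(B)$ is continuous into the discrete set of components of $\Mman\setminus\nmanif$, we obtain $\Gdif(B\times I)\subset B$ as well.

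For~(D), apply~(C) to produce a homotopy $\Gdif$ from $\id_\Mman$ to $\dif$ preserving each $\nmanif_j$. Since $\dif|_\nmanif=\id_\nmanif$, the path $t\mapsto\Gdif_t|_{\nmanif_j}$ is a loop, based at $\id_{\nmanif_j}$, in the space of continuous self-maps of $\nmanif_j$. As $\chi(\nmanif_j)<0$ and $\nmanif_j$ has non-empty boundary, the identity component of this space is contractible \cite{EarleEells:DG:1970,Gramain:ASENS:1973}, so the loop contracts rel $\{0,1\}$. Absorbing the contracting homotopy into $\Gdif$ on a collar of each $\nmanif_j$ yields a homotopy of $\id_\Mman$ to $\dif$ that is fixed on~$\nmanif$ throughout. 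The main obstacle is the careful execution of~(A): the choice of essential curves and the M\"obius-band subcase need attention, while the remaining steps amount to a routine chaining of Epstein's lemma, part~(B), and the contractibility of identity components of mapping spaces of compact hyperbolic surfaces with boundary.
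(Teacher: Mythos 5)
There are genuine gaps. The most serious one is in part (C): your claim that $\Gdif(B\times I)\subset B$ because ``$t\mapsto\Gdif_t(B)$ is continuous into the discrete set of components of $\Mman\setminus\nmanif$'' does not work. On the complement of $\nmanif$ the extended homotopy equals the original $\Hdif$, and $\Hdif_t$ for $t\in(0,1)$ is merely a continuous map, not a homeomorphism preserving $\nmanif$; hence $\Gdif_t(B)$ is just a connected subset of $\Mman$ which may perfectly well meet $\nmanif$ and several complementary components at intermediate times. It is not an element of $\pi_0(\Mman\setminus\nmanif)$, so there is no discreteness to exploit. The correct step (and the one the paper takes) is to observe that each complementary component $B$ is itself incompressible by Corollary~\ref{cor:incompr_complement} and to apply the first, ``retraction'', half of (B) to $B$, modifying the homotopy on $B\times I$. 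Part (A) is also not actually proved. First, an incompressible $\nmanif_j$ with $\chi(\nmanif_j)<0$ can be planar (e.g.\ a pair of pants), in which case every simple closed curve separates, so your non-separating $\alpha$ need not exist. Second, the decisive step --- ``iterating with a second essential curve \dots\ leads to the structural contradiction that $\nmanif_j$ is itself annular'' --- is an assertion, not an argument: knowing that $\alpha$ and $\dif(\alpha)$ cobound an annulus (i.e.\ are parallel) does not by itself force $\nmanif_j$ to be an annulus. The paper proves (A) and the first half of (B) simultaneously by passing to the covering $p_j:\tMman_j\to\Mman$ associated with $\pi_1\nmanif_j$, where $\nmanif_j$ lifts to a deformation retract $\tnmanif_j$ whose complement is a union of open annuli; if $\dif(\nmanif_j)\cap\nmanif_j=\varnothing$ the lifted image of $\nmanif_j$ lands in one such annulus and incompressibility forces $\nmanif_j$ to be an annulus. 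You need this covering-space setup in any case, since the retraction $p_j\circ r_j\circ\tHdif$ is what produces the homotopy $\Gdif$ in (B) and what repairs (C).

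Part (D) as you propose it is a genuinely different route from the paper, which instead composes $\dif$ with $\pm$-twists and uses Lemma~\ref{lm:invar_curves_under_homotopy} together with Lemma~\ref{lm:dehn-twists-along-boundary} to show the twisting exponents vanish. Your idea is viable in principle, but two points need care. The contractibility you invoke must be that of the identity component of the space of \emph{continuous} self-maps of $\nmanif_j$; Earle--Eells and Gramain concern diffeomorphism groups, and the relevant fact here is Gottlieb's theorem that for an aspherical complex $X$ one has $\pi_1$ of the identity component of the mapping space equal to the center of $\pi_1 X$, which is trivial when $\chi(\nmanif_j)<0$. Moreover, absorbing the null-homotopy on a collar taken \emph{inside} $\nmanif_j$ produces a homotopy that is stationary only off that collar, not on all of $\nmanif$ as the statement requires; you would still have to remove the residual motion of the collar, which is essentially the boundary-twisting issue that the paper's Lemma~\ref{lm:dehn-twists-along-boundary} is designed to handle.
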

\begin{proof}
First we make the following remark which repeats the key arguments of\;\cite[Lm.\;4.2]{JacoShalen:Topology:1977}.
For $j=1,\ldots,k$ let $p_j:\tMman_j\to\Mman$ be the covering map corresponding to the subgroup $\pi_1\nmanif_j$ of $\pi_1\Mman$.
Then the embedding $i:\nmanif_j\subset\Mman$ lifts to the embedding $i^{*}:\nmanif_j\to\tMman_j$ which induces an isomorphism between $\pi_1\nmanif_j$ and $\pi_1\tMman_j$.
Denote $\tnmanif_j=i^{*}(\nmanif_j)$.
Then we have the following commutative diagram:
$$
\xymatrix{
\tnmanif_j\; \ar@{^{(}->}[rr] & & \tMman_j \ar[d]^{p_j}\\
\nmanif_j\; \ar[u]^{\cong} \ar[urr]^{i^{*}} \ar[rr]^{i}  & & \Mman
}
$$
Since $\tMman_j$ and $\nmanif_j$ are aspherical, it follows from Whitehead's theorem that $\tnmanif_j$ is a strong deformation retract of $\tMman_j$.
Then every connected component of $\Int(\tMman_j\setminus\tnmanif_j)$ is an open cylinder.
Let $\Hdif:\nmanif_j\times I\to\Mman$ be any homotopy between the identity embedding $\Hdif_0=i:\nmanif_j\subset\Mman$ and $\Hdif_1=\dif|_{\nmanif_j}$.
Then there exists a lifting $\tHdif:\nmanif_j\times I\to\tMman_j$ such that $\tHdif_0=i^{*}$ and $p_j\circ\tHdif=\Hdif$.
Denote $\tnmanif'_j=\tHdif_1(\nmanif_j)$.
Since both $\tnmanif_j$ and $\tnmanif'_j$ are deformational retracts of $\tMman_j$, they are incompressible in $\tMman_j$.

\medskip 

(A) Suppose $\dif(\nmanif_j)\cap\nmanif_j=\varnothing$.
Then $\Int(\tnmanif'_j)$ is included into some connected component $C$ of $\Int(\tMman_j\setminus\tnmanif_j)$ being a cylinder.
Since $\tnmanif'_j$ is incompressible in $\Mman$, it is also incompressible in $C$, whence $\tnmanif'_j$ and therefore $\nmanif_j$ are cylinders.
Thus if $\nmanif_j$ is not a cylinder, then we obtain that $\dif(\nmanif_j)\cap\nmanif_j\not=\varnothing$.

\medskip 

(B) Let $r_j:\tMman_j\to\tnmanif_j$ be any retraction.
Then the map $$\Gdif = p_j \circ r_j \circ \tHdif \ : \ \nmanif_j\times I\;\to\;\nmanif_j$$ is a homotopy of $\id_{\nmanif_j}$ to $\dif|_{\nmanif_j}$ in $\nmanif_j$.
It is easy to see that $\Gdif_t(x)=\Hdif_t(x)$ whenever $\Hdif(x\times I)\subset\nmanif_j$.

Suppose that $\Hdif(\gamma\times I)\subset\gamma\subset\nmanif_j$ for each connected component $\gamma$ of $\partial\nmanif_j$.
Then by the construction $\Gdif_t=\Hdif_t$ on $\partial\nmanif_j$.
Notice that $\partial\nmanif_j$ separates $\Mman$.
Extend $\Gdif$ to all of $\Mman\times I$ by $\Gdif=\Hdif$ of $(\Mman\setminus\nmanif_j)\times I$.
Then $\Gdif$ is continuous, $\Gdif_0=\id_{\Mman}$ and $\Gdif_1=\dif$.

\medskip 

(C)
Suppose $\chi(\nmanif_j)<0$ and $\dif(\nmanif_j)=\nmanif_j$ for all $j=1,\ldots,k$.
Let $\gamma_1,\ldots,\gamma_l$ be all the connected components of $\partial\nmanif$.
Since $\nmanif$ is incompressible, we have by Corollary\;\ref{cor:incompr_chiM_chiN} that $\chi(\Mman)\leq\chi(\nmanif_j)<0$ as well.
Moreover, by (B) for each $j$ the restriction $\dif|_{\nmanif_j}$ is a homeomorphism of $\nmanif_j$ homotopic in $\nmanif_j$ to $\id_{\nmanif_j}$.
This, in particular, implies that $\dif(\gamma_i)=\gamma_i$ for $i=1,\ldots,l$.

Then by Lemma\;\ref{lm:invar_curves_under_homotopy} we can suppose that $\Hdif(\gamma_i\times I)\subset\gamma_i$ for all $i=1,\ldots,l$ as well.
Moreover, due to (B) it can be additionally assumed that $\Hdif(\nmanif_j\times I)\subset \nmanif_j$.

Let $B$ be a connected component of $\overline{\Mman\setminus\nmanif}$.
Since $\nmanif$ is incompressible, $B$ is not a $2$-disk.
Then by Corollary\;\ref{cor:incompr_complement} $B$ is incompressible.
Therefore we can apply statement (B) to $B$ and change the homotopy $\Gdif$ on $B\times I$ so that $\Gdif(B\times I)\subset B$.

\medskip 

(D) Suppose $\dif$ is fixed on $\nmanif$.
For each $i$ let $\Uman_i$ be a regular neighbourhood of $\gamma_i$, and $g_i$ be a $\pm$-twist near $\gamma_i$ supported in $\Uman_i$.
We can assume that $\Uman_i\cap\Uman_j=\varnothing$ for $i\not=j$.
Then by Lemma\;\ref{lm:invar_curves_under_homotopy} there exist integer numbers $m_1,\ldots,m_l\in\ZZZ$ and a homotopy $\Gdif:\Mman\times I\to\Mman$ of $\id_{\Mman}$ to a homeomorphism $\dif':=g_1^{m_1}\circ\cdots\circ g_{l}^{m_l}\circ\dif$ such that $\Gdif_t$ is fixed on $L$ for each $t\in I$.
By (C) we can also assume that $\Gdif(\nmanif_j\times I)\subset\nmanif_j$ and $\Gdif(B\times I)\subset B$ for every connected component of $\overline{\Mman\setminus\nmanif}$ and each $j=1,\ldots,k$.

In particular, we see that the restriction $\dif'|_{\nmanif}$ is homotopic to $\id_{\nmanif}$ relatively $\partial\nmanif$.
But this restriction is evidently a product of Dehn twists along boundary components of $\nmanif$.
Since $\chi(\nmanif_j)<0$ for all $j$, we get from Lemma\;\ref{lm:dehn-twists-along-boundary} that $m_i=0$ for all $i=1,\ldots,l$.
Hence $\dif'=\dif$.
Thus $\Gdif$ is in fact a homotopy between $\id_{\Mman}$ and $\dif$ relatively $\partial\nmanif$.
Since $\partial\nmanif$ separates $\Mman$, and $\id_{\Mman}$ and $\dif$ are fixed on $\nmanif$, we can change $\Gdif$ on $\nmanif\times I$ by $\Gdif_t(x)=x$.
This gives a homotopy between $\id_{\Mman}$ and $\dif$ relatively to $\nmanif$.
\end{proof}

\section{Automorphisms of cellular subdivisions}\label{sect:cell-auto}
Let $\nmanif$ be a compact surface and $\cwpart=\{e_{\lambda}\}_{\lambda\in\Lambda}$ be some partition of $\nmanif$ into a disjoint family of connected orientable submanifolds.
Say that a homeomorphism $\dif:\nmanif\to\nmanif$ is a \myemph{$\cwpart$-homeomorphism} provided it yields a permutation of elements of $\cwpart$, that is for each $e\in\cwpart$ its image $\dif(e)$ also belongs to $\cwpart$.
An element $e\in\cwpart$ will be called \myemph{$\dif$-invariant\/} if $\dif(e)=e$.
Say that $e$ is \myemph{$\dif^{+}$-invariant} (\myemph{$\dif^{-}$-invariant}) if the restriction $\dif|_{e}:e \to e$ is a preserving (reversing) orientation map.
We will also say that $\dif$ is \myemph{$\cwpart$-trivial} if each $e\in\cwpart$ is $\dif^{+}$-invariant.

\begin{remark}\label{rem:no-h-minus-0-cells}\rm
Notice that we can say that a map $\dif:e \to e$ preserves or reverses orientation only if $\dim e \geq 1$.
To each $0$-dimensional element $e\in\cwpart$ (being of course a point) we formally assign a ``positive orientation'' and assume that \myemph{by definition every cellular homeomorphism preserves orientation of each invariant $0$-element of $\cwpart$.}
\end{remark}

\begin{example}\rm
Let $\Mman$ be a connected surface and $\kgraph\subset\Int\Mman$ be an embedded finite connected graph.
Assume that $\kgraph$ is a subcomplex of $\Mman$ with respect to some triangulation of $\Mman$.
By $\regnbhg$ we will denote a regular neighbourhood of $\kgraph$.
Following~\cite{JacoShalen:Topology:1977} define a \myemph{canonical\/} neighbourhood $\cannbhg$ of $\kgraph$ to be the union of a regular neighbourhood $\regnbhg$ of $\kgraph$ with those connected components of $\Mman\setminus\regnbhg$ which are $2$-disks.
Notice that $\cannbhg\setminus\kgraph$ is a disjoint union of open $2$-disks and half-open cylinders $S^1\times(0,1]$ with $S^1\times\{1\}$ corresponding boundary components of $\partial\cannbhg$.
Thus we obtain a natural partition of $\cannbhg$ by vertexes and edges of $\crcomp$ and connected components of $\cannbhg\setminus\kgraph$.
We shall denote this partition by $\cwpartg$.
\end{example}

Now let $\cwpart$ be a cellular subdivision of $\nmanif$.
Denote by $\nmanif_i$ $(i=0,1,2)$ the $i$-th skeleton of $\nmanif$.
In particular, $\nmanif_1$ is a finite connected subgraph in $\nmanif$ such that $\nmanif\setminus\nmanif_1$ is a disjoint union of $2$-disks.
Let $c_i$ $(i=0,1,2)$ be the total number of $i$-cells of $\Delta$.
Then of course $\chi(\nmanif) = c_0-c_1+c_2.$

Let $\chains=\{\chains_i,\partial_i\}$ be the $\RRR$-chain complex of $\nmanif$ corresponding to a given cellular subdivision.
Thus $\chains_i$ is a real vector space of dimension $c_i$ with the \myemph{oriented} $i$-cells of $\cwpart$ as a basis.
Then every $\cwpart$-homeo\-morhism $\dif$ induces a chain automorphism
$\{\dif_i:\chains_i\to \chains_i, \ i=0,1,2\}$ of $\chains$.

Recall that for each continuous mapping $\dif:\nmanif\to\nmanif$ we can define its \myemph{Lefschetz number\/} $L(\dif)$ by the formula:
$$
L(\dif) = \tr(\bdifM_0) - \tr(\bdifM_1) + \tr(\bdifM_2),
$$
where $\bdifM_i:H_i(\nmanif,\RRR) \to H_i(\nmanif,\RRR)$ is the induced homomorphism of the corresponding homology groups and $\tr$ is the trace of this homomorphism.
If $\dif$ is cellular, then $L(\dif)$ can also be calculated via the chain homomorphisms $\dif_i$ by:
$$
L(\dif) = \tr(\dif_0) - \tr(\dif_1) + \tr(\dif_2).
$$

The following theorem is relevant to\;\cite[Lm.\;4.4]{JacoShalen:Topology:1977} being a statement about periodic homeomorphisms.
\begin{theorem}\label{th:aut-surf-graph}{\rm c.f.\;\cite[Lm.\;4.4]{JacoShalen:Topology:1977}.}
Let $\Mman$ be a compact surface, $\kgraph\subset\Mman$ a connected subgraph, $\cannbhg$ be a canonical neighbourhood of $\kgraph$.
Let also $\dif:\Mman\to\Mman$ a homeomorphism such that $\dif$ is homotopic to $\id_{\Mman}$, $\dif(\kgraph)=\kgraph$, and $\dif$ preserves the set of vertexes of $\kgraph$ of degree $2$, and $\dif(\cannbhg)=\cannbhg$.
In particular, $\dif|_{\cannbhg}$ is a $\cwpartg$-homeomorphism.
\begin{enumerate}
\item 
If $\chi(\cannbhg)<0$, then $\dif$ is $\cwpartg$-trivial.
\item 
Suppose that $\cannbhg=\Mman$, $\Mman$ is orientable, and $\chi(\Mman)\geq 0$.
Then every \myemph{annulus\/} $a\in\cwpartg$ is $\dif^{+}$-invariant, and the total number of $\dif$-invariant \myemph{cells\/} of $\cwpartg$ is equal to $\chi(\Mman)$.
\end{enumerate}
\end{theorem}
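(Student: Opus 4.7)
The plan is to compute the Lefschetz number $L(\dif|_{\cannbhg})$ in two different ways and then compare them. First I would observe that in both cases $\dif|_{\cannbhg}$ is homotopic to $\id_{\cannbhg}$ \emph{inside} $\cannbhg$: in case (1) this uses Proposition~\ref{pr:incompr-surf-homotopy}(B), since the hypothesis $\chi(\cannbhg)<0$ together with Lemma~\ref{lm:charact-incompr} makes $\cannbhg$ incompressible in $\Mman$, and $\dif(\cannbhg)=\cannbhg$; in case (2) it is immediate from $\cannbhg=\Mman$ and $\dif\simeq\id_{\Mman}$. Homotopy invariance therefore gives $L(\dif|_{\cannbhg})=\chi(\cannbhg)$ in both situations.

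Second, because $\dif|_{\cannbhg}$ is a $\cwpartg$-homeomorphism, each induced chain map $\dif_i\colon \chains_i\to\chains_i$ is a signed permutation matrix. Cells lying in an orbit of length $\geq 2$ contribute $0$ to $\tr(\dif_i)$, while each $\dif^{\pm}$-invariant $i$-cell contributes $\pm 1$ (for $i\geq 1$), with every invariant $0$-cell counted as $+1$ by the convention of Remark~\ref{rem:no-h-minus-0-cells}. Writing $n_i^{\pm}$ for the number of $\dif^{\pm}$-invariant $i$-cells, this yields
\[
n_0 - (n_1^+-n_1^-) + (n_2^+-n_2^-) \;=\; \chi(\cannbhg).
\]

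To prove part (1) I would then upgrade this scalar identity to a cell-by-cell assertion. The annular $2$-cells of $\cwpartg$ are collars of the boundary components of $\cannbhg$; a homeomorphism homotopic to $\id_{\Mman}$ restricted to an incompressible subsurface cannot permute such boundary components nor reverse their orientation, since that would produce a non-trivial composition of boundary Dehn twists, contradicting Lemmas~\ref{lm:invar_curves_under_homotopy} and~\ref{lm:dehn-twists-along-boundary}. Hence every annular $2$-cell is $\dif^+$-invariant. Next, $\pi_1(\cannbhg)$ is torsion-free (as $\chi(\cannbhg)<0$), so a $\dif^-$-invariant edge carrying an essential loop would force $[e]=[e]^{-1}$ and hence $[e]=1$ in $\pi_1(\cannbhg)$, contradicting essentiality. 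The remaining $1$-cells and the disk $2$-cells are pinned down by a local analysis at each vertex, using that $\dif$ preserves the circular order of edges incident to any fixed vertex of $\kgraph$ (orientation-preservation) together with the hypothesis on degree-$2$ vertices. Combined with the Lefschetz identity, these inputs force $n_i^{-}=0$ and every orbit of cells to be a singleton, which is precisely $\cwpartg$-triviality.

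For part (2), where $\cannbhg=\Mman$ is orientable with $\chi(\Mman)\geq 0$, orientation-preservation of $\dif$ immediately yields $n_2^-=0$ (an invariant $2$-cell inherits its orientation from $\Mman$), and the same boundary-circle argument gives $\dif^+$-invariance of every annular $2$-cell. The identity above then reduces to $\chi(\Mman)=n_0+n_1^++n_2^+$, which is exactly the total number of $\dif$-invariant cells of $\cwpartg$. The principal obstacle throughout is the cell-by-cell refinement needed for part (1): the Lefschetz identity alone is too coarse to rule out nontrivial orbits, and the substantive work lies in combining it with the torsion-freeness of $\pi_1(\cannbhg)$, the boundary-Dehn-twist obstruction of Section~\ref{sect:deform-incompr-surf}, and the vertex-local analysis of the cellular automorphism induced by $\dif$ on $\kgraph$.
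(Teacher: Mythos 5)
Your overall strategy (Lefschetz number plus a cellular trace count, with the annuli forced to be $\dif^{+}$-invariant) is the same as the paper's, and your use of Proposition~\ref{pr:incompr-surf-homotopy}(B) to get $\dif|_{\cannbhg}\simeq\id_{\cannbhg}$ inside $\cannbhg$ is exactly right. But there are two genuine gaps. First, you apply the Hopf trace formula directly to $\cwpartg$, and $\cwpartg$ is \emph{not} a CW structure: its ``annular $2$-cells'' are half-open annuli $S^1\times(0,1]$, which are not contractible, so the chain complex built on $\cwpartg$ does not compute $H_*(\cannbhg)$ and your displayed identity is false --- it is off by exactly $b$, the number of boundary components. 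This is not a cosmetic point: the paper's proof works by collapsing each boundary circle of $\cannbhg$ to a point, obtaining a closed surface $\hcannbhg$ with $\chi(\hcannbhg)=\chi(\cannbhg)+b$ in which the annuli become honest $2$-cells, and the whole argument for part (1) is the resulting numerical contradiction: a non-trivial cellular homeomorphism would have exactly $L(\hdifM)=\chi(\hcannbhg)$ invariant cells, yet it already has at least the $b$ capped annuli as invariant $2$-cells, and $b>\chi(\hcannbhg)$ precisely because $\chi(\cannbhg)<0$.

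Second, the step you describe as ``upgrading the scalar identity to a cell-by-cell assertion'' is the actual core of the proof, and your sketch of it does not go through. The key fact (Proposition~\ref{pr:k_Lh} in the paper) is a dichotomy: for an orientation-preserving cellular homeomorphism of a closed orientable surface, \emph{either} it is $\cwpart$-trivial, \emph{or} it has no $\dif^{+}$-invariant $1$-cells and no $\dif^{-}$-invariant $2$-cells, so that every invariant cell contributes $+1$ after the sign $(-1)^i$ and the invariant-cell count equals $L(\dif)$ exactly. The proof of this is a propagation argument (Claim~\ref{clm:h_fixes_nbh_e}): a $\dif^{+}$-invariant edge whose adjacent faces are preserved forces triviality at its endpoints and hence, by connectedness, everywhere. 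Your substitute --- torsion-freeness of $\pi_1(\cannbhg)$ ruling out $\dif^{-}$-invariant edges --- does not do this job: it at best applies to edges that are loops, it says nothing about a non-loop edge whose endpoints are swapped, and in any case what must be excluded in the non-trivial case is the $\dif^{+}$-invariant edges, not the $\dif^{-}$-invariant ones. Your concluding sentence for part (2), $\chi(\Mman)=n_0+n_1^{+}+n_2^{+}$, is symptomatic: it does not follow from your identity without knowing $n_1^{+}=0$ and counting the $n_1^{-}$ edges with a $+$ sign, which is exactly the dichotomy you have not proved. Finally, in case (1) the surface $\cannbhg$ may be non-orientable, so ``$\dif$ preserves orientation'' and the cyclic order at a vertex are not globally available; the paper handles this by passing to the orientation double cover (Corollary~\ref{cor:chi-neg-h-triv}), and your argument needs the same detour.
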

The proof of Theorem~\ref{th:aut-surf-graph} will be given in\;\S\ref{sect:proof-th:aut-surf-graph}.
It is based on Proposition~\ref{pr:incompr-surf-homotopy} and on the following statement.

\begin{proposition}\label{pr:k_Lh}
Let $\nmanif$ be a {\bfseries closed, orientable\/} surface endowed with some cellular subdivision $\cwpart$ and $\dif:\nmanif\to\nmanif$ be a $\cwpart$-homeomorphism \myemph{\/preserving orientation\/} of $\nmanif$ and being not $\cwpart$-trivial, i.e. $\dif(e)\not=e$ for some cell $e\in\cwpart$. 
Then the number of $\dif$-invariant cells of $\cwpart$ is precisely equal to $L(\dif)$.
In particular, $L(\dif)\geq0$.
\end{proposition}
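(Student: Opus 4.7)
My plan is to compute $L(\dif)$ directly from the cellular chain complex and compare it with the total number $N$ of $\dif$-invariant cells. Since $\dif$ is a $\cwpart$-homeomorphism, each chain map $\dif_i:\chains_i\to\chains_i$ is a signed permutation of the basis of oriented $i$-cells: an $i$-cell $\sigma$ contributes $+1$ to $\tr(\dif_i)$ if it is $\dif^+$-invariant, $-1$ if it is $\dif^-$-invariant, and $0$ otherwise. Writing $F_i^\pm$ for the number of $\dif^\pm$-invariant $i$-cells, we get $\tr(\dif_i)=F_i^+-F_i^-$. By the convention of Remark~\ref{rem:no-h-minus-0-cells} we have $F_0^-=0$, and because $\dif$ preserves the orientation of $\nmanif$ it must preserve the induced orientation on every invariant $2$-cell, so $F_2^-=0$. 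Substituting into $L(\dif)=\tr(\dif_0)-\tr(\dif_1)+\tr(\dif_2)$ gives
$$L(\dif)\;=\;F_0^+-F_1^++F_1^-+F_2^+\;=\;N-2F_1^+.$$
Both conclusions of the proposition will therefore follow at once from the single assertion that \emph{no $1$-cell of $\cwpart$ is $\dif^+$-invariant}.

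To prove this assertion I plan to argue by contradiction, showing that any hypothetical $\dif^+$-invariant $1$-cell would propagate $\dif^+$-invariance across the whole of $\cwpart$ and force $\dif$ to be $\cwpart$-trivial. The propagation rests on two local facts. \emph{Step~(A), from an edge to its adjacent $2$-cells}: if a $1$-cell $e$ is $\dif^+$-invariant, then $\dif$ preserves both the orientation of $\nmanif$ and the orientation of $e$, so it preserves the normal orientation of $e$ and hence each of the two local sides of $e$; cellularity then forces the $2$-cell lying on each side to be set-invariant, and the orientation-preservation on $\nmanif$ makes it $\dif^+$-invariant. \emph{Step~(B), from a $2$-cell back to its boundary edges}: if a $\dif^+$-invariant $2$-cell $f$ has on its boundary a $\dif^+$-invariant $1$-cell $e$, I work on the characteristic disk $D^2$ of $f$, where $\dif|_{\bar f}$ lifts to an orientation-preserving self-homeomorphism $\tilde\dif$ of $D^2$ sending a preimage arc $\tilde e\subset\partial D^2$ of $e$ to itself preserving its orientation. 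Then $\tilde\dif$ maps the complementary arc of $\partial D^2$ to itself, preserving orientation and fixing its two endpoints. An orientation-preserving self-homeomorphism of an arc fixing both endpoints is strictly increasing, so it cannot non-trivially permute any finite set of interior points; consequently every boundary $0$-cell on the complementary arc is $\dif$-fixed, and each sub-arc between consecutive $0$-cells is set-invariant with preserved orientation, making every boundary $1$-cell of $f$ $\dif^+$-invariant.

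Alternating (A) and (B) starting from the hypothetical $\dif^+$-invariant edge $e$ will spread $\dif^+$-invariance through the dual adjacency graph of $\cwpart$ (vertices are $2$-cells, adjacent when they share a $1$-cell). This graph is connected because $\nmanif$ is a connected closed surface, so the propagation will eventually reach every $2$-cell, hence every $1$-cell (each lying in the boundary of some $2$-cell) and every $0$-cell (each being the endpoint of some $1$-cell). Every cell of $\cwpart$ would then be $\dif^+$-invariant, contradicting the hypothesis, and yielding $F_1^+=0$. The principal obstacle I anticipate is step~(B), where the argument is sensitive to how the attaching map of a $2$-cell embeds (or fails to embed) its boundary circle into $\cwpart$; passing to the characteristic disk resolves the issue in the generic case, and for the combinatorial structures actually arising in the paper (such as $\cwpartg$ associated to canonical neighbourhoods of a graph $\kgraph$) the $2$-cells are genuine $2$-disks with embedded boundary in $\kgraph$, so the required lift of $\dif|_{\bar f}$ to $D^2$ exists automatically.
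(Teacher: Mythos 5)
Your overall strategy coincides with the paper's: both proofs reduce the statement to the single assertion that no $1$-cell is $\dif^{+}$-invariant, via the identity $\tr(\dif_i)=F_i^{+}-F_i^{-}$ together with $F_0^{-}=F_2^{-}=0$, and both establish that assertion by showing that one $\dif^{+}$-invariant $1$-cell would propagate $\dif^{+}$-invariance over all of $\cwpart$ by connectedness, contradicting non-triviality. The difference lies in the local propagation step. The paper propagates through vertex links: if $e$ is $\dif^{+}$-invariant and the $2$-cells adjacent to $e$ are invariant, then at a vertex $v$ of $e$ the map $\dif$ induces a cyclic-order-preserving permutation of the edges incident to $v$ which fixes one position, hence fixes all of them, and the invariance spreads from vertex to vertex along the $1$-skeleton. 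You propagate instead around $2$-cell boundaries (your step (B)), and that is where your argument has a genuine gap.

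Step (B) requires lifting $\dif|_{\bar f}$ to a homeomorphism of the characteristic disk $D^2$ of $f$. When the attaching map is not injective on $\partial D^2$, which is the typical situation here, such a lift is not automatic: $\dif$ only hands you a homeomorphism of the open cell $f\cong\Int D^2$, and extending it over $\partial D^2$ compatibly with the characteristic map needs an argument (e.g. via prime ends, or via an explicit collar of $\bar f\setminus f$ inside $\bar f$). Your fallback claim, that for the subdivisions $\cwpartg$ actually arising in the paper the $2$-cells have embedded boundary in $\kgraph$, is false: already for a canonical neighbourhood of a wedge of two circles on a closed surface, a single disk component of $\cannbhg\setminus\kgraph$ can traverse an edge of $\kgraph$ twice, so its closure is not an embedded disk. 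The lift does exist, but you must prove it; alternatively, replace step (B) by the paper's vertex-link argument, which never leaves the ambient surface and avoids characteristic maps altogether. With either repair, the rest of your computation (the formula $L(\dif)=N-2F_1^{+}$, step (A), and the connectedness argument) is sound.
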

\begin{proof}
Let $\fc_{i}$ $(i=0,1,2)$ be the number of $\dif$-invariant $i$-cells of $\cwpart$ and $\fc:=\fc_0+\fc_1+\fc_2$. 
We will show that 
\begin{equation}\label{equ:fc_Lh}
\fc_{i} = (-1)^{i} \; \tr(\dif_{i}),
\end{equation}
which will imply
$$
\fc = \sum_{i=0}^{2} \fc_i =  \sum_{i=0}^{2} (-1)^{i} \; \tr(\dif_{i}) = L(\dif).
$$

To prove~\eqref{equ:fc_Lh} we have to show that there are no $\dif^{-}$-invariant $0$- and $2$-cells and no $\dif^{+}$-invariant $1$-cells.
For $0$-cells this holds by Remark~\ref{rem:no-h-minus-0-cells} and for $2$-cells from the assumptions that $\nmanif$ is orientable and $\dif$ preserves orientation.

Let $e$ be an $\dif$-invariant $1$-cell and $f_0$ and $f_1$ be two $2$-cells that are incident to $e$. It is possible of course that $f_0=f_1$.
Since $\dif$ preserves orientation, it follows that

(a) either $\dif_2(f_j) = +f_j$ for $j=0,1$, and $\dif_1(e)=+e$,

(b) or $\dif_2(f_j) = +f_{1-j}$ for $j=0,1$, and $\dif_1(e)=-e$.

The following Claim~\ref{clm:h_fixes_nbh_e} implies that in the case (a) $\dif$ is $\cwpart$-trivial.
Since $\dif$ is not $\cwpart$-trivial, we will get from (b) that all $\dif$-invariant $1$-cells are $\dif^{-}$-invariant.

\begin{claim}\label{clm:h_fixes_nbh_e}
Suppose that there exists a $1$-cell $e \in \cwpart$ such that
\begin{enumerate}
\item[\rm(i)]
$\dif_1(e)=+e \in \chains_1$ and
\item[\rm(ii)]
$\dif$ preserves each $2$-cell which is adjacent to $e$.
\end{enumerate}
Then $\dif$ is $\cwpart$-trivial.
\end{claim}
\begin{proof}
Notice that for each vertex $v\in\nmanif_0$ the inclusion $\nmanif_1\subset\nmanif$ induces a cyclic ordering of edges that are incident to $v$. 

Let $v$ be a vertex of $e$. 
Then it follows from (i) and (ii) that all of the $1$- and $2$-cells incident to $v$ are $\dif^{+}$-invariant.
Moreover, for each $1$-cell that is incident to $v$ the conditions (i) and (ii) also hold true.
Since $\nmanif$ is connected, it follows that $\dif$ is $\cwpart$-trivial.
\end{proof}
Proposition~\ref{pr:k_Lh} is completed.
\end{proof}

\begin{corollary}\label{cor:chi-neg-h-triv}
Let $\nmanif$ be a {\bfseries closed} surface, $\cwpart$ be a cellular subdivision of $\Mman$, and $\dif:\nmanif\to\nmanif$ be a $\cwpart$-homeomorphism.
If $\dif$ is isotopic to $\id_{\nmanif}$, then each of the following conditions implies that $\dif$ is $\cwpart$-trivial:
\begin{enumerate}
\item 
$\chi(\nmanif)<0$;

\item
$\chi(\nmanif)\geq0$ and the total number of $\dif^{+}$-invariant $2$-cells is greater than $\chi(\nmanif)$.
\end{enumerate}
\end{corollary}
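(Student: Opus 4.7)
The plan is to derive a contradiction from Proposition~\ref{pr:k_Lh} by computing the Lefschetz number of $\dif$ and comparing it with the invariant-cell counts permitted by the two hypotheses. The key input is that since $\dif$ is isotopic to $\id_\nmanif$, it induces the identity on $H_*(\nmanif,\RRR)$, so $L(\dif)=L(\id_\nmanif)=\chi(\nmanif)$.

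First I would dispose of the case when $\nmanif$ is orientable, where Proposition~\ref{pr:k_Lh} applies directly. Suppose, towards a contradiction, that $\dif$ is not $\cwpart$-trivial. Then the proposition produces exactly $L(\dif)=\chi(\nmanif)$ invariant cells in $\cwpart$. In case~(1) this number is negative, which is absurd, so $\dif$ must be $\cwpart$-trivial. In case~(2) the number of $\dif^+$-invariant $2$-cells is bounded above by the total number of $\dif$-invariant cells, forcing $\chi(\nmanif)\ge\text{(number of }\dif^+\text{-invariant 2-cells)}>\chi(\nmanif)$, again a contradiction.

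For non-orientable $\nmanif$ I would reduce to the orientable case via the orientation double cover $p:\tnmanif\to\nmanif$ with deck involution $\tau$. The subdivision $\cwpart$ lifts to a cellular subdivision $\tcwpart$ of $\tnmanif$ in which every cell $e\in\cwpart$ has exactly two preimages $\tilde e_1,\tilde e_2=\tau(\tilde e_1)$. The isotopy from $\id_\nmanif$ to $\dif$ lifts to an isotopy from $\id_{\tnmanif}$ to a $\tcwpart$-homeomorphism $\tdif$, which is necessarily orientation-preserving on $\tnmanif$. If $\dif(e)\neq e$ for some $e\in\cwpart$, then $\tdif$ sends each preimage of $e$ to a preimage of $\dif(e)$, so no preimage of $e$ is $\tdif$-invariant; in particular $\tdif$ is not $\tcwpart$-trivial. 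A parallel bookkeeping shows each $\dif^+$-invariant $2$-cell of $\cwpart$ lifts to two $\tdif^+$-invariant $2$-cells of $\tcwpart$. Since $\chi(\tnmanif)=2\chi(\nmanif)$, both inequalities survive to $\tnmanif$, and the orientable argument above yields the desired contradiction.

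The step I expect to be the main obstacle is the bookkeeping in the non-orientable reduction: one must verify that for a $\dif^+$-invariant $2$-cell $e$ the lift $\tdif$ does \emph{not} swap the preimages $\tilde e_1,\tilde e_2$ and preserves rather than reverses their orientations. Both properties follow from the relation $\tdif\circ\tau=\tau\circ\tdif$ combined with the fact that $\tau$ is orientation-reversing on $\tnmanif$ and that $\tdif$ is obtained as the endpoint of an isotopy starting at $\id_{\tnmanif}$. Once this is pinned down, the corollary is a short deduction from Proposition~\ref{pr:k_Lh} via the identity $L(\dif)=\chi(\nmanif)$.
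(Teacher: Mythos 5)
Your proposal is correct and follows essentially the same route as the paper: use $L(\dif)=L(\id_\nmanif)=\chi(\nmanif)$ together with Proposition~\ref{pr:k_Lh} in the orientable case, and reduce the non-orientable case to the orientable one via the oriented double cover, where $\chi$ and the count of invariant $2$-cells both double. The only cosmetic difference is that you argue by contradiction while the paper argues directly, and in the descent step you should also note that a $\dif^-$-invariant cell downstairs forces a non-$\tcwpart$-trivial lift (the paper glosses over this point as well).
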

\begin{proof}
Since $\dif$ is isotopic to $\id_{\nmanif}$, we have that $ L(\dif) = L(\id_{\nmanif}) = \chi(\nmanif)$.

If $\nmanif$ is orientable, then $\dif$ preserves orientation and by Proposition~\ref{pr:k_Lh} $\dif$ is either $\cwpart$-trivial or has exactly $\chi(\nmanif)\geq0$ invariant cells.
Each of the conditions (1) and (2) implies that the number of $\dif$-invariant cells is not equal to $\chi(\nmanif)$.
Hence $\dif$ is $\cwpart$-trivial.

\medskip

Suppose that $\nmanif$ is non-orientable and let $p:\tnmanif\to\nmanif$ be its oriented double covering.
Then $\cwpart$ lifts to some cellular subdivision $\tcwpart$ of $\tnmanif$ and $\dif$ lifts to a unique 
$\tcwpart$-cellular homeomorphism $\tdifM$ of $\tnmanif$ which is isotopic to $\id_{\tnmanif}$.
Therefore $ L(\tdifM) = L(\id_{\tnmanif}) = \chi(\tnmanif) = 2\chi(\nmanif)$.

We claim that every of the conditions (1) and (2) implies that $\tdifM$ is $\tcwpart$-trivial, whence $\dif$ will be $\cwpart$-trivial.

(1) If $\chi(\nmanif)<0$, then $\chi(\tnmanif)<0$, whence $\tdifM$ is $\tcwpart$-trivial.

(2) Suppose that $\chi(\nmanif)\geq0$ and the total number $b$ of $\dif^{+}$-invariant $2$-cells is greater than $\chi(\nmanif)$.
Let $e$ be an $\dif^{+}$-invariant $2$-cell of $\cwpart$ and $\tilde{e}_1$ and $\tilde{e}_2$ be its liftings in $\tcwpart$.
Then they are $\tdifM^{+}$-invariant.
Hence $\tdifM$ has at least $2b>2\chi(\nmanif)=\chi(\tnmanif)$ invariant cells.
Then by Proposition~\ref{pr:k_Lh} $\tdifM$ is $\tcwpart$-trivial.
\end{proof}

\subsection{Proof of Theorem~\ref{th:aut-surf-graph}}\label{sect:proof-th:aut-surf-graph}
Let $\dif:\Mman\to\Mman$ be a homeomorphism homotopic to the identity and such that $\dif|_{\cannbhg}$ is a $\cwpartg$-homeomorphism.
Let $\gamma_1,\ldots,\gamma_b$ be all the connected components of $\partial\cannbhg$, and $a_1,\ldots,a_b$ be the annuli of $\cwpartg$ corresponding to them, so that $\gamma_i\subset a_i$.
Shrink every $\gamma_i$ to a point $x_i$ and denote the obtained surface by $\hcannbhg$.
Then $\hcannbhg$ is a closed orientable surface and $\cwpartg$ yields an evident cellular partition $\tcwpart$ of $\hcannbhg$ such that each annulus $a_i$ corresponds to a certain $2$-cell $\widehat{a}_i\in\tcwpart$.

Also notice that $\chi(\hcannbhg) = \chi(\cannbhg)+b$.

\begin{claim}\label{clm:1_2_imply_hNk_idNk}
Suppose that either $\chi(\cannbhg)<0$ or $\cannbhg=\Mman$.
Then 
\begin{enumerate}
\item[(a)]
$\dif|_{\cannbhg}$ is homotopic to $\id_{\cannbhg}$ in $\cannbhg$.
\item[(b)]
$\dif(\gamma_i)=\gamma_i$ for $i=1,\ldots,b$ and $\dif$ preserves orientation of $\gamma_i$;
\item[(c)]
$\dif$ induces some $\tcwpart$-homeomorphism $\hdifM:\hcannbhg\to\hcannbhg$ homotopic to $\id_{\hcannbhg}$ with respect to $\{x_1,\ldots,x_b\}$, in particular, every $2$-cell $a_i\in\tcwpart$ is $\hdifM^{+}$-invariant;
\item[(d)]
$L(\hdifM)=L(\id_{\hcannbhg})=\chi(\hcannbhg)= \chi(\cannbhg)+b.$
\end{enumerate}
\end{claim}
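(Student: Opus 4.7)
\smallskip
\noindent\emph{Plan.} The four parts will be established in order, with (a) and (b) doing the geometric work and (c)--(d) following by cellular bookkeeping.

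For (a), I split on the hypothesis. If $\cannbhg=\Mman$, then $\dif|_{\cannbhg}=\dif$ is homotopic to $\id_{\Mman}=\id_{\cannbhg}$ by assumption. Otherwise $\chi(\cannbhg)<0$, so $\cannbhg$ is not a $2$-disk; by construction $\cannbhg$ is $\regnbhg$ together with all $2$-disk complementary components adjoined, hence Lemma~\ref{lm:charact-incompr} gives that $\cannbhg$ is incompressible in $\Mman$. Since $\dif(\cannbhg)=\cannbhg$ and $\dif\simeq\id_{\Mman}$, Proposition~\ref{pr:incompr-surf-homotopy}(B) applied with $\nmanif_j=\cannbhg$ supplies the required homotopy of $\id_{\cannbhg}$ to $\dif|_{\cannbhg}$ staying inside $\cannbhg$.

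For (b), when $\chi(\cannbhg)<0$ I apply Proposition~\ref{pr:incompr-surf-homotopy}(C) to produce a homotopy $\Gdif:\Mman\times I\to\Mman$ from $\id_{\Mman}$ to $\dif$ that preserves $\cannbhg$ and every connected component $B$ of $\overline{\Mman\setminus\cannbhg}$. Each $\gamma_i$ is a boundary component of a uniquely determined such $B$, and $t\mapsto\Gdif_t|_{B}$ is a continuous path in $\mathrm{Homeo}(B)$ starting at $\id_{B}$; since the components of $\partial B\cap\partial\cannbhg$ form a discrete set, each $\gamma_i$ is preserved pointwise as a set, so $\dif(\gamma_i)=\gamma_i$. (The remaining case $\cannbhg=\Mman$ is handled by direct inspection: $\gamma_i\subset\partial\Mman$, $\dif(\partial\Mman)=\partial\Mman$, and $\dif\simeq\id_{\Mman}$ implies each boundary component is preserved.) I then invoke Lemma~\ref{lm:invar_curves_under_homotopy} successively on each $\gamma_i$ to modify $\Gdif$ so that $\Gdif_t(\gamma_i)=\gamma_i$ for every $t$. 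Then $t\mapsto\Gdif_t|_{\gamma_i}$ is a path in $\mathrm{Homeo}(\gamma_i)\cong\mathrm{Homeo}(S^1)$ from $\id_{\gamma_i}$ to $\dif|_{\gamma_i}$, and since $\mathrm{Homeo}(S^1)$ has exactly two connected components distinguished by orientation, $\dif|_{\gamma_i}$ preserves orientation.

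Because $\dif$ fixes each $\gamma_i$ setwise and orientation-preservingly, the collapsing $\gamma_i\mapsto x_i$ is $\dif$-equivariant, so $\dif|_{\cannbhg}$ descends to a $\tcwpart$-homeomorphism $\hdifM:\hcannbhg\to\hcannbhg$. The refined homotopy from part (b) descends to a homotopy from $\id_{\hcannbhg}$ to $\hdifM$ that fixes $\{x_1,\ldots,x_b\}$ pointwise at every time, proving the first assertion of (c). For the ``in particular'' part, the $2$-cell $\widehat{a}_i\in\tcwpart$ contains $x_i$ as the cone point of the collapse $a_i\to\widehat{a}_i$, so $\hdifM(\widehat{a}_i)=\widehat{a}_i$; moreover the local degree of $\hdifM$ at $x_i$ equals $+1$ because it is enforced by the homotopy rel $x_i$, giving $\hdifM^{+}$-invariance of $\widehat{a}_i$. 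Finally, (d) is immediate: homotopy invariance of the Lefschetz number yields $L(\hdifM)=L(\id_{\hcannbhg})=\chi(\hcannbhg)$, and collapsing each of the $b$ boundary circles of $\cannbhg$ to a point increases Euler characteristic by $1$, so $\chi(\hcannbhg)=\chi(\cannbhg)+b$.

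The main obstacle is part (b): showing $\dif(\gamma_i)=\gamma_i$ \emph{setwise} before one can argue about orientation. It requires combining Proposition~\ref{pr:incompr-surf-homotopy}(C), an analysis of complementary pieces, and Lemma~\ref{lm:invar_curves_under_homotopy}, with the borderline case $\cannbhg=\Mman$, $\chi(\Mman)\geq 0$ needing a separate treatment since Proposition~\ref{pr:incompr-surf-homotopy}(C) assumes $\chi(\nmanif_j)<0$.
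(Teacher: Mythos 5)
Your overall route is the same as the paper's: part (a) via Proposition~\ref{pr:incompr-surf-homotopy}(B) (or trivially when $\cannbhg=\Mman$), and (b)--(d) extracted from (a) by collapsing the boundary circles; the paper compresses (b)--(d) into one sentence, and your elaboration of (c) and (d) is correct as written.

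The one step whose justification does not hold up is the deduction of $\dif(\gamma_i)=\gamma_i$ in (b). The homotopies supplied by Proposition~\ref{pr:incompr-surf-homotopy}(B),(C) are homotopies, not isotopies, so $t\mapsto\Gdif_t|_{B}$ is \emph{not} a path in the homeomorphism group of $B$, and even the correct consequence $\dif(B)=B$ does not single out the individual circles $\gamma_i$ when a complementary component $B$ meets $\partial\cannbhg$ in more than one circle (e.g.\ a pair of pants with all boundary circles on $\partial\cannbhg$). The argument that actually works --- and the one the paper itself uses inside the proof of Proposition~\ref{pr:incompr-surf-homotopy}(C) --- is: by (a), $\dif(\gamma_i)$ is freely homotopic to $\gamma_i$ in $\cannbhg$, and distinct boundary components of a compact surface with $\chi<0$ are never freely homotopic, so $\dif$ cannot permute them nontrivially. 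Similarly, for orientation you should not speak of a path in $\mathrm{Homeo}(S^1)$; once Lemma~\ref{lm:invar_curves_under_homotopy} makes the homotopy preserve $\gamma_i$, the restriction is a homotopy of self-maps of $S^1$ from $\id$ to $\dif|_{\gamma_i}$, and homotopy invariance of the degree gives $\deg(\dif|_{\gamma_i})=+1$. (Two smaller caveats you share with the paper: Lemma~\ref{lm:invar_curves_under_homotopy} formally excludes curves bounding a M\"obius band, which a $\gamma_i$ may do; and in the residual case $\cannbhg=\Mman$ with $\chi(\Mman)\geq0$ the assertion that ``homotopic to $\id_{\Mman}$ implies each boundary circle is preserved'' fails for the annulus, where the boundary swap is homotopic to the identity --- so that case is not settled by ``direct inspection.'')
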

\begin{proof}
(a) For $\cannbhg=\Mman$ this statement is trivial.
If $\chi(\cannbhg)<0$, then by (B) of Proposition~\ref{pr:incompr-surf-homotopy} (or directly by\;\cite[Lm.\;4.1]{JacoShalen:Topology:1977}) $\dif|_{\cannbhg}$ is homotopic to $\id_{\cannbhg}$ in $\cannbhg$.
All other statements (b)-(d) follow from (a).
\end{proof}

Now we can complete Theorem~\ref{th:aut-surf-graph}.

(1) Suppose that $\chi(\cannbhg)<0$.
If also $\chi(\hcannbhg)<0$, then by (1) of Corollary~\ref{cor:chi-neg-h-triv} $\hdifM$ is $\tcwpart$-trivial, whence $\dif$ is $\cwpartg$-trivial as well.

Let $\chi(\hcannbhg)\geq0$.
By Claim\;\ref{clm:1_2_imply_hNk_idNk} $\hdifM$ has at least $b$ $\hdifM^{+}$-invariant $2$-cells $a_1,\ldots,a_b$.
Moreover, since $\chi(\hcannbhg)-b=\chi(\cannbhg)<0$, we obtain that $b>\chi(\hcannbhg)$, whence by (2) of Corollary~\ref{cor:chi-neg-h-triv} $\hdifM$ is $\tcwpart$-trivial.
Therefore $\dif$ is $\cwpartg$-trivial.

(2) Suppose that $\cannbhg=\Mman$ and $\Mman$ is orientable.
It follows from (c) of Claim~\ref{clm:1_2_imply_hNk_idNk} and Proposition~\ref{pr:k_Lh} that $\hdifM$ is either $\tcwpart$-trivial or has exactly $\chi(\hcannbhg)$ invariant cells.
Therefore, $\dif$ is either $\cwpartg$-trivial or has exactly $\chi(\hcannbhg)-b=\chi(\cannbhg)=\chi(\Mman)$ invariant cells.
\endproof

\section{Deformations of diffeomorphism near critical components of $\partf$}\label{sect:deform-diff-crit-comp}
The following two propositions will be crucial for the proof of Theorem\;\ref{th:stab1-repr}.
Suppose $\func:\Mman\to\Psp$ satisfies \AxBd, \AxIsol, and \AxAdm.
\begin{proposition}\label{prop:deform-in-Stabf}
Let $\crcomp$ be a critical component of $\partf$ such that every $z\in\crcomp\cap\singf$ is admissible, $\regnbh$ be its atom, and $\Uman$ be any neighbourhood of $\regnbh$.
Let also $\dif\in\Stabf$.
Suppose that $\dif(\omega)=\omega$ for each leaf $\omega$ of $\partf$ contained in $\crcomp$ and that $\dif$ preserves orientation of $\omega$ whenever $\dim\omega=1$.
Then $\dif$ is isotopic in $\Stabf$ to a diffeomorphism $\dif'\in\Stabf$ such that $\dif'=\dif$ on $\Mman\setminus\Uman$, and $\dif'$ is the identity on some neighbourhood of $\regnbh$ in $\Uman$.
\end{proposition}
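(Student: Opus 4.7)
The strategy is to build a smooth vector field $\Fld$ on a neighbourhood of $\regnbh$ whose orbits are the leaves of $\partf$ and along which $\func$ is constant, express $\dif$ as a time map of (the local flow of) $\Fld$ via a smooth function $\tau$, and finally scale $\tau$ by a cutoff so that the resulting isotopy stays inside $\Stabf$. Write $(\Fld_s)$ for the local flow of $\Fld$.

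First, for each saddle $z\in\crcomp\cap\singf$, admissibility gives pairwise disjoint neighbourhoods $\Uman_z\subset\Uman$ and vector fields $\AFld_z$ on $\Uman_z$ such that $z$ is the only singular point of $\AFld_z$ and $\func$ is constant along its orbits. On a tubular neighbourhood of $\regnbh\setminus\singf$ I would build a smooth nonvanishing vector field tangent to the leaves of $\partf$ by means of local flow-boxes and a partition of unity. Since at a regular point any two such smooth tangent-to-leaf fields differ by a nonzero scalar, I can rescale and glue these with the $\AFld_z$ via a partition of unity into a single smooth vector field $\Fld$ on a neighbourhood $\WFr$ of $\regnbh$, which coincides with $\AFld_z$ near each $z$, is nonvanishing off $\singf$, and along whose orbits $\func$ is constant.

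Second, on some neighbourhood $\Vman\subset\WFr$ of $\regnbh$ I would construct a smooth function $\tau:\Vman\to\RRR$ with $\dif(x)=\Fld_{\tau(x)}(x)$. Near each saddle $z$, the assumption that $\dif$ fixes the $0$-dimensional leaf $\{z\}$ and preserves every nearby $1$-dimensional leaf with its orientation means $\dif$ preserves the orbits of $\AFld_z$, so condition (2) of Definition~\ref{defn:admissible_cr_pt} provides the desired germ $\tau$ at $z$. At a regular point $x$ on a leaf $\omega$, the preservation of $\omega$ together with its orientation makes $\tau(x)$ uniquely determined by the flow parametrization of $\omega$; smoothness follows from a standard flow-box argument, and the local constructions agree on overlaps by uniqueness.

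Finally, choose open sets $\regnbh\subset\Vman_1\subset\overline{\Vman_1}\subset\Vman_2\subset\overline{\Vman_2}\subset\Uman\cap\Vman$ and a smooth $\mu:\Mman\to[0,1]$ that equals $1$ on $\Vman_1$ and $0$ off $\Vman_2$. Define
$$
\dif_t(x)=\Fld_{(1-t\mu(x))\tau(x)}(x) \quad\text{for } x\in\Vman, \qquad \dif_t(x)=\dif(x) \quad\text{for } x\notin\Vman_2,
$$
which agree on the overlap because $\mu\equiv 0$ there. Then $\dif_0=\dif$, while $\dif_1=\id$ on $\Vman_1$ and $\dif_1=\dif$ outside $\Vman_2\subset\Uman$; each $\dif_t$ lies in $\Stabf$ because $\Fld$ is tangent to the level sets of $\func$. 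Setting $\dif'=\dif_1$ concludes the argument. The main obstacle is the gluing in the first step: producing a smooth vector field on a neighbourhood of $\regnbh$ that simultaneously coincides with the $\AFld_z$ near each saddle, is nonvanishing off $\singf$, and carries compatible orientations across the separatrices of each saddle, so that $\tau$ is smooth up to and across the critical leaves.
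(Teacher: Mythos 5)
Your overall strategy (a leaf-tangent vector field, a shift function $\tau$ with $\dif=\Fld_{\tau}$, and a cutoff) is the same as the paper's, but three points in your write-up are genuine gaps rather than routine details. First, the gluing you flag as ``the main obstacle'' is exactly what the paper avoids: for orientable $\Mman$ it takes the Hamiltonian vector field $\HFld$ of $\func$ with respect to an area form, which is globally defined, tangent to the leaves, singular exactly on $\singf$, and then only has to be \emph{rescaled} near each saddle to agree with $\AFld_z$ (the two fields are parallel there, so a partition of unity works); the non-orientable case is reduced to this one. Your flow-box-plus-partition-of-unity construction needs the foliation to be orientable near $\regnbh$ and the local orientations to be chosen consistently with every $\AFld_z$ across all separatrices; you do not establish this, and without the Hamiltonian shortcut it is not automatic.

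Second, your final isotopy $\dif_t(x)=\Fld_{(1-t\mu(x))\tau(x)}(x)$ need not consist of diffeomorphisms for an arbitrary cutoff $\mu$: a shift map $x\mapsto\Fld_{\alpha(x)}(x)$ is injective only under a condition of the type $1+d\alpha(\Fld)>0$ along non-closed orbits, and multiplying $\tau$ by a $\mu$ that varies along the leaves can destroy it (the extra term $\tau\,d\mu(\Fld)$ is uncontrolled). The paper requires $\mu$ to be \emph{constant along the orbits} of the field and invokes \cite[Lm.~4.14]{Maks:AGAG:2006} precisely for this reason; you should impose the same condition. Third, your uniqueness argument for $\tau$ at regular points fails on the closed leaves, which make up almost all of the atom $\regnbh$: on a periodic orbit the shift value is determined only modulo the period, so ``local constructions agree on overlaps by uniqueness'' is false there. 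The paper instead gets a \emph{unique} $\sigma$ on the critical component $\crcomp$ (whose leaves are points and open arcs, hence non-periodic) and then extends it continuously to a neighbourhood, which is what pins down the branch on the nearby circles. With these three repairs your argument becomes the paper's proof.
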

\begin{proof}
This proposition follows the line of\;\cite[Th.\;6.2]{Maks:AGAG:2006}.
For the convenience of the reader we will recall the key arguments for the case when $\Mman$ is orientable.
A non-orientable case can be deduced from the orientable one similarly to \cite[Th.\;6.2]{Maks:AGAG:2006}.

As $\Mman$ is orientable, it has a symplectic structure.
Let $\HFld$ be the Hamiltonian vector field of $\func$.
Then $\func$ is constant along orbits of $\HFld$, the set of singular points of $\HFld$ coincides with the set of critical points of $\func$, and the foliation by orbits of $\HFld$ coincides with $\partf$.
In particular, $\HFld$ is tangent to $\partial\Mman$ and therefore generates a flow $\HFlow:\Mman\times\RRR\to\Mman$.

We will now change $\HFld$ on neighbourhoods of admissible critical points of $\func$ similarly to\;\cite[Lm.\;5.1]{Maks:AGAG:2006}.
Let $z\in\singf$ be such a point and $\AFld_z$ be a vector field on some neighbourhood $\Uman_z$ of $z$ satisfying assumptions of Definition\;\ref{defn:admissible_cr_pt}.
Then it follows from (i) of Definition\;\ref{defn:admissible_cr_pt} that for every $x\in\Uman_z$ the vectors $\HFld(x)$ and $\AFld_z$ are parallel each other.
Therefore, using partition unity technique and changing (if necessary) the signs of $\AFld_z$, we can change $\HFld$ near each $z\in\regnbh\cap\singf$ and assume that $\HFld=\AFld_z$ on $\Uman_z$.

\begin{claim}
There exists a neighbourhood $\Uman$ of $\regnbh$ and a unique $\Cinf$ function $\sigma:\Uman\to\RRR$ such that $\dif(x)=\HFlow(x,\sigma(x))$ for all $x\in\Uman$.
\end{claim}
\begin{proof}
Let $z\in\crcomp\cap\singf$.
By assumption $\dif$ preserves leaves of $\partf$ (i.e. orbits of $\HFlow$) in $\crcomp$ with their orientations.
Since $\AFld_z=\HFld$ near $z$, it follows from (ii) of Definition\;\ref{defn:admissible_cr_pt} that there exists a neighbourhood $\Vman_z$ of $z$ and a unique $\Cinf$ function $\sigma_z:\Vman_z\to\RRR$ such that $\dif(x)=\HFlow(x,\sigma_z(x))$.
Then the functions $\{\sigma_{z}\}_{z\in\crcomp\cap\singf}$ yield a unique $\Cinf$ function $\sigma$ on the union $\mathop\cup\limits_{z\in\crcomp\cap\singf}\Vman_{z}$.
It remains to note that $\crcomp\setminus\singf$ is a disjoint union of open intervals, whence $\sigma$ uniquely extends to a $\Cinf$ function on $\regnbh$ such that $\dif(x)=\HFlow(x,\sigma(x))$, see\;\cite[Lm.\;6.4]{Maks:AGAG:2006} for details.
\end{proof}

Then the desired isotopy of $\dif$ to $\dif'$ in $\Stabf$ can be constructed similarly to\;\cite[Lm.\;4.14]{Maks:AGAG:2006}.
Take any $\Cinf$ function $\mu:\Mman\to[0,1]$ such that $\mu=0$ on some neighbourhood of $\overline{\Mman\setminus\Uman}$, $\mu=1$ on $\regnbh$, and $\mu$ is constant along orbits of $\AFld$.
Then the function $\nu=\mu\sigma$ is $\Cinf$ and well-defined on all of $\Mman$.
Consider the following homotopy
\begin{equation}\label{equ:isotopy_fix_on_M_U}
\gdif:\Mman\times I\to\Mman,
\qquad
\gdif_t(x) = \AFlow(x,t\nu(x)).
\end{equation}
Then $\gdif_0=\id_{\Mman}$, $\gdif_t$ is fixed on $\overline{\Mman\setminus\Uman}$, and $\gdif_1=\dif$ on $\regnbh$.
Since $\mu$ is constant along orbits of $\AFld$ and $\dif$ is a diffeomorphism, it follows from\;\cite[Lm.\;4.14]{Maks:AGAG:2006} that $\gdif$ is an isotopy.
Hence $\gdif_t^{-1}\circ\dif:\Mman\to\Mman$, $(t\in I)$, is an isotopy in $\Stabf$ supported in $\Uman$ and deforming $\dif$ to a desired diffeomorphism $\dif'=\gdif^{-1}_1\circ\dif$.
\end{proof}

\begin{proposition}\label{prop:deform-homotopies-in-Stabf}
Let $\Mmanneg\subset\Mman$ be a compact subsurface such that $\partial\Mmanneg$ consists of (regular) leaves of $\partf$.
Suppose $\dif\in\StabIdf$ is fixed on some neighbourhood $\Uman$ of $\Mmanneg$.
Then there exists an isotopy of $\dif$ to $\id_{\Mman}$ in $\Stabf$ fixed on some neighbourhood of $\Mmanneg$.
\end{proposition}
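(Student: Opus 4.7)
The plan is to start with any smooth isotopy $\dif_t\in\Stabf$ with $\dif_0=\id_{\Mman}$ and $\dif_1=\dif$ (such a $\dif_t$ exists since $\dif\in\StabIdf$) and modify it on a collar of $\partial\Mmanneg$ inside $\Uman$ using the flow along leaves of $\partf$, in the spirit of Proposition~\ref{prop:deform-in-Stabf}.

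First I would choose nested open neighbourhoods $\Mmanneg\subset\Vman_{1}\subset\overline{\Vman_{1}}\subset\Vman_{2}\subset\overline{\Vman_{2}}\subset\Uman$ with $\partial\Vman_{1}$ and $\partial\Vman_{2}$ each consisting of regular leaves of $\partf$, so that the collar $\overline{\Vman_{2}}\setminus\Vman_{1}$ is a disjoint union of closed annuli foliated by a smooth family of regular leaves. This is possible because $\partial\Mmanneg$ is a union of regular leaves and these are isolated in their level sets. A routine continuity argument---the set $\{t\in[0,1]:\dif_{t}(\gamma)=\gamma\}$ is both open and closed in $[0,1]$ and contains $0$ for every leaf $\gamma$ in the collar---shows that $\dif_{t}$ preserves each such leaf throughout the isotopy.

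Next, fixing a smooth nowhere-zero vector field $\HFld$ tangent to $\partf$ on a neighbourhood of the collar with local flow $\HFlow$, I represent the isotopy on the collar as $\dif_{t}(x)=\HFlow(x,\sigma_{t}(x))$ for a unique smooth family $\sigma_{t}$ of functions constant along leaves and normalised by $\sigma_{0}\equiv 0$. Because $\dif_{1}=\dif=\id$ on $\Uman$, each value $\sigma_{1}(x)$ is an integer multiple of the period of $\HFlow$ along the leaf through $x$, and smoothness makes this integer constant on each annular component. Then I take a smooth cutoff $\mu:\Mman\to[0,1]$ which is $\equiv 1$ on a neighbourhood of $\overline{\Vman_{1}}$, $\equiv 0$ outside $\Vman_{2}$, and constant along leaves on the collar, and set
\[
\alpha_{t}(x)\;:=\;
\begin{cases}
x, & x\in\overline{\Vman_{1}},\\
\HFlow\bigl(x,(1-\mu(x))\sigma_{t}(x)\bigr), & x\in\overline{\Vman_{2}}\setminus\Vman_{1},\\
\dif_{t}(x), & x\in\Mman\setminus\Vman_{2}.
\end{cases}
\]
The plateau property of $\mu$ makes $\alpha_{t}$ smooth at the two gluing circles, the leaf-constancy of $(1-\mu)\sigma_{t}$ places each $\alpha_{t}$ in $\Stabf$, and by inspection $\alpha_{0}=\id_{\Mman}$, $\alpha_{t}\equiv\id$ on $\Vman_{1}\supset\Mmanneg$, and $\alpha_{1}=\dif$ both on $\overline{\Vman_{1}}$ and on $\Mman\setminus\Vman_{2}$. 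Reversing the parameter produces the desired isotopy of $\dif$ to $\id_{\Mman}$ fixed on $\Vman_{1}$.

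The one delicate point I expect to have to address is verifying that $\alpha_{1}=\dif$ also on the collar, equivalently that $\HFlow(x,(1-\mu(x))\sigma_{1}(x))=x$ there; this is automatic when the locally constant winding integer of $\sigma_{1}$ on each annular component of the collar is zero. In the general case the path $\dif_{t}$ must first be modified, without changing its endpoints, by pre-concatenating an appropriate power of the $\pm$-twist isotopy from Section~\ref{sect:deform-incompr-surf} along each boundary circle of $\partial\Vman_{1}$, so as to reduce these winding integers to zero; after this preparation the cutoff construction above produces the required isotopy.
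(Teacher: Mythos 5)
Your construction on the collar is essentially the paper's proof in miniature: the paper also writes the given isotopy $\Gdif_t$ as a shift $\Gdif_t(x)=\HFlow(x,\Lambda_t(x))$ along a flow tangent to $\partf$ and damps the shift function with a cutoff that is constant along leaves (citing \cite[Th.~25]{Maks:TA:2003} for the existence of the continuous family $\Lambda_t$ with $\Lambda_0=0$). Two remarks on the body of your argument. First, your assertion that the $\sigma_t$ are ``constant along leaves'' is false in general -- $\dif_t$ restricted to a circle leaf is an arbitrary orientation\--preserving diffeomorphism of that circle, not a rotation -- but this is harmless: only the cutoff $\mu$ needs to be leaf\--constant, both for $\alpha_t$ to preserve $\func$ (the flow already preserves $\func$) and for $\alpha_t$ to be a diffeomorphism (cf.\ \cite[Lm.~4.14]{Maks:AGAG:2006}). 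Second, the open\--closed argument showing $\dif_t(\gamma)=\gamma$ for each regular leaf in the collar is fine.

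The genuine gap is exactly where you flag it, and your proposed repair does not work. The winding integer $n$ on a collar annulus is the degree of the loop $s\mapsto\dif_s(x_0)$ in the leaf through $x_0$; to change it without moving the endpoints of the path $\dif_t$ you would have to compose with a \emph{loop} at $\id_{\Mman}$ in $\Stabf$ having nonzero winding on that annulus. The $\pm$-twist isotopy of \S\ref{sect:deform-incompr-surf} is a path from $\id_{\Mman}$ to $g_{\gamma}^{m}\neq\id_{\Mman}$, not a loop, so pre\--concatenating it replaces the endpoint $\dif$ by $g_{\gamma}^{m}\circ\dif$; and no loop supported near the annulus with nonzero winding exists, since cutting off the full rotation $s\mapsto\HFlow(\cdot,s\theta)$ destroys the property that its time\--one map is the identity. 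The correct resolution -- and the reason the paper's version goes through -- is that the winding integers vanish automatically, but one only sees this by extending the shift function beyond the collar: the paper's $\Lambda_1$ is defined on all of $\Mman\setminus\singf$, it must equal $0$ on the non\--closed (critical) leaves inside $\Mmanneg$ where $\dif=\id$ (the shift function of the identity is unique there), and since the periods $\theta$ of the nearby closed leaves tend to infinity while $\Lambda_1$ stays continuous, the locally constant integer $\Lambda_1/\theta$ is forced to be $0$ on every regular annulus of $\Uman$ adjacent to a critical component. Your strictly collar\--local setup has no access to this global input, so as written the proof is incomplete precisely at its last step.
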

\begin{proof}
Again we will consider only the case when $\Mman$ is orientable.
Let $\HFlow:\Mman\times\RRR\to\Mman$ be the flow constructed in the proof of Proposition\;\ref{prop:deform-in-Stabf}.
Since $\dif\in\StabIdf$, there exists an isotopy $\Gdif:\Mman\times I\to\Mman$ of $\id_{\Mman}$ to $\dif$ in $\Stabf$.
Then is it easy to show that each $\Gdif_t$ preserves orbits of $\HFlow$ on some neighbourhood of $\Mmanneg$, see\;\cite[Lm.\;3.4]{Maks:AGAG:2006}.
Now it follows from\;\cite[Th.\;25]{Maks:TA:2003}, see also\;\cite{Maks:ImSh}, that there exists a continuous function $\Lambda:(\Mman\setminus\singf)\times I\to\RRR$ such that $\Lambda_{t}$ is $\Cinf$ for each $t\in I$, $\Lambda_0=0$, and $\Gdif_t(x)=\HFlow(x,\Lambda_t(x))$ for all $x\in\Mman\setminus\singf$.
Let $\mu:\Mman\to[0,1]$ be a $\Cinf$ function constant along orbits of $\HFld$, $\mu=0$ on $\Mmanneg$, and $\mu=1$ on some neighbourhood of $\overline{\Mman\setminus\Uman}$.
Define the following map $a:\Mman\times I\to\Mman$ by
$$
a(x,t)=
\begin{cases}
\HFlow(x,\mu(x)\Lambda(x,t)), & x\in\Uman \\
\Gdif_t(x), & x\in\Mman\setminus\Uman.
\end{cases}
$$
We claim that $a$ is an isotopy between $\id_{\Mman}$ and $\dif$ in $\Stabf$ fixed on some neighbourhood of $\Mmanneg$.

Since $\mu=1$ on some neighbourhood of $\overline{\Mman\setminus\Uman}$, we see that $a$ is continuous and $a_t$ is $\Cinf$ for each $t$.
Moreover,
$$
a(x,0)=
\begin{cases}
\HFlow(x,0)=x, & x\in\Uman \\
\Gdif_0(x)=x, & x\in\Mman\setminus\Uman.
\end{cases}
$$
Since $\dif$ is fixed on $\Uman$, it follows that $\Lambda(x,1)=0$ on $\Uman$.
Therefore $\mu\Lambda_1=\Lambda_1$ and $a_1=\dif$.
As $\mu=0$ on $\Mmanneg$, we obtain that $a_t$, $(t\in I)$, is fixed on $\Mmanneg$.
\end{proof}

\section{Proof of Theorem~\ref{th:stab1-repr}}\label{sect:proof_th:stab1-repr}
Suppose $\chi(\Mman)<0$ and that $\func:\Mman\to\Psp$ satisfies \AxBd, \AxIsol, and \AxAdm.
We have to find a compact subsurface $\Mmanneg\subset\Mman$ satisfying conditions (1)-(3) of Theorem~\ref{th:stab1-repr}.

\subsection*{Construction of $\Mmanneg$}
Let $\crcomp_1,\ldots,\crcomp_k$ be all the critical components of level-sets of $\func$ whose canonical neighbourhoods $\cannbha{\crcomp_i}$ have negative Euler characteristic: $\chi(\cannbha{\crcomp_i})<0$.
Since $\chi(\Mman)<0$, we have by Lemma\;\ref{lm:crcomp-2disk} that this collection is non-empty.
Denote 
$$
\Kcrcomp=\mathop\cup\limits_{i=1}^{k}\crcomp_i,
$$
For each $i=1,\ldots,k$ choose an atom $\regnbha{i}$ for $\crcomp_i$ in a way described in\;\S\ref{sect:atoms}, and let $\cannbha{i}$ be the corresponding canonical neighbourhood of $\crcomp_i$.
Then we can assume that conditions (i) and (ii) of Lemma\;\ref{lm:properties_of_atoms} hold.
In particular, $\regnbha{i}\cap\regnbha{j}=\cannbha{i}\cap\cannbha{j}=\varnothing$ for $i\not=j$.

Denote $\regnbhNDn := \mathop\cup\limits_{i=1}^{k}\regnbha{i}$.
Let also $B_1,\ldots,B_q$ be all the connected components of $\overline{\Mman\setminus\regnbhNDn}$ such that every $B_i$ is  a cylinder and $\func$ has no critical points in $B_i$.
Put $$\Mmanneg=\regnbhNDn \cup B_1 \cup \cdots \cup B_q.$$
We will show that $\Mmanneg$ satisfies the statement of Theorem~\ref{th:stab1-repr}.

\begin{example}\rm
Let $\Mman$ be an orientable surface of genus $2$ embedded in $\RRR^3$ in a way shown in Figure~\ref{fig:surfX}a) and $\func:\Mman\to\RRR$ be the projection to the vertical line.
Critical components of level-sets of $\func$ whose canonical neighbourhoods have negative Euler characteristic are denoted by $K_1$ and $K_2$.
The corresponding surface $\Mmanneg$ is shown in Figure~\ref{fig:surfX}b).
\begin{figure}[ht]
\begin{tabular}{ccc}
\includegraphics[width=0.2\textwidth]{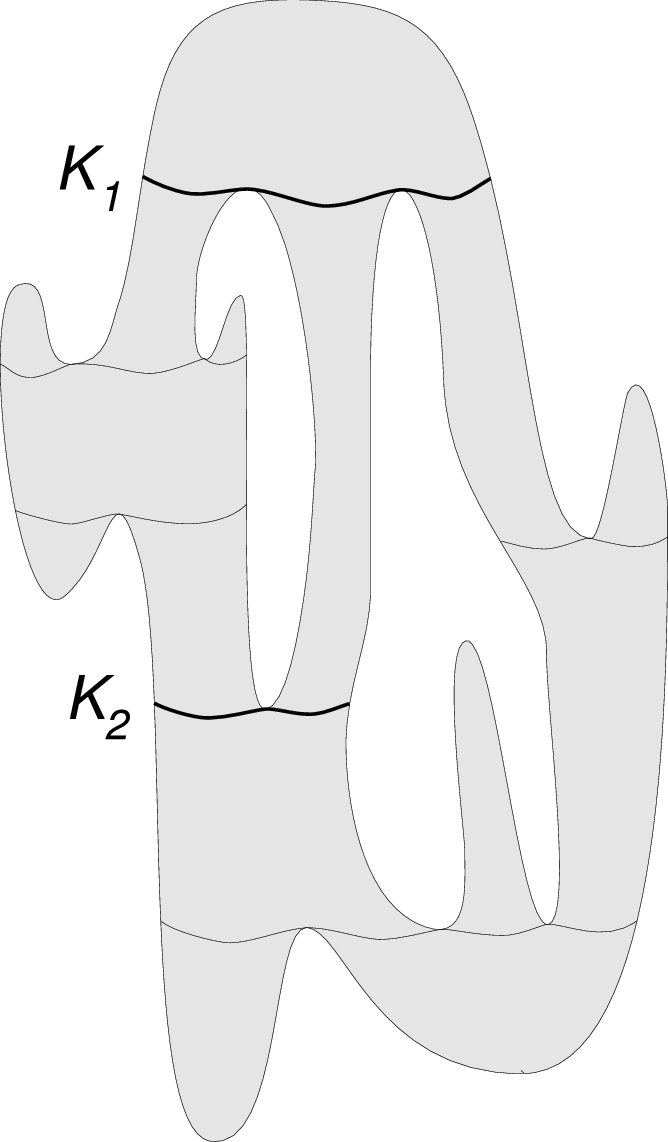}
& \qquad\qquad\qquad &
\includegraphics[width=0.2\textwidth]{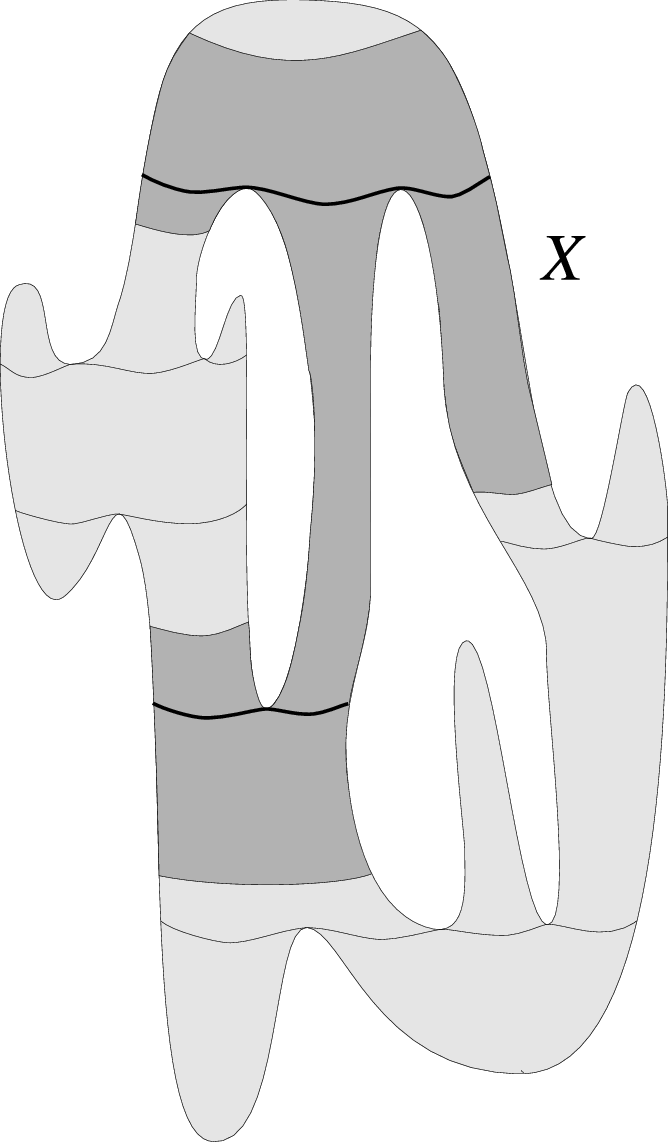}
\\
a) & & b) 
\end{tabular}
\caption{}\protect\label{fig:surfX}
\end{figure}
\end{example}

Before proving Theorem\;\ref{th:stab1-repr} we establish the following statement.

\begin{claim}\label{clm:h_StabAf_pres_om_in_partitf}
{\rm(i)}~Let $\dif\in\StabAf$.
Then $\dif$ preserves every leaf $\omega\subset\regnbhNDn$ of $\partf$ and its orientation.

{\rm(ii)}~Suppose $\dif$ is fixed on a neighbourhood of $\regnbhNDn$.
Then for every connected component $B$ of $\overline{\Mman\setminus\regnbhNDn}$ the restriction $\dif|_{B}$ is isotopic to $\id_{B}$ with respect to a neighbourhood of $\partial B\cap\regnbhNDn$.
\end{claim}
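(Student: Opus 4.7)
The plan for~(i) is to show successively that $\dif$ preserves each piece of the combinatorial structure building up $\regnbhNDn$, and then to read off leaf preservation. Since $\dif$ preserves $\func$, it permutes the critical components of $\partf$, and by Lemma~\ref{lm:properties_of_atoms}(ii) it permutes the associated canonical neighbourhoods accordingly; the subcollection $\{\cannbha{i}\}_{i=1}^{k}$ is distinguished by negativity of $\chi$, a diffeomorphism invariant, so it is permuted as well. Each $\cannbha{i}$ is incompressible in $\Mman$ and, having $\chi<0$, is not a cylinder; since $\dif$ is isotopic (hence homotopic) to $\id_{\Mman}$, Proposition~\ref{pr:incompr-surf-homotopy}(A) gives $\dif(\cannbha{i})\cap\cannbha{i}\neq\varnothing$, which together with the permutation forces $\dif(\cannbha{i})=\cannbha{i}$. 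Because $\regnbha{i}$ and $\crcomp_i$ are cut out inside $\cannbha{i}$ by preimages of $\Wman$ and of a critical value of $\func$, $\func$-invariance of $\dif$ then yields $\dif(\regnbha{i})=\regnbha{i}$ and $\dif(\crcomp_i)=\crcomp_i$.

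Viewing $\crcomp_i$ as a connected subgraph $\kgraph_i\subset\Mman$, the equality $\dif(\kgraph_i)=\kgraph_i$ makes $\dif$ act as a graph automorphism of $\kgraph_i$ and hence preserves the (possibly empty) set of degree-$2$ vertices automatically. Applying Theorem~\ref{th:aut-surf-graph}(1) with $\cannbhg=\cannbha{i}$, valid since $\chi(\cannbha{i})<0$, yields that $\dif|_{\cannbha{i}}$ is $\cwpartg$-trivial, so every $0$-, $1$- and $2$-cell of $\cwpartg$ is invariant with positive orientation. This directly handles the critical leaves of $\crcomp_i$. A regular leaf $\omega\subset\regnbha{i}\setminus\crcomp_i$ lies in a unique sector $s=e\cap\regnbha{i}$ for some $2$-cell $e\in\cwpartg$; the restriction $\func|_s$ is a submersion whose connected level sets are precisely the leaves of $\partf$ lying in $s$. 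Combining $\dif(s)=s$ with $\func\circ\dif=\func$ gives $\dif(\omega)=\omega$, and the orientation of $\omega$ is determined by the orientation of the orientable cell $s$ (preserved by cellular triviality) together with the coorientation by increasing $\func$ (preserved by $\func$-invariance of $\dif$).

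For~(ii) I would split the components $B$ of $\overline{\Mman\setminus\regnbhNDn}$ into type~(A), the $2$-disks arising in $\cannbhNDn\setminus\regnbhNDn$ from the passage from atoms to canonical neighbourhoods, and type~(B), the components of $\overline{\Mman\setminus\cannbhNDn}$, which by Corollary~\ref{cor:concomp-M-Rl0} together with incompressibility of $\cannbhNDn$ are cylinders or M\"obius bands. For type~(A), $\partial B\subset\regnbhNDn$ and $\dif|_B$ is already the identity on a collar of $\partial B$, so Smale's theorem supplies the required isotopy. For type~(B), first use Smale's theorem inside each disk of $\cannbhNDn\setminus\regnbhNDn$ to modify $\dif$, by an isotopy supported in those disks, into a diffeomorphism $\dif'$ equal to the identity on a neighbourhood of $\cannbhNDn$ and agreeing with $\dif$ on every type-(B) component. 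Since each $\cannbha{i}$ is incompressible with $\chi<0$, parts~(C) and~(D) of Proposition~\ref{pr:incompr-surf-homotopy} produce a homotopy of $\id_{\Mman}$ to $\dif'$ that is fixed on $\cannbhNDn$ and preserves each $B$ setwise. Restricting to $B$ yields a homotopy of $\id_B$ to $\dif|_B$ relative to $\partial B\cap\regnbhNDn$, which upgrades to an isotopy by the classical homotopy-implies-isotopy results for disks, cylinders and M\"obius bands; a routine smoothing arranges the isotopy to be the identity on a whole neighbourhood of $\partial B\cap\regnbhNDn$.

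The main obstacle is the orientation bookkeeping in~(i): Theorem~\ref{th:aut-surf-graph} provides orientations only for the finitely many cells of $\cwpartg$, so promoting this to preservation of the orientation of every individual regular leaf requires pairing the cell orientation with the coorientation coming from $\func$, and verifying that the argument still works when $\Mman$, and hence some $\cannbha{i}$, is non-orientable so that no global ambient orientation is available.
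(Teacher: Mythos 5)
Your proposal is correct and follows essentially the same route as the paper: for (i) the chain Lemma~\ref{lm:properties_of_atoms} $\to$ Proposition~\ref{pr:incompr-surf-homotopy}(A) $\to$ Theorem~\ref{th:aut-surf-graph} $\to$ recovery of regular leaves as intersections of invariant cells with $\func$-preimages, and for (ii) the classification of complementary components plus classical isotopy results for disks, M\"obius bands and boundary-adjacent cylinders, with Proposition~\ref{pr:incompr-surf-homotopy}(D) ruling out Dehn twists on the remaining cylinders. The only points where you are looser than the paper are cosmetic: the preservation of degree-$2$ vertices is not ``automatic'' from $\dif(\kgraph_i)=\kgraph_i$ as a topological equality but follows because $\dif$, being a diffeomorphism in $\Stabf$, preserves the set of critical points of $\func$; and the orientation of regular leaves is handled in the paper simply by continuity from the $\dif^{+}$-invariance of the adjacent critical leaves, which sidesteps your worry about non-orientable canonical neighbourhoods.
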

\begin{proof}
(i).~It follows from the definition of $\Kcrcomp$ that $\dif(\Kcrcomp)=\Kcrcomp$.
We claim that in fact $\dif(\crcomp_{i})=\crcomp_{i}$ for all $i=1,\ldots,k$.

Indeed, suppose that $\dif(\crcomp_i)=\crcomp_j$ for some $j$.
Then by Lemma\;\ref{lm:properties_of_atoms} $\dif(\regnbha{i})=\regnbha{j}$ and $\dif(\cannbha{i})=\cannbha{j}$.
On the other hand, since $\cannbha{i}$ is incompressible, $\chi(\cannbha{i})<0$, and $\dif$ is isotopic to $\id_{\Mman}$, it follows from (1) of Proposition\;\ref{pr:incompr-surf-homotopy} that $\dif(\cannbha{i})\cap\cannbha{i}\not=\varnothing$.
But $\cannbha{i}\cap\cannbha{j}=\varnothing$ for $i\not=j$.
Hence $\dif(\cannbha{i})=\cannbha{i}$ for each $i=1,\ldots,k$.

Denote by $\cwpart_{i}$ the corresponding partition of $\cannbha{i}$, see\;\S\ref{sect:cell-auto}.
Since $\dif$ preserves the set of critical points of $\func$, it follows that $\dif$ preserves the set of vertexes of degree $2$ of $\crcomp_i$.
This implies that the restriction of $\dif$ to $\cannbha{i}$ yields a certain automorphism $\dif^{*}$ of the partition $\cwpart_i$.
As $\chi(\cannbha{i})<0$ and $\dif$ is isotopic to $\id_{\Mman}$, we get from Theorem\;\ref{th:aut-surf-graph} that $\dif$ yields a trivial automorphism of $\cwpart_i$.
In particular, each (critical) leaf $\omega$ of $\partf$ in $\crcomp_i$ is $\dif^{+}$-invariant.

Let $\omega\subset\regnbha{i}$ be a regular leaf of $\partf$ and $e\subset\cannbha{i}$ be the corresponding element of $\cwpart_i$ containing $\omega$, so $e$ is either an open $2$-disk or a half-open cylinder $S^1\times(0,1]$.
Then $$\omega\ \ = \ \ e \ \ \cap \ \ \func^{-1}\circ\func(\omega).$$
Notice that $\dif(e)=e$, since $\dif$ is $\cwpart_i$-trivial.
Moreover, $\func\circ\dif=\func$ implies that $\dif\circ\func^{-1}\circ\func(\omega)=\func^{-1}\circ\func(\omega)$, whence $\dif(\omega)=\omega$.
It remains to note that $\dif$ preserves orientation of $\omega$ since it preserves orientation of leaves in $\crcomp_i$.

\medskip 

(ii)~Let $B$ be a connected component of $\overline{\Mman\setminus\regnbhNDn}$.
Then it follows from Corollary~\ref{cor:concomp-M-Rl0} that $B$ is either
\begin{itemize}
\item[\rm(a)] a $2$-disk, or
\item[\rm(b)] a M\"obius band, or
\item[\rm(c)] a cylinder such that one of its boundary components belongs to $\regnbhNDn$ and another one to $\partial\Mman$, or
\item[\rm(d)] a cylinder with $\partial B\subset\regnbhNDn$.
\end{itemize}
If $B$ is of type (a)-(c), then it is well-known that $\dif$ is isotopic to $\id_{B}$ with respect to a neighbourhood of $\partial B\cap\regnbhNDn$.
See~\cite{Alexander:PNAS:1923, Smale:ProcAMS:1959} for the $2$-disk, and\;\cite{Epstein:AM:1966} for the cases (b) and (c).

Let $Q$ be the union of $\regnbhNDn$ with all the components of types (a)-(c).
Then we can assume that $\dif$ is fixed on $Q$.

It also follows that $Q$ is incompressible and every connected component $Q'$ of $Q$ contains some $\cannbha{j}$.
This implies that $\chi(Q')\leq \chi(\cannbha{j})<0$.
Then by (D) of Proposition~\ref{pr:incompr-surf-homotopy} $\dif$ is homotopic to $\id_{\Mman}$ via a homotopy fixed on $Q$.
In particular, the restriction of $\dif$ to every connected component $B$ of type (d) is homotopic in $B$ to $\id_{B}$ relatively $\partial B$.
\end{proof}

Now we can complete Theorem\;\ref{th:stab1-repr}.

(1) It follows from the definition of $\regnbhNDn$ that $\partial\Mmanneg$ consists of some regular leaves of $\partf$, whence $\func$ is locally constant of $\partial\Mmanneg$.
Moreover by Corollary~\ref{cor:concomp-M-Rl0} every connected component $B$ of $\overline{\Mman\setminus\regnbhNDn}$ and therefore of $\overline{\Mman\setminus\Mmanneg}$ is either a $2$-disk, or a cylinder, or a M\"obius band.

It is also easy to see that $B$ contains critical points of $\func$.
Indeed, suppose $B$ is either a $2$-disk or a M\"obius band.
Since $\func$ is constant on $\partial B$, it follows that $\func|_{B}$ is null-homotopic.
Hence $\func$ must have local extremes in $\Int B$.

On the other hand, if $B$ is a cylinder containing no critical points of $\func$, then by the construction of $\Mmanneg$ we should have that $B\subset\Mmanneg$ which is impossible.

\medskip 

Statement (2) is a particular case of (ii) of Claim\;\ref{clm:h_StabAf_pres_om_in_partitf}.

\medskip 

(3) We have to show that the inclusion $i:\StabAfneg\subset \StabAf$ yields a bijection $i_0:\pi_0\StabAfneg\approx\pi_0\StabAf$.

\begin{claim}
The map $i_{0}:\pi_0 \StabAfneg \to \pi_0 \StabAf$ is an epimorphism.
\end{claim}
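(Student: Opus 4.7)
The plan is to connect any $\dif \in \StabAf$ by a path in $\StabAf$ to a diffeomorphism in $\StabAfneg$, using the decomposition $\Mmanneg = \regnbhNDn \cup B_1 \cup \cdots \cup B_q$ and carrying out the deformation in two stages corresponding to the two kinds of pieces making up $\Mmanneg$.

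Stage 1 (near the critical components). Because $\chi(\cannbha{\crcomp_i})<0$, each $\crcomp_i$ cannot consist of a single local extremum, so every point of $\crcomp_i\cap\singf$ is a saddle and is admissible by \AxAdm. Claim~\ref{clm:h_StabAf_pres_om_in_partitf}(i) says that $\dif$ preserves each leaf of $\partf$ in $\crcomp_i$ together with its orientation, which is exactly the hypothesis of Proposition~\ref{prop:deform-in-Stabf}. Choosing pairwise disjoint open neighbourhoods $\Uman_i$ of the atoms $\regnbha{i}$ and applying that proposition once for each $i$, I deform $\dif$ through $\Stabf$ to a diffeomorphism $\dif_1$ which equals the identity on some open neighbourhood $V$ of $\regnbhNDn$ and coincides with $\dif$ off $\bigcup_i \Uman_i$. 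Since the isotopy is a path in $\DiffM$ starting at $\dif\in\DiffIdM$, its endpoint satisfies $\dif_1\in\StabAf$.

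Stage 2 (on the cylinder pieces $B_j$). Fix one such $B_j$; since $\func$ has no critical points on $B_j$, the restriction $\func|_{B_j}$ is a locally trivial $S^1$-bundle over an arc, so I may trivialise $B_j\cong S^1\times[0,1]$ so that $\func$ depends only on $t$. Then $\dif_1|_{B_j}(x,t) = (\alpha_t(x),\,t)$ for some smooth path $\alpha_t\in\DiffSplus$ which equals the identity on a neighbourhood of each $t\in\{0,1\}$ whose corresponding boundary circle of $B_j$ lies in $V$. By Claim~\ref{clm:h_StabAf_pres_om_in_partitf}(ii), $\dif_1|_{B_j}$ is isotopic in $B_j$ to $\id_{B_j}$ relative to a neighbourhood of $\partial B_j\cap\regnbhNDn$; translated back through the trivialisation this produces a smooth homotopy $\alpha_t^s$ with $\alpha_t^0=\alpha_t$, $\alpha_t^1=\id$, and $\alpha_t^s=\id$ on the relevant collar for every $s$. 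Defining $\Phi_j^s(x,t) = (\alpha_t^s(x),\,t)$ on $B_j$ and $\Phi_j^s=\id$ elsewhere gives an isotopy in $\Stabf$ that deforms $\dif_1$ to a diffeomorphism equal to the identity on $V\cup B_j$. Iterating over $j=1,\dots,q$ yields the required $\dif'\in\StabAfneg$ connected to $\dif$ by a path in $\StabAf$.

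The main technical obstacle is the upgrading step in Stage 2: converting the purely topological relative isotopy supplied by Claim~\ref{clm:h_StabAf_pres_om_in_partitf}(ii) into an isotopy through $\func$-preserving diffeomorphisms. Once $B_j$ is trivialised, $\func$-preserving diffeomorphisms that are identity near the $\regnbhNDn$-boundary correspond to paths in $\DiffSplus$ based at the identity with prescribed collar behaviour; when both boundary circles of $B_j$ lie in $\regnbhNDn$ the path is actually a loop, and the topological relative isotopy forces its winding number in $\pi_1(\DiffSplus,\id)\cong\ZZZ$ (the Dehn-twist count of $\dif_1|_{B_j}$) to vanish, which is exactly what permits the required null-homotopy.
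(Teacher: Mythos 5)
Your proof is correct and follows essentially the same route as the paper: Stage~1 is exactly the paper's application of Proposition~\ref{prop:deform-in-Stabf} via Claim~\ref{clm:h_StabAf_pres_om_in_partitf}(i), and Stage~2 performs on each cylinder $B_j$ the same straightening that the paper obtains from the Hamiltonian flow and a shift function $\afunc$ with $\dif(x)=\HFlow(x,\afunc(x))$ --- your vanishing winding number in $\pi_1(\DiffSplus)\cong\ZZZ$ is precisely the condition that makes that shift function single-valued, so the two implementations of the cylinder step are equivalent. One small correction: off $B_j$ you must set $\Phi_j^s=\dif_1$ rather than $\id$ (otherwise $\Phi_j^0\neq\dif_1$ and the isotopy does not start at $\dif_1$); the glued map is still smooth because both pieces equal the identity on a neighbourhood of $\partial B_j\cap\regnbhNDn$.
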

\begin{proof}
Let $\dif\in\StabAf$.
We have to show that $\dif$ is isotopic in $\StabAf$ to a diffeomorphism fixed on $\Mmanneg$.

By (i) of Claim\;\ref{clm:h_StabAf_pres_om_in_partitf} $\dif$ preserves the foliation of $\partf$ on $\regnbhNDn$.
Hence by Proposition\;\ref{prop:deform-in-Stabf} applied to each critical component $\crcomp_i$, $(i=1,\ldots,k)$, $\dif$ is isotopic in $\StabAf$ to a diffeomorphism fixed on some neighbourhood of $\regnbhNDn$, so we can assume that $\dif$ itself is fixed near $\regnbhNDn$.

Let $B_i$, $(i=1,\ldots,q)$, be a connected component of $\overline{\Mmanneg\setminus\regnbhNDn}$.
By the construction $B_i$ is a cylinder being a union of regular leaves of $\partf$ and containing no critical points of $\func$.
Choose an orientation for $B_i$.
Then we can define a Hamiltonian flow $\HFlow:B_i\times\RRR\to B_i$ of $\func$ on $B_i$ whose orbits are leaves $\partf$ belonging to $B_i$.
Notice that $\dif$ is fixed on some neighbourhood of $\partial B_i\cap\regnbhNDn$ and by (ii) the restriction of $\dif$ to $B$ is homotopic to $\id_{B_i}$ relatively $\partial B_i$.
Then by\;\cite[Lm.\;4.12]{Maks:AGAG:2006} there exists a $\Cinf$ function $\afunc:B_i\to\RRR$ such that $\afunc=0$ on some neighbourhood of $\partial B_i\cap\regnbhNDn$ and $\dif(x)=\HFlow(x,\afunc(x))$ for all $x\in B_i$.

Notice that $\partial B_i\cap\regnbhNDn$ separates $\Mman$.
Then the map
\begin{equation}\label{equ:isotopy_h_to_fix_on_X}
a:\Mman\times I\to\Mman,\qquad
a(x,t) = 
\begin{cases}
\HFld(x,t\afunc(x)), & x \in B_i, \\
\dif(x), & x \in \Mman\setminus B_i
\end{cases}
\end{equation}
is an isotopy of $\dif$ in $\Stabf$ to a diffeomorphism fixed on $B_i$.
Applying this to each $B_i$ we will made $\dif$ fixed on all of $\Mmanneg$.
\end{proof}

\begin{claim}
$i_{0}:\pi_0 \StabAfneg \to \pi_0 \StabAf$ is a monomorphism.
\end{claim}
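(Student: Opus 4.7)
The plan is to reduce the injectivity of $i_0$ directly to Proposition~\ref{prop:deform-homotopies-in-Stabf}. Suppose $\dif\in\StabAfneg$ represents the trivial class in $\pi_0\StabAf$, i.e.\ there is a continuous path in $\StabAf$ from $\id_{\Mman}$ to $\dif$. I need to produce such a path entirely inside $\StabAfneg$, meaning one whose terms are all fixed on a (common) neighbourhood of $\Mmanneg$.

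First I would observe that a path in $\StabAf$ is in particular a path in $\Stabf$, so $\dif\in\StabIdf$. Next, the surface $\Mmanneg$ built in the construction above meets the hypotheses of Proposition~\ref{prop:deform-homotopies-in-Stabf}: by part~(1) of Theorem~\ref{th:stab1-repr} the boundary $\partial\Mmanneg$ is a union of regular leaves of $\partf$; and by definition of $\StabAfneg$ the diffeomorphism $\dif$ is the identity on some neighbourhood $\Uman$ of $\Mmanneg$. Applying Proposition~\ref{prop:deform-homotopies-in-Stabf}, I obtain an isotopy $a_t\in\Stabf$, $t\in I$, from $a_0=\id_{\Mman}$ to $a_1=\dif$, with the additional property that each $a_t$ is fixed on some neighbourhood $\Uman'\subset\Uman$ of $\Mmanneg$ that does not depend on $t$.

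The remaining point is to check that this isotopy in fact lies in $\StabAfneg$, not merely in $\Stabf$. Since $\StabAfneg\subset\DiffIdM$ and $\DiffIdM$ is a path component of the topological group $\DiffM$, any continuous path in $\DiffM$ issuing from a point of $\DiffIdM$ stays in $\DiffIdM$. Applied to $\{a_t\}$ (say, viewed as starting at $\dif\in\DiffIdM$ and ending at $\id$, by reparametrisation) this forces $a_t\in\DiffIdM$ for every $t$, hence $a_t\in\Stabf\cap\DiffIdM=\StabAf$. Combined with the condition that each $a_t$ is fixed on $\Uman'$, we conclude $a_t\in\StabAfneg$ for all $t\in I$, so $[\dif]=0$ in $\pi_0\StabAfneg$ and $i_0$ is injective.

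There is no real obstacle here: the entire work has already been absorbed into Proposition~\ref{prop:deform-homotopies-in-Stabf} (which in turn relies on the Hamiltonian-flow lifting machinery of \cite{Maks:TA:2003,Maks:AGAG:2006}), and the only subtlety is the bookkeeping argument that a path in $\Stabf$ starting in $\DiffIdM$ must remain in $\DiffIdM$, which is immediate from $\DiffIdM$ being a path component. This completes the proof that $i_0$ is a bijection, and hence Theorem~\ref{th:stab1-repr}.
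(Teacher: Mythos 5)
Your argument is correct and follows essentially the same route as the paper: both reduce injectivity to showing that an element of $\StabAfneg$ lying in the identity component of $\StabAf$ (equivalently of $\Stabf$, since $\StabAf\cap\StabIdf=\StabIdf$) already lies in the identity component of $\StabAfneg$, and both obtain this by invoking Proposition~\ref{prop:deform-homotopies-in-Stabf} to replace the given isotopy by one fixed near $\Mmanneg$. Your closing remark that the modified isotopy stays in $\DiffIdM$ is a harmless piece of bookkeeping the paper leaves implicit.
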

\begin{proof}
Let $\StabIdAf$ and $\StabIdAfneg$ be the identity path components of $\StabAf$ and $\StabAfneg$ respectively.
It is clear that $\StabIdAf = \StabIdf$.
Hence an injectivity of $i_0$ means that 
$$\StabIdAfneg = \StabAfneg \cap \StabIdAf = \StabAfneg \cap \StabIdf.$$

Evidently, $\StabIdAfneg \subset \StabAfneg \cap \StabIdf$.

Conversely, let $\dif\in\StabAfneg \cap \StabIdf$, so $\dif$ is fixed on some neighbourhood of $\Mmanneg$ and there exists an isotopy $g_t:\Mman\to\Mman$ in $\Stabf$ between $\dif_0=\id_{\Mman}$ and $\dif_1=\dif$.
Then by Proposition\;\ref{prop:deform-homotopies-in-Stabf} this isotopy can be made fixed on some neighbourhood of $\Mmanneg$.
Hence $\dif\in\StabIdAfneg$.
\end{proof}

\def\cprime{$'$}
\providecommand{\bysame}{\leavevmode\hbox to3em{\hrulefill}\thinspace}
\providecommand{\MR}{\relax\ifhmode\unskip\space\fi MR }
\providecommand{\MRhref}[2]{%
  \href{http://www.ams.org/mathscinet-getitem?mr=#1}{#2}
}
\providecommand{\href}[2]{#2}

\end{document}